\definecolor{refkey}{rgb}{1,0,0.5}
\definecolor{labelkey}{rgb}{0,0.4,1}
\renewcommand{\@todonotes@drawMarginNoteWithLine}{%
	\begin{tikzpicture}[remember picture, overlay, baseline=-0.75ex]%
		\node [coordinate] (inText) {};%
	\end{tikzpicture}%
	\marginnote[{
		\@todonotes@drawMarginNote%
		\@todonotes@drawLineToLeftMargin%
	}]{
		\@todonotes@drawMarginNote%
		\@todonotes@drawLineToRightMargin%
	}%
}
\numberwithin{equation}{section}
\newtheorem{thm}{Theorem}[section]
\newtheorem{lem}[thm]{Lemma}
\newtheorem{prop}[thm]{Proposition}
\newtheorem{rmk}[thm]{Remark}
\newcommand{\pl}{\partial}
\newcommand{\be}{\begin{equation}}
	\newcommand{\ee}{\end{equation}}
\newcommand{\bee}{\begin{equation*}}
	\newcommand{\eee}{\end{equation*}}
\newcommand{\bse}{\begin{subequations}}
	\newcommand{\ese}{\end{subequations}}
\newcommand{\bs}{\begin{split}}
	\newcommand{\es}{\end{split}}
\begin{document}
	\author{Yingzhi Du$^{1}$}\thanks{$^{1}$Department of Mathematics, City University of Hong Kong, 83 Tat Chee Avenue, Kowloon Tong, Hong Kong.
		E-mail: yingzhidu2-c@my.cityu.edu.hk}

	\author{Hairong Liu$^{2}$}\thanks{$^{2}$ School of Mathematics and Statistics, Nanjing University of Science and Technology, Nanjing, 210094, P.R China
		E-mail: hrliu@njust.edu.cn}

	\title[] {Stability of steady states of the 3-D Navier-Stokes-Poisson equations with non-flat doping profile  in  exterior domains}

	\begin{abstract}
		This paper concerns an initial boundary value problem of compressible Navier-Stokes-Poisson equations with the non-flat doping profile in a 3-D exterior domain.
		The global existence of strong solutions near a steady state for compressible Navier-Stokes-Poisson equations with the general Navier-slip boundary conditions is established.  For our setting, not only a steady state should be constructed in the exterior domain  by the sub and super solution method, but also some new techniques should be adopted to obtain a priori estimates, especially some refined elliptic estimates and the estimates on the boundary.

		\noindent {\bf Keywords}: Navier-Stokes-Poisson systems, Navier-slip boundary condition, Non-flat doping profile, Exterior domain\\
		{\bf AMS Subject Classifications.} 35Q35, 35B40\end{abstract}
	
	\maketitle

	\section{Introduction}
	It is well-known that the compressible Navier-Stokes-Poisson (NSP) system consists of the Navier-Stokes equations coupled with the self-consistent Poisson equations, which is used in the simulation of the motion of charged particles (electrons or holes, see \cite{[23]} for more explanations). In three-dimensional space, the NSP system of one carrier type takes the following form
	\begin{equation}\label{origin}
		\left\{
		\begin{array}{lll}
			\rho_{t}+\mbox{div}(\rho u)=0,\\[2mm]
			\rho\left(u_{t}+(u\cdot\nabla)u\right)-\mu\Delta u-(\mu+\lambda)\nabla\mbox{div}u+\nabla p=\rho\nabla\Phi,\\[2mm]
			\Delta\Phi=\rho-b(x),
		\end{array}
		\right.
	\end{equation}
	where  $\rho>0$, $u=(u^1,u^2,u^3)$ and $p$ denote the density, the velocity field of charged particles, the pressure, respectively.
	The self-consistent electric potential $\Phi=\Phi(x,t)$ is coupled with the density through the Poisson equation.
	The pressure $p$ is expressed by
	\begin{equation}{\label{pressure}}
		p(\rho)=\rho^{\gamma},
	\end{equation}
	where $\gamma\geq1$ is a constant. As usual, the constant viscosity coefficients $\mu$ and $\lambda$ should satisfy the following physical conditions
	\begin{equation*}
		\mu>0,\,\,\,\lambda+\frac{2}{3}\mu\geq0.
	\end{equation*}
	$b=b(x)>0$ is the background profile, the sum of the background ion density, and the net density of impurities, which is assumed to be given.
	
	The object of this paper is to investigate the global existence  of the solutions to the initial boundary value (IBV) problem  of (\ref{origin}) in $(t,x)\in [0,+\infty)\times\Omega$ , where $\Omega\subset\mathbb{R}^3$ is
	a smooth exterior domain,  with the following initial and boundary conditions
	\begin{equation}{\label{initial}}
		(\rho, u)(x,t=0)=(\rho_0,u_0)(x),
	\end{equation}
	\begin{equation}\label{boundary1}
		u\cdot n=0,\quad 2(S(u)\cdot n)_{\tau}=-\alpha u_{\tau},\quad \mbox{on}\quad \partial\Omega,
	\end{equation}
and \begin{equation}\label{boundary3}
		\nabla\Phi\cdot n\mid_{\partial\Omega}=0,
	\end{equation}
	where $n$ stands for the outward unit normal to $\partial\Omega$.
	\eqref{boundary1} is the general Navier-slip boundary condition for the velocity field in which $2S(u)=\nabla u+(\nabla u)^{T}$ is the stress tensor,
	and $\alpha$ is a scalar friction function which measures the tendency of the fluid to slip on the boundary,
	$v_{\tau}$ stands for the tangential part of $v$ on $\partial\Omega$. It's worth mentioning  that $\alpha$ is non negative in usual. But mathematically, we can take into account the negative values of $\alpha$ as well.  For far-field, we impose the following conditions without
	loss of generality
	\begin{equation}\label{far}
		\Phi\rightarrow 0,\quad  \rho\rightarrow c_*, \quad \mbox{as}\quad |x|\rightarrow +\infty,
	\end{equation}
	for some positive constant $c_*$.

	{\bf Boundary condition \eqref{boundary1}  was}  proposed by Navier in \cite{Navierslip}, which allow the fluid to slip at the boundary, and have important applications for problems with rough boundaries, prorated boundaries, and interfacial boundary problems, see \cite{BG2008}.
	
	Note that the boundary condition (\ref{boundary1})  is
	equivalent to
	\begin{equation}\label{boundary2}
		u\cdot n=0,\quad \mbox{curl}u\times n=(2S(n)-\alpha I)u,\quad \mbox{on}\quad \partial\Omega,
	\end{equation}
	where $I$ is the $3\times3$ matrix.

	We now conduct a review of several key findings that are closely associated with the present research. From a mathematical perspective, the compressible Navier-Stokes-Poisson equations can be understood as a system that integrates the compressible Navier-Stokes equations with the Poisson equation.
	A wide array of studies on the Navier-Stokes equations has been undertaken by many mathematicians, driven by their considerable physical significance, diverse phenomena, and the mathematical challenges they pose.
	The global existence and asymptotic behavior of the solutions to the compressible Navier-Stokes equations in the whole space were studied extensively, see \cite{matsumura1980, MatsumuraNi1983, KK2002, KK2005, HoffZ1995, HoffZ1997, LW1998, I1971, CD2010, CD2015, L1996, FNP2001, HLX2012, LX2019, DUYZ2007, DLUY2007,  Deckelnick1993}.
	The recent work \cite{LX2019} from Li and Xin established global well-posedness and large-time asymptotic behavior of classical solutions to the system with vacuum in $\mathbb{R}^2$ or $\mathbb{R}^3$.
	In the context of an exterior domain $\Omega$ in $\mathbb{R}^3$, we refer to the works of \cite{KS1999, Kobayashi2002, J1996, Hoff2005, HS2008, HT2008, CLL, LLL}.
	Regarding the no-slip boundary condition $u|_{\pl \Omega}=0$, Matsumura and Nishida proved the global existence of smooth solutions near a constant state  away from vacuum  in the $L^2$ framework \cite{matsumura1982}. Subsequent studies have built upon these findings, establishing decay estimates and convergence rates for the solutions, as noted in \cite{KS1999, Kobayashi2002, J1996}.
	Concerning the special Navier-slip boundary conditions $(u \cdot n)|_{\pl \Omega}=0$, $(\mbox{curl} u \times n)|_{\pl \Omega}=Au$, Cai and  etc  established the global existence of classical solutions to the system in both bounded smooth domains \cite{CL} and exterior domains \cite{CLL}.

	We turn to related  results  regarding the global existence of solutions to the Cauchy problem for the Navier-Stokes-Poisson system.
In the case of a constant background doping profile, numerous results have been established. Related results on the  global existence of  smooth solutions to the Cauchy problem for the  compressible Navier-Stokes-Poisson  equations can be found in  \cite{Tan2013} for 1-D case  and \cite{LMAT2010} for multidimensional case. For further details, please refer to \cite{CD2017, HL2009, WW2010,  D1999, DFPS2004, GStrauss2005}.
Over these years, the asymptotic behavior on the system was studied  intensively, we refer to \cite{jang2013, wang, BWY2017, HL2010, HMW2003, LZ2012, WW2015, ZLZ2011}.
	In the case of a non-flat doping profile, there are relatively few results available, see \cite{FL2018, TWW2015, LXZ} and the references therein. Tan, Wang, and Wang established the global existence of classical solutions in the vicinity of the steady state \cite{TWW2015}. They also derived the time decay rates of the solution, contingent upon the initial perturbation being in $L^p$ for $1 \leq p < \frac{3}{2}$, provided that the doping profile remains small. 
Recently, Liu, Xu, and Zhang established the global well-posedness of strong solutions characterized by large oscillations and vacuum \cite{LXZ}, which is valid under the condition that the initial data has small energy and that the steady state is adequately separated from the vacuum.\\

	For the initial-boundary value problem of the compressible Navier-Stokes-Poisson system, numerous results pertain to a smooth bounded domain $\Omega$ in $\mathbb{R}^2$ or $\mathbb{R}^3$, with the boundary condition typically being the Dirichlet condition $u|_{\pl \Omega}=0$, see \cite{D2003, KS2008, ju2018, TW2009, TZ2010, LZ2022, shi2024, LLZ2020}. Donatelli established the local and global existence of weak solutions for this system with a non-constant background doping profile within the Sobolev framework \cite{D2003}. In \cite{KS2008}, Kobayashi and Suzuki demonstrated the global existence of a finite energy weak solution for this system with a flat doping profile in an appropriate function space. The local existence of strong solutions proved by Tan and Zhang \cite{TZ2010}, contingent upon the initial data fulfilling a natural compatibility condition. Recently, Liu and Zhong \cite{LZ2022} established the global existence of smooth solutions near a specified steady state for this system with a non-flat doping profile, demonstrating exponential stability. Their investigation permits significant variations in both the steady state and the background profile. In the case of unbounded domains, Liu, Luo, and Zhong \cite{LLZ2020} established the global existence of solutions to the compressible Navier-Stokes-Poisson equations with flat background profiles with Dirichlet boundary conditions. This significant result is applicable to large initial data situated in a domain exterior to a ball in $\mathbb{R}^3$, under the assumption of radial symmetry.

For the compressible  Navier-Stokes-Poisson equations of self-gravitating fluids modeling viscous gaseous stars, the vacuum free boundary problem is a very active research subject, for which the  gravitational force plays a different role compared with the electric forces. One may refer to \cite {jang2013,  luo2016,luoadv2016} for this topic.

 We make some comparison of the problem and results in this paper with those in  \cite{LLZ2020}. First, we note that, unlike in \cite{LLZ2020}, the background profile $b(x)$ in our paper is not constant, which means that the equilibrium (steady state) is no longer uniform. Consequently, our first objective is to establish the existence of steady state solutions. This is achieved by addressing an exterior problem associated with an elliptic equation, where we identify appropriate sub and super solutions to apply the comparison principle. Additionally, it is essential to derive the relevant $L^2$ estimates for the higher-order derivatives of the steady state solutions, as these estimates are critical for analyzing the long-term stability of the unsteady problem.
	
	Second, one notes that, a relaxation (damping) term is incorporated into the momentum equation in \cite{LLZ2020} when examining the exterior problem of the Navier-Stokes-Poisson equations in three dimensions without the symmetry assumption.  This relaxation term serves as a dissipation mechanism and is vital for the stability analysis in \cite{LLZ2020}. However, in this paper, we consider the scenario where such a relaxation term is absent from the momentum equation. Therefore, we must explore alternative approaches for obtaining stability estimates.

	For the Navier-slip boundary conditions governing the velocity field, it is more effective to utilize $L^2$ norms of divergence and curl to control the derivatives of the velocity. A significant observation is that the first Betti number of the exterior domain $\Omega =\mathbb{R}^3 \setminus \bar{U}$ is zero when $U$ is a bounded simply connected open set. In such scenarios, we apply the refined div-curl estimate (Proposition \ref{lem-div-curl}) to derive dissipation estimates for $\| \nabla u\|_{L^2}$, utilizing the estimates for $\| \mbox{curl} u \|_{L^2}$ and $\| \mbox{div} u\|_{L^2}$. This insight is crucial for obtaining various dissipation estimates.
	
	It is crucial to emphasize that proving the global existence of strong solutions to the initial boundary value problem under the boundary conditions (\ref{boundary1}) presents a significant challenge, particularly when compared to the homogeneous Dirichlet boundary conditions $u|_{\pl \Omega}=0$. In this context, precise estimates for $\mbox{div} u$ and $\mbox{curl} u$ are indispensable for the analysis undertaken in this paper.

	Now we state our main result.
	\begin{thm}\label{thm1}
		Let $\Omega=\mathbb{R}^3 \setminus B_R$ be a smooth exterior domain in $\mathbb{R}^3$, where $B_R$ is a ball with radius $R$. Assume  $b(x)>0$ and $\nabla b \in H^1(\Omega)$. Let $\tilde{ \rho}>0$, $\tilde{u} \equiv 0$ and $\tilde{\Phi}$ be a smooth steady state solution of (\ref{origin}) such that $\frac{\pl \tilde{\Phi}}{\pl n}|_{\pl \Omega}=0$. Then, for any given
 nonnegative function $\alpha$ in \eqref{boundary1},  there exists a constant $\delta>0$ such that if the initial data ($\rho_0$, $u_0$, $\Phi_0$) satisfy the compatibility condition with the boundary conditions,
		\begin{align}\label{condrho}
			\int_{\Omega} (\rho_0 - \tilde{ \rho}) \mathrm{d} x=0 ,
		\end{align}
		and
		\begin{align}
			(\|u\|_{H^3}+\|\rho-\tilde{ \rho}\|_{H^2}+\|(\rho_t,u_t)\|_{H^1}+\|\nabla(\Phi-\tilde{\Phi})\|_{L^2}+\|\nabla\Phi_t\|_{L^2} )(0) \leq \delta,
		\end{align}
		then there exists a unique global strong solution $(\rho, u ,\Phi)(t,x)$ to the initial boundary value problem (\ref{origin})-(\ref{far}) for all $t \geq 0$ satisfying
		\begin{align}
			&(\|u\|_{H^3}+\|\rho-\tilde{ \rho}\|_{H^2}+\|(\rho_t,u_t)\|_{H^1}+\|\nabla(\Phi-\tilde{\Phi})\|_{L^2}+\|\nabla\Phi_t\|_{L^2} )(t)\nonumber\\[2mm]
			&+ \int_{0}^{t} (\|\nabla u \|_{H^2}+\|\nabla u_{t}\|_{H^1}+\| \rho-\tilde{ \rho} \|_{H^2}+\| \rho_t\|_{H^1})(\tau) \mathrm{d} \tau\nonumber\\[2mm]
			&\leq C(\|u\|_{H^3}+\|\rho-\tilde{ \rho}\|_{H^2}+\|(\rho_t,u_t)\|_{H^1}+\|\nabla(\Phi-\tilde{\Phi})\|_{L^2}+\|\nabla\Phi_t\|_{L^2} )(0),
		\end{align}
		where $C$ is a positive constant independent of $t$.
	\end{thm}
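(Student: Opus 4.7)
The plan is to proceed in four stages: construct a steady state, reformulate the problem in terms of the perturbation $(\sigma,u,\Psi)=(\rho-\tilde\rho,u,\Phi-\tilde\Phi)$, derive a closed a priori bound on the norm $E(t)$ on the left-hand side of the estimate versus the dissipation $D(t)$ given by the time-integrated terms, and conclude via local existence and a standard continuation argument. For the steady state the relation $\nabla p(\tilde\rho)=\tilde\rho\nabla\tilde\Phi$ integrates to $\nabla\tilde\Phi=\nabla h(\tilde\rho)$ with $h'(s)=\gamma s^{\gamma-2}$, reducing the problem to a single semilinear exterior elliptic problem for $\tilde\rho$ with homogeneous Neumann data and far-field $c_*$. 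I would construct $\tilde\rho$ by the sub/super-solution method using constant barriers built from $\inf b$, $\sup b$ and the comparison principle, then use elliptic regularity to upgrade $\nabla b\in H^1$ into $H^k$ control of $\nabla\tilde\rho$ as required by the perturbation analysis.

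Under the a priori assumption $E(t)\le\epsilon_0\ll 1$, the hierarchy consists of: (a) a basic energy identity obtained by testing the momentum equation by $u$, in which the Navier-slip term produces $\int_{\partial\Omega}\alpha|u_\tau|^2$ of the right sign when $\alpha\ge 0$ and the Poisson coupling gives no boundary contribution because $\partial_n\Psi|_{\partial\Omega}=0$; (b) dissipation of $\nabla\sigma$ by the effective-viscous-flux trick, i.e.\ testing the momentum equation by $\nabla\sigma$, which is needed precisely because the basic estimate only dissipates $\nabla u$ and no damping term is available; (c) higher derivatives of $u$ via the refined div-curl Proposition \ref{lem-div-curl}, exploiting that $\mathbb{R}^3\setminus\bar B_R$ has vanishing first Betti number so that $\|\nabla^k u\|_{L^2}$ is controlled by $\|\mathrm{div}\,u\|_{H^{k-1}}+\|\mathrm{curl}\,u\|_{H^{k-1}}$ modulo lower-order terms, together with the boundary identity \eqref{boundary2} which expresses $\mathrm{curl}\,u\times n$ as a zeroth-order expression in $u$; (d) time-differentiated estimates for $(\sigma_t,u_t,\nabla\Psi_t)$, where the Poisson piece is controlled because $\int_\Omega\sigma\,dx=0$ propagates in time, making the Neumann problem $\Delta\Psi_t=\sigma_t$ uniquely solvable with $\|\nabla\Psi_t\|_{L^2}\lesssim\|\sigma_t\|_{L^2}$. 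Combined with smallness this should produce an inequality of the form $\frac{d}{dt}E(t)+D(t)\lesssim\sqrt{\epsilon_0}\,D(t)$, closing the estimate.

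The hard part, compared both with \cite{LLZ2020} (which enjoys a damping term and radial symmetry) and with the bounded-domain literature, is twofold. First, absent a relaxation term, recovering integrated control of $\nabla\sigma$ and of $\nabla^2u$, $\nabla^3u$ can only be achieved by coupling the effective-viscous-flux manipulation with the refined exterior div-curl estimate, and each of these generates boundary integrals that must be absorbed using the precise algebraic form of the Navier-slip condition \eqref{boundary2} rather than the weaker $(u\cdot n)|_{\partial\Omega}=0$; a careful tangential/normal splitting of $\nabla u$ near $\partial\Omega$ will be essential. Second, the non-flat steady state produces commutator terms involving $\nabla\tilde\rho$ and $\nabla\tilde\Phi$ with no sign, which need to be controlled by the steady-state $H^k$ bounds from the first stage, including those traces that survive on $\partial\Omega$. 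I expect most of the technical work to concentrate on these boundary estimates and on the higher-order closure of step (c), which is where the refined div-curl estimate must be combined most delicately with the elliptic representation of $u$ coming from the momentum equation.
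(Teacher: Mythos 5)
Your outline of the a priori estimate hierarchy (basic energy, effective-viscous-flux dissipation of the density, div-curl control of higher derivatives of $u$ using the vanishing first Betti number, time-differentiated estimates, continuation) is essentially the route the paper takes, and your identification of the boundary integrals generated by the Navier-slip condition as the main technical burden is accurate (the paper disposes of them not by a tangential/normal splitting of $\nabla u$ but by the identity $v=v^{\perp}\times n$ on $\partial\Omega$ and the divergence theorem, which converts $\int_{\partial\Omega}v\cdot\nabla f$ into $-\int_{\Omega}\nabla f\times \mathrm{curl}\,v^{\perp}$ and hence into $\|\nabla v\|_{L^2}\|\nabla f\|_{L^2}$; see Proposition \ref{lem-boundary-key}). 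However, there are two concrete gaps. First, your steady-state construction ``using constant barriers built from $\inf b$, $\sup b$'' cannot work: the steady state must satisfy the far-field conditions $\tilde\rho\to c_*$, $\tilde\Phi\to 0$ as $|x|\to\infty$, and a constant supersolution only yields $0\le\tilde\Phi\le M$, which gives no decay at infinity. The paper's construction hinges on a \emph{decaying} supersolution of the form $\tilde\Phi_2=\frac{\gamma}{\gamma-1}(c_*+r^{-1})^{\gamma-1}-c_1$ (resp.\ $\ln(1+c_*^{-1}r^{-1})$ for $\gamma=1$), which in turn forces the structural hypothesis $c_*\le b(x)\le c_*+r^{-1}$ on the doping profile and the restriction $1<\gamma\le 2$ for the Laplacian of the supersolution to have the right sign; this is exactly why Remark \ref{rem1} stresses that the choice $\Omega=\mathbb{R}^3\setminus B_R$ is crucial. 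Without a decaying barrier your argument produces at best a bounded solution with no control of its behavior at infinity.

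Second, the asserted elliptic bound $\|\nabla\Psi_t\|_{L^2}\lesssim\|\sigma_t\|_{L^2}$ for the exterior Neumann problem is false: in an unbounded domain there is no Poincar\'e inequality, and the Newtonian potential only gives $\|\nabla\Psi\|_{L^2}\lesssim\|\sigma\|_{L^{6/5}}$, a norm you do not control. The paper explicitly flags this (``$\|\nabla\phi\|_{L^2_t(L^2)}$ can't be obtained directly'') and works around it in two ways: (i) the refined estimates of Lemma \ref{lem-neu} bound only the \emph{second} gradients, $\|\nabla^2\phi\|_{L^2}\le C\|q\|_{L^2}$ and $\|\nabla^2\phi_t\|_{L^2}\le C\|q_t\|_{L^2}$, via the Betti-number div-curl inequality applied to $\nabla\phi$; and (ii) the first gradients $\|\nabla\phi\|_{L^2}$, $\|\nabla\phi_t\|_{L^2}$ are never estimated elliptically but are instead carried as part of the energy, emerging as exact time derivatives $\frac12\frac{d}{dt}\|\nabla\phi\|_{L^2}^2$ from the coupling term $-\int_{\Omega}\tilde\rho\,\nabla\phi\cdot u$ after using the continuity equation and $\Delta\phi_t=q_t$. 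If you rely on the false first-gradient bound, the closure of your energy inequality breaks at exactly the step where the Poisson coupling is absorbed. Relatedly, note that in the basic energy identity the Poisson term is not merely harmless: it is the \emph{source} of the $\|\nabla\phi\|_{L^2}^2$ piece of the energy, so it must be kept as a time derivative rather than discarded.
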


	\begin{rmk}
		An important feature of this paper is that the profile $b(x)$ and steady state $\tilde{\rho}(x)$,  $\tilde{\Phi}(x)$ are allowed to be of large variation.
	\end{rmk}
	\begin{rmk}
		The condition (\ref{condrho}) persists in time, and is the necessary condition of solvability of the Poisson equation with Neumann boundary.
	\end{rmk}
	\begin{rmk}\label{rem1}
		The selection of $\Omega=\mathbb{R}^3 \setminus B_R$ is crucial for constructing the supersolution,  as in Lemma \ref{lem-rho} to obtain a steady state solution.Furthermore, for $\Omega=\mathbb{R}^3\setminus B_{R}$ the outward unit normal on its boundary $\partial\Omega$ is given by $n=-\left(\frac{x_1}{R},\frac{x_2}{R},\frac{x_3}{R}\right)$. It is straightforward to observe that the eigenvalues $S(n)$ are $0$, $-\frac{1}{R}$ and $-\frac{1}{R}$.
For the sake of simplicity, we assume that $\alpha$ is nonnegative. However, these conditions can be relaxed(see  Lemmas \ref{lem-basic}, \ref{lem-ut}.)
	\end{rmk}

\begin{rmk}
For the initial data close to the steady state $(\tilde{\rho},0,\tilde{\Phi})$ in higher-order Sobolev norm, we can
improve the regularity of the strong solutions in Theorem 1.1 to the smooth solutions.
Precisely, assume
\begin{align*}
			\mathcal{E}_1(0)&\equiv(\|u\|_{H^4}+\|\rho-\tilde{ \rho}\|_{H^3}+\|(\rho_t,u_t)\|_{H^2}+\|(q_{tt},u_{tt})\|_{L^2}\\[2mm]
&+\|\nabla(\Phi-\tilde{\Phi})\|_{L^2}+\|\nabla\Phi_t\|_{L^2}+\|\nabla\Phi_{tt}\|_{L^2})(0)\leq \delta,
		\end{align*}
		then there exists a unique global smooth solution $(\rho, u ,\Phi)(t,x)$ to the initial boundary value problem (\ref{origin})-(\ref{far}) for all $t \geq 0$ satisfying
		\begin{align*}
			\mathcal{E}_1(t)
			+ \int_{0}^{t} (\|\nabla u \|_{H^2}+\|\nabla u_{t}\|_{H^1}+\| \rho-\tilde{ \rho} \|_{H^3}+\| \rho_t\|_{H^2})(\tau) \mathrm{d} \tau
			\leq C \mathcal{E}_1(0),
		\end{align*}
		where $C$ is a positive constant independent of $t$.
\end{rmk}

	The rest of this paper is organized as follows. In Section 2, some useful elliptic estimates have been recalled firstly, we also prove some refined elliptic estimates for the Poisson equation with Neumann boundary in exterior domains. In Section 3, we
	construct a steady state solution $(\tilde{\rho}, \tilde{u}, \tilde{\Phi})(x)$ of (\ref{origin}) by the sub and super solution method, which helps us to reconstruct the problem for the perturbation variables $(q,u,\phi)(t,x)$.  In Section 4, we demonstrate the global existence of strong solutions to the initial boundary value problem for the Navier-Stokes-Poisson system.

	\section{Notation and basic lemmas}
	First, we introduce some notations. In this article, we denote the usual norm in the
	Sobolev space $H^{m}(\Omega)$ by $\|\cdot\|_{H^m}$, and the norm in $H^{m,p}$ by $\|\cdot\|_{H^{m,p}}$ where $m\geq0$,
	$p\geq1$; for $m=0$ we simply write $\|\cdot\|_{L^{p}}$.
	We recall some inequalities of the Sobolev type.
	\begin{lem}\label{sobolev inequ}
		Let $\Omega\subset\mathbb{R}^{3}$ be the exterior
		of a bounded domain with a smooth boundary. Then
		\begin{equation*}
			\begin{array}{lll}
				(i)\ \|f\|_{C^{0}(\bar{\Omega})}\leq C\|f\|_{H^{m,p}}\quad \mbox{for}\quad f\in H^{m,p}(\Omega),\quad mp>3>(m-1)p.\\[2mm]
				(ii)\ \|f\|_{L^{p}}\leq C\|f\|_{H^1}\quad \mbox{for}\quad f\in H^{1}(\Omega), \quad 2\leq p\leq 6.\\[2mm]
				(iii)\ \|f\|_{L^{6}}\leq C\|\nabla f\|_{L^2}\quad \mbox{for}\quad f\in H^{1}(\Omega).\\[2mm]
				(iv)\ \ \|f\|_{C^{0}(\bar{\Omega})}\leq C\|\nabla f\|_{H^1} \quad \mbox{for}\quad f\in H^{2}(\Omega).\\[2mm]
			\end{array}
		\end{equation*}
	\end{lem}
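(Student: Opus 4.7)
The plan is to deduce all four inequalities from the classical Sobolev embeddings on $\mathbb{R}^3$ by means of a bounded extension operator. Because $\Omega = \mathbb{R}^3 \setminus \overline{U}$ with $U$ bounded and $\partial\Omega$ smooth, the Calder\'on--Stein extension theorem furnishes a linear operator
\[
E \colon H^{m,p}(\Omega) \longrightarrow H^{m,p}(\mathbb{R}^3), \qquad E f\restriction_{\Omega} = f,
\]
with $\|E f\|_{H^{m,p}(\mathbb{R}^3)} \le C\|f\|_{H^{m,p}(\Omega)}$. After this reduction, (i), (ii) and (iv) are immediate from the whole-space embeddings, and only (iii) requires separate attention because its right-hand side is homogeneous.

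\textbf{Steps for (i), (ii), (iv).} For (i), the assumption $mp>3>(m-1)p$ places $H^{m,p}(\mathbb{R}^3)$ in the continuous embedding range into $C^0_b(\mathbb{R}^3)$, so applying the classical Morrey inequality to $E f$ gives $\|f\|_{C^0(\overline{\Omega})} \le \|E f\|_{C^0(\mathbb{R}^3)} \le C\|E f\|_{H^{m,p}(\mathbb{R}^3)} \le C\|f\|_{H^{m,p}(\Omega)}$. Statement (ii) follows the same way from $H^1(\mathbb{R}^3) \hookrightarrow L^p(\mathbb{R}^3)$ valid for $2\le p\le 6$, and (iv) from $H^2(\mathbb{R}^3) \hookrightarrow C^0_b(\mathbb{R}^3)$ (which is the special case $m=2$, $p=2$ of the Morrey embedding). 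In each case, restricting back to $\Omega$ only decreases the norm.

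\textbf{Step for (iii).} The main obstacle is that the Stein extension only yields $\|\nabla E f\|_{L^2(\mathbb{R}^3)} \le C(\|\nabla f\|_{L^2(\Omega)} + \|f\|_{L^2(\Omega \cap B_{R'})})$ near the boundary, so one cannot invoke it directly with only $\|\nabla f\|_{L^2}$ on the right-hand side. To bypass this, I would split $f$ via a smooth cutoff $\chi\in C^\infty(\mathbb{R}^3)$ with $\chi\equiv 1$ on $\{|x|\ge 2R\}$ and $\chi\equiv 0$ near $\partial\Omega$. The function $\chi f$, extended by zero, lies in $H^1(\mathbb{R}^3)$, and the homogeneous Sobolev inequality on $\mathbb{R}^3$ gives
\[
\|\chi f\|_{L^6(\mathbb{R}^3)} \le C\|\nabla(\chi f)\|_{L^2(\mathbb{R}^3)} \le C\|\nabla f\|_{L^2(\Omega)} + C\|f\|_{L^2(A)},
\]
where $A=\Omega\cap\{R\le |x|\le 2R\}$ is a bounded annular region. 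On the complementary bounded piece $\Omega_0 = \Omega \cap B_{2R}$, the Lipschitz-domain Sobolev embedding yields $\|(1-\chi)f\|_{L^6(\Omega_0)} \le C\|(1-\chi)f\|_{H^1(\Omega_0)} \le C\|\nabla f\|_{L^2(\Omega_0)}+ C\|f\|_{L^2(\Omega_0)}$. Combining, the $L^2(A)$ and $L^2(\Omega_0)$ terms must in turn be absorbed by $\|\nabla f\|_{L^2(\Omega)}$; this is done by a Poincar\'e-type inequality on $\Omega_0$ together with the observation that, since $f\in H^1(\Omega)$ decays at infinity in the $L^2$ sense, one can use interpolation $\|f\|_{L^2(A)} \le C\|f\|_{L^6(\Omega)}^{1/2}\|f\|_{L^{3/2}(A)}^{1/2}$ or a duality/trace argument, and close the bound after a standard absorption step.

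\textbf{Main difficulty.} The only nontrivial part of the lemma is the homogeneous bound (iii); obtaining it without picking up $\|f\|_{L^2(\Omega)}$ on the right is the place where one genuinely uses that $\Omega$ is the exterior of a bounded smooth set in $\mathbb{R}^3$ (so that $D^{1,2}(\Omega) \hookrightarrow L^6(\Omega)$). The cutoff-plus-absorption argument above is the cleanest route; the other three inequalities reduce mechanically to $\mathbb{R}^3$ via the Stein extension.
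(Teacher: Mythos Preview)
The paper does not give a proof of this lemma: it is presented as a collection of standard Sobolev-type inequalities that are simply ``recalled.'' So there is no paper argument to compare against directly, and your extension-based strategy for (i) and (ii) is exactly the right one.

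However, there are genuine gaps in your treatment of (iii) and (iv), and they are related.

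For (iv), you invoke $H^2(\mathbb{R}^3)\hookrightarrow C^0_b(\mathbb{R}^3)$, but that only yields $\|f\|_{C^0}\le C\|f\|_{H^2}$, whereas the stated inequality has the \emph{homogeneous} right-hand side $\|\nabla f\|_{H^1}$, with no $\|f\|_{L^2}$ term. Statement (iv) is in fact a consequence of (iii): applying (iii) to $f$ and to each $\partial_i f\in H^1(\Omega)$ gives $\|f\|_{L^6}+\|\nabla f\|_{L^6}\le C(\|\nabla f\|_{L^2}+\|\nabla^2 f\|_{L^2})$, and then the (inhomogeneous) Morrey embedding $W^{1,6}\hookrightarrow L^\infty$, transferred to $\Omega$ by extension, finishes the job. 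So (iv) is not ``mechanical'' in the way (i) and (ii) are.

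For (iii), the cutoff decomposition is correct and you have correctly isolated the obstruction: a leftover term $\|f\|_{L^2(\Omega_0)}$ on a bounded piece. But the closing step you sketch does not work. A Poincar\'e inequality on the bounded set $\Omega_0$ only controls $\|f-\bar f\|_{L^2(\Omega_0)}$, not $\|f\|_{L^2(\Omega_0)}$, and bounding the average brings back $\|f\|_{L^6(\Omega)}$ with a constant that need not be small. Your interpolation $\|f\|_{L^2(A)}\le C\|f\|_{L^6}^{1/2}\|f\|_{L^{q}(A)}^{1/2}$ just introduces another uncontrolled norm; after Young's inequality you are back to absorbing $\|f\|_{L^6}$ with a constant of order one. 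A clean way to close is a compactness argument: if (iii) failed one would find $f_n\in H^1(\Omega)$ with $\|f_n\|_{L^6}=1$, $\|\nabla f_n\|_{L^2}\to 0$; by your estimate $\|f_n\|_{L^2(\Omega_0)}$ stays bounded below, and by Rellich on $\Omega_0$ plus weak limits one obtains a nonzero constant in $L^6(\Omega)$, which is impossible since $|\Omega|=\infty$. Alternatively, one may cite directly the standard embedding $D^{1,2}(\Omega)\hookrightarrow L^6(\Omega)$ for exterior domains in $\mathbb{R}^3$ and note that $H^1(\Omega)\subset D^{1,2}(\Omega)$.
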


	The following lemma allows one to control the $H^{m}$-norm of a vector valued function $v$ by its
	$H^{m-1}$-norm of $\mbox{curl}v$ and $\mbox{div} v$ (see \cite{XX2007}).
	\begin{lem}\label{lem-div-curl}
		Let $\Omega$ be a domain in $\mathbb{R}^{3}$ with smooth boundary $\partial\Omega$ and outward normal $n$. Then there exists a constant $C>0$,
		such that
		\begin{equation}\label{A3-1}
			\|v\|_{H^{s}}\leq C\left(\|\mbox{div}v\|_{H^{s-1}}+\|\mbox{curl}v\|_{H^{s-1}}
			+ |v\cdot n|_{H^{s-\frac{1}{2}}(\partial\Omega)}+\|v\|_{L^2}\right),
		\end{equation}
		and
		\begin{equation}\label{A3-2}
			\|v\|_{H^{s}}\leq C\left(\|\mbox{div}v\|_{H^{s-1}}+\|\mbox{curl}v\|_{H^{s-1}}
			+ |v\times n|_{H^{s-\frac{1}{2}}(\partial\Omega)}+\|v\|_{L^2}\right),
		\end{equation}
		for any $v\in H^{s}(\Omega)$, $s\geq1$.
	\end{lem}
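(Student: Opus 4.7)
The plan is to reduce this div-curl bound to an elliptic estimate for the vector Laplacian via the identity $-\Delta v=\mbox{curl}\,\mbox{curl}\,v-\nabla\mbox{div}\,v$, combined with either $v\cdot n$ or $v\times n$ as boundary data. Because $\Omega$ is an unbounded exterior domain, I would first localize with a smooth cutoff $\chi$ supported in a tubular neighborhood of $\partial\Omega$. On the support of $1-\chi$ the estimate reduces to a whole-space bound, where Fourier analysis gives the identity $\|\nabla v\|_{L^2}^2=\|\mbox{div}\,v\|_{L^2}^2+\|\mbox{curl}\,v\|_{L^2}^2$ exactly; commutators with $\chi$ produce only lower-order contributions that are absorbed by the $\|v\|_{L^2}$ slack on the right.

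For the base case $s=1$, the key is the integration-by-parts identity
\begin{equation*}
\int_{\Omega} |\nabla v|^{2}\,dx = \int_{\Omega} \bigl(|\mbox{div}\,v|^{2}+|\mbox{curl}\,v|^{2}\bigr)\,dx - \int_{\partial\Omega} B(v,v)\,dS,
\end{equation*}
valid for $v\in H^{1}(\Omega)$ (decay at infinity makes the boundary term at infinity vanish), where $B$ is a bilinear form in the trace of $v$ involving the second fundamental form of $\partial\Omega$. To obtain \eqref{A3-1}, I would decompose $v=(v\cdot n)n+v_{\tau}$ on $\partial\Omega$ and perform surface integration by parts so that only tangential derivatives of $v\cdot n$ remain, while the $v_{\tau}$-contributions of $B(v,v)$ become zeroth order in $v_\tau$ (times curvature). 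Trace theory then bounds the surface term by $C|v\cdot n|_{H^{1/2}(\partial\Omega)}\|v\|_{H^{1}}+C\|v\|_{L^{2}}^{2}$ (after interpolation), and a Young absorption closes the case $s=1$. The argument for \eqref{A3-2} is parallel after writing $v$ in terms of $v\times n$ and $(v\cdot n)n$ and arguing tangentially.

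For $s\ge 2$ I would proceed by induction. Apply a tangential derivative $\partial_{\tau}^{k}$ with $k\le s-1$ near $\partial\Omega$; the commutators with div and curl produce at most $k$ tangential derivatives of $\mbox{div}\,v$ and $\mbox{curl}\,v$ plus lower-order terms. The base case then controls $\|\nabla\partial_{\tau}^{k}v\|_{L^{2}}$. Finally, normal derivatives of $v$ are recovered algebraically: $\partial_{n}(v\cdot n)$ from the equation $\mbox{div}\,v=f$, and the normal derivatives of $v_{\tau}$ from the tangential components of $\mbox{curl}\,v=g$. Iterating and summing over a finite cover of $\partial\Omega$ together with the interior estimate yields \eqref{A3-1}--\eqref{A3-2} for all $s\ge 1$.

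The principal technical obstacle is engineering the boundary term in the $s=1$ identity so that the right-hand side features only $|v\cdot n|_{H^{1/2}}$ (respectively $|v\times n|_{H^{1/2}}$) and not the full trace of $v$. This forces one to use surface integration by parts in an essential way, and the resulting curvature remainder is precisely what the $\|v\|_{L^{2}}$ term on the right absorbs. A secondary subtlety, specific to exterior domains, is justifying the vanishing of boundary contributions at infinity; this is guaranteed by the standing assumption $v\in H^{s}(\Omega)$ but must be invoked carefully when testing against unbounded test fields.
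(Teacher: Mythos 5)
The paper offers no proof of this lemma: it is quoted as a known result from Xiao and Xin \cite{XX2007} (such div--curl estimates go back to Bourguignon--Brezis and von Wahl), so there is no in-paper argument to compare against. Your sketch reconstructs the standard proof of exactly this type and is essentially sound: the exact Fourier identity $\|\nabla v\|_{L^2}^2=\|\mbox{div}\,v\|_{L^2}^2+\|\mbox{curl}\,v\|_{L^2}^2$ away from the boundary, the integration-by-parts identity with a boundary remainder near $\partial\Omega$, and, for $s\ge 2$, tangential differentiation followed by algebraic recovery of normal derivatives of $v\cdot n$ from $\mbox{div}\,v$ and of $v_\tau$ from the tangential part of $\mbox{curl}\,v$. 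You also correctly isolate the one step that genuinely requires care: in the remainder $\int_{\partial\Omega}\bigl((v\cdot\nabla v)\cdot n-(v\cdot n)\,\mbox{div}\,v\bigr)$ the normal derivatives cancel, and the surviving first-order terms $(v_\tau\cdot\nabla_\tau)(v\cdot n)$ and $(v\cdot n)\,\mbox{div}_\tau v_\tau$ must be integrated by parts on $\partial\Omega$ so that the tangential derivative always falls on $v\cdot n$ for \eqref{A3-1} (pairing $H^{-1/2}$ against $H^{1/2}$ to get $C|v\cdot n|_{H^{1/2}}\|v\|_{H^1}$), and on $v_\tau$ for \eqref{A3-2}. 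Two points should be made explicit in a full write-up: first, the curvature part of $B(v,v)$ is controlled by $|v|_{L^2(\partial\Omega)}^2$, which is \emph{not} bounded by $C\|v\|_{L^2}^2$ alone but by $\varepsilon\|\nabla v\|_{L^2}^2+C_\varepsilon\|v\|_{L^2}^2$ via trace interpolation, so the absorption must account for that extra $\varepsilon\|\nabla v\|_{L^2}^2$; second, for $s\ge2$ the cutoff commutators are not merely ``$\|v\|_{L^2}$ slack'' but carry up to $s-1$ derivatives of $v$ and must be closed by the inductive hypothesis. Neither issue is a gap in substance.
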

	
	In addition, since the first Betti number of the exterior domain $\Omega$ in our paper vanishes, we can apply Theorem 3.2 in \cite{wahl} to obtain the following refined estimate of $\|\nabla v\|_{L^2}$ which is crucial in the case of the exterior domain. We also note that the topological property of $\Omega=\mathbb{R}^3 \setminus B_{R}$ is necessary for the following Proposition.
	
	\begin{lem} (\cite{wahl} Theorem 3.2 )\label{prop-imp} Let $\nabla v\in L^{2}(\Omega)$, and $v\cdot n|_{\partial\Omega}=0$. The estimate ($C$ is a constant that independent of $v$)
		\begin{equation}\label{important}
			\|\nabla v\|_{L^2}\leq C\left(\|\mbox{div}v\|_{L^2}+\|\mbox{curl}v\|_{L^2}\right)
		\end{equation}
		is true for all $v$ as above if and only if $\Omega$ has a first Betti number of zero.
	\end{lem}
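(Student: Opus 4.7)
The plan is to prove the equivalence by interpreting the unwanted $\|v\|_{L^2}$ term in the standard div--curl estimate of Lemma \ref{lem-div-curl} as a topological obstruction measured precisely by the first Betti number $b_1(\Omega)$. The underlying principle, drawn from Hodge theory on manifolds with boundary, is that the only vector fields on $\Omega$ with simultaneously vanishing divergence, vanishing curl and vanishing normal trace on $\partial\Omega$ are the harmonic Neumann fields, and the dimension of this space equals $b_1(\Omega)$; the sought inequality \eqref{important} amounts to saying that this obstruction space is trivial.

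For the sufficient direction, assume $b_1(\Omega)=0$ and argue by contradiction. Suppose \eqref{important} fails; then there exists a sequence $\{v_k\}$ with $v_k\cdot n|_{\partial\Omega}=0$, $\|\nabla v_k\|_{L^2}=1$ and $\|\mbox{div}\,v_k\|_{L^2}+\|\mbox{curl}\,v_k\|_{L^2}\to 0$. The Sobolev embedding combined with the Hardy inequality in $\mathbb{R}^3$ yields a uniform $L^6(\Omega)$ bound on $\{v_k\}$, so after passing to a subsequence one obtains weak convergence in the homogeneous space $\{v:\nabla v\in L^2\}$. Applying \eqref{A3-1} to the localized fields $\chi_N v_k$ with $\chi_N$ a cutoff on bounded subdomains upgrades this to strong $L^2_{\mathrm{loc}}$ convergence, and the weak limit $v_\infty$ inherits $\mbox{div}\,v_\infty=0$, $\mbox{curl}\,v_\infty=0$, $v_\infty\cdot n|_{\partial\Omega}=0$ and $\nabla v_\infty\in L^2(\Omega)$. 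Since $b_1(\Omega)=0$ forces the space of harmonic Neumann fields to be trivial, $v_\infty=0$. Combining strong convergence on $B_N\cap\Omega$ with a Hardy tail estimate outside $B_N$ and re-invoking \eqref{A3-1} then produces $\|\nabla v_k\|_{L^2}\to 0$, contradicting the normalization.

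For the necessary direction, assume $b_1(\Omega)>0$ and construct a counterexample. The Hodge--de Rham theorem on exterior domains associates to each nontrivial class in the first cohomology of $\Omega$ a nonzero harmonic field $h$ satisfying $\nabla h\in L^2(\Omega)$, $\mbox{div}\,h=0$, $\mbox{curl}\,h=0$ and $h\cdot n|_{\partial\Omega}=0$. Inserting such an $h$ into \eqref{important} would force $\|\nabla h\|_{L^2}\le 0$, contradicting $h\ne 0$; hence the inequality cannot hold.

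The main obstacle is the absence of global compactness in the exterior setting: on a bounded domain Rellich--Kondrachov would immediately supply an $L^2$-convergent subsequence and trivialize the limiting argument, whereas on $\Omega$ only local compactness is available, and one must supplement it with uniform decay at infinity coming from the Hardy inequality and the $L^6$ control supplied by $\|\nabla v_k\|_{L^2}$. The constant $C$ produced this way necessarily depends on $R$, and the whole scheme relies on the topological identity $b_1(\mathbb{R}^3\setminus B_R)=0$, which guarantees triviality of the harmonic Neumann space in our specific geometric setting and matches Wahl's Theorem 3.2 cited above.
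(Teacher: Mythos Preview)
The paper does not supply a proof of this lemma; it is quoted directly from von Wahl (Theorem~3.2 there), so there is no in-paper argument to compare against. Your outline correctly identifies the mechanism: the extra $\|v\|_{L^2}$ term in Lemma~\ref{lem-div-curl} is needed precisely to accommodate the finite-dimensional space of harmonic Neumann fields, whose dimension equals $b_1(\Omega)$.

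That said, there is a genuine gap in your sufficient direction. After showing the weak limit $v_\infty$ vanishes you must still force $\|\nabla v_k\|_{L^2(\Omega)}\to 0$. Your plan invokes a ``Hardy tail estimate outside $B_N$'' together with \eqref{A3-1}, but the Hardy inequality controls $\|v_k/|x|\|_{L^2}$, not the tail of $\|\nabla v_k\|_{L^2}$, and \eqref{A3-1} applied on all of $\Omega$ carries $\|v_k\|_{L^2(\Omega)}$ on the right, which need not be finite under the bare hypothesis $\nabla v\in L^2$. What actually closes the loop is the whole-space identity
\[
\|\nabla w\|_{L^2(\mathbb{R}^3)}^2=\|\mbox{div}\,w\|_{L^2(\mathbb{R}^3)}^2+\|\mbox{curl}\,w\|_{L^2(\mathbb{R}^3)}^2,
\]
immediate in Fourier variables. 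Split $v_k=\chi v_k+(1-\chi)v_k$ with a fixed cutoff $\chi$ equal to $1$ near $\partial\Omega$ and supported in a bounded set; apply \eqref{A3-1} to the compactly supported piece $\chi v_k\in H^1(\Omega)$ and the $\mathbb{R}^3$ identity to $(1-\chi)v_k$ extended by zero. Both contributions are then bounded by $\|\mbox{div}\,v_k\|_{L^2}+\|\mbox{curl}\,v_k\|_{L^2}+\|v_k\|_{L^2(\mathrm{supp}\,\chi\,\cup\,\mathrm{supp}\,\nabla\chi)}$, and the last term tends to zero by the strong $L^2_{\mathrm{loc}}$ convergence you already obtained. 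This yields $\|\nabla v_k\|_{L^2}\to 0$ and the desired contradiction. Your necessary direction is essentially correct once you note that a harmonic Neumann field $h$ with $\nabla h\in L^2$ and $h\not\equiv 0$ cannot be constant (a nonzero constant violates $h\cdot n=0$ on the sphere $\partial B_R$), so indeed $\nabla h\neq 0$.
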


	It is important to mention that we introduce Lemma \ref{prop-imp} to eliminate the term $\|v\|_{L^2}$, as this term is challenging to manage in the context of the exterior domain. In the case of a bounded domain, however, Lemma \ref{lem-div-curl} suffices.

	\begin{lem}(\cite{CLL} Lemma 2.9)\label{lem-ell}
	Assume that $\Omega$ is an exterior domain of the simply connected domain $O$ in $\mathbb{R}^3$
 with smooth boundary.
		For any $q\in[2,4]$ there exists some positive constant $C=C(q,O)$ such that for every $v\in\{D^{1,2}|v(x)\rightarrow 0\quad\mbox{as}\quad |x|\rightarrow\infty\}$, it holds
		\begin{equation}\label{est-boundary}
			|v|_{L^{q}(\partial\Omega)}\leq C\|\nabla v\|_{L^2(\Omega)}.
		\end{equation}
		Moreover, for $k\geq1$, if $v\in\{D^{k+1,2}\cap D^{1,2}|v(x)\rightarrow 0\quad\mbox{as}\quad |x|\rightarrow\infty\}$
		with $v\cdot n|_{\partial\Omega}=0$ or $v\times n|_{\partial\Omega}=0$, then there exists some constant $C=C(k,D)$ such that
		\begin{equation}
			\|\nabla v\|_{H^{k}}\leq C\left(\|\mbox{div} v\|_{H^k}+\|\mbox{curl} v\|_{H^k}+\|\nabla v\|_{L^2}\right).
		\end{equation}
	\end{lem}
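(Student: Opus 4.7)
\textbf{Proof plan for Lemma \ref{lem-ell}.} For part (i), the natural approach is to localize to a bounded neighborhood of $\partial\Omega$ and invoke the standard trace theorem. Choose a smooth cut-off function $\chi$ with $\chi\equiv 1$ near $\partial\Omega$ and $\mbox{supp}\,\chi\subset\Omega\cap B_{R'}$ for some large $R'$; then $|v|_{L^q(\partial\Omega)}=|\chi v|_{L^q(\partial\Omega)}$, and the standard trace $H^1(\Omega\cap B_{R'})\hookrightarrow H^{1/2}(\partial\Omega)\hookrightarrow L^q(\partial\Omega)$ in $\mathbb{R}^3$ is continuous for $q\in[2,4]$ (since the boundary is $2$-dimensional). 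This yields $|v|_{L^q(\partial\Omega)}\leq C\|\chi v\|_{H^1}\leq C(\|\nabla v\|_{L^2(\Omega)}+\|v\|_{L^2(\Omega\cap B_{R'})})$. The remaining $L^2$-piece is controlled by the Gagliardo–Nirenberg–Sobolev embedding $D^{1,2}(\Omega)\hookrightarrow L^6(\Omega)$ valid for functions vanishing at infinity, giving $\|v\|_{L^2(\Omega\cap B_{R'})}\leq C\|v\|_{L^6(\Omega)}\leq C\|\nabla v\|_{L^2(\Omega)}$, after a Hölder on the bounded cap.

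For part (ii), I would argue by induction on $k$. The base case is essentially Lemma \ref{lem-div-curl}: when $v\cdot n|_{\partial\Omega}=0$ the boundary seminorm $|v\cdot n|_{H^{s-1/2}(\partial\Omega)}$ vanishes, and analogously in the other case, reducing to the interior div-curl inequality with an $L^2$-term which, for our purposes, can be replaced by $\|\nabla v\|_{L^2}$ via Lemma \ref{prop-imp}. For the inductive step, differentiate tangentially along $\partial\Omega$: tangential derivatives preserve the boundary condition $v\cdot n=0$ (respectively $v\times n=0$) up to a curvature-type commutator of the form $(\nabla n)\cdot v$, which is lower order. Normal derivatives of $v$ at the boundary are recovered algebraically from $\mbox{div}\,v$ and $\mbox{curl}\,v$ evaluated on $\partial\Omega$ together with the already-controlled tangential derivatives. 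Iterating and applying Lemma \ref{lem-div-curl} to each $D^{\beta}v$ with $|\beta|=k$ bounds $\|\nabla^{k+1} v\|_{L^2}$ by $\|\mbox{div}\,v\|_{H^k}+\|\mbox{curl}\,v\|_{H^k}+\|\nabla^k v\|_{L^2}$, and the inductive hypothesis closes the estimate down to $\|\nabla v\|_{L^2}$.

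The main obstacle is twofold: keeping the constants independent of the truncation radius $R'$ in part (i), and bookkeeping the curvature commutators and boundary remainders in the induction of part (ii). The former is handled cleanly by bounding the bulk $L^2$ norm by the global $L^6$ bound, which is a purely interior inequality. The latter is the more delicate point: one must arrange the induction so that curvature corrections produced at each differentiation step always fall to strictly lower order, so that they are absorbed by the inductive bound. A natural organization is via a partition of unity separating regions near the boundary (where tangential/normal splittings and trace theorems are needed) from an interior region (where the classical Hodge-type identity $-\Delta v=\mbox{curl}\,\mbox{curl}\,v-\nabla\mbox{div}\,v$ directly gives the $H^{k+1}$ control), and then gluing the two estimates.
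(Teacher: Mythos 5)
The paper does not prove this statement at all: it is imported verbatim as Lemma~2.9 of \cite{CLL}, so there is no internal proof to compare your argument against. Judged on its own merits, your sketch is the standard and essentially correct route. Part (i) is exactly the expected localization: cut off near $\partial\Omega$, use the trace embedding $H^{1}\hookrightarrow H^{1/2}(\partial\Omega)\hookrightarrow L^{q}(\partial\Omega)$ for the two-dimensional boundary (which is precisely why $q\le 4$), and control the residual bulk $L^2$ norm on the bounded collar through $\|v\|_{L^{6}(\Omega)}\le C\|\nabla v\|_{L^{2}(\Omega)}$; this mirrors how the paper proves its own Proposition~\ref{prop-trace}.

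For part (ii) one point needs correcting. You claim the $\|v\|_{L^{2}}$ term produced by Lemma~\ref{lem-div-curl} ``can be replaced by $\|\nabla v\|_{L^2}$ via Lemma~\ref{prop-imp}.'' Lemma~\ref{prop-imp} bounds $\|\nabla v\|_{L^2}$ by $\|\mbox{div}\,v\|_{L^2}+\|\mbox{curl}\,v\|_{L^2}$; it says nothing about $\|v\|_{L^{2}}$, and in an exterior domain a $D^{1,2}$ field need not lie in $L^{2}$ at all, so that substitution is not legitimate as stated. The correct repair is the one you give in your final paragraph: the $\|v\|_{L^2}$ term only arises from the near-boundary piece of a partition of unity, where it is a norm over a bounded set and hence controlled by $\|v\|_{L^{6}(\Omega)}\le C\|\nabla v\|_{L^{2}(\Omega)}$, while on the far-field piece the Hodge identity $-\Delta v=\mbox{curl}\,\mbox{curl}\,v-\nabla\mbox{div}\,v$ gives the $H^{k+1}$ bound on all of $\mathbb{R}^{3}$ with no zeroth-order term. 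I would also note that your tangential-differentiation induction is more machinery than necessary: Lemma~\ref{lem-div-curl} is already stated for all $s\ge 1$, so with $v\cdot n|_{\partial\Omega}=0$ (or $v\times n|_{\partial\Omega}=0$) the boundary seminorm vanishes at every order and the only genuine work is the localization that trades $\|v\|_{L^2}$ for $\|\nabla v\|_{L^2}$. With that one step made precise, your argument closes.
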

	
In order to  handle the general Navier-slip  boundary condition, we give the following  estimates about the boundary integrals.
	\begin{prop}\label{prop-trace}
		Let $\Omega=\mathbb{R}^3\setminus B_{R}$, then there exists some positive constant $C^{*}$   independent of $R$ and $v$, such that
		\begin{align*}
			|v|^2_{L^2(\partial \Omega)}\leq C^{*} R\|\nabla v\|^2_{L^2(\Omega)}.
		\end{align*}
	\end{prop}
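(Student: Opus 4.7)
The plan is to exploit the special radial structure of $\Omega=\mathbb{R}^3\setminus B_R$ via spherical coordinates and the fundamental theorem of calculus. First I would reduce to the case of a smooth, compactly supported test function $v$ (the general statement then follows from the density of $C_c^\infty(\overline{\Omega})$ in the homogeneous space in which $\nabla v\in L^2(\Omega)$, together with the usual zero-at-infinity decay implicit in this setting; for functions with nontrivial limit at infinity one would first subtract the limit). Writing $x=s\omega$ with $s\geq R$, $\omega\in S^2$, the decay at infinity gives
\begin{equation*}
v(R\omega)=-\int_{R}^{\infty}\partial_{s}v(s\omega)\,ds.
\end{equation*}

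The main trick is the weighted Cauchy--Schwarz inequality that produces the correct power of $R$. Splitting $1=\frac{1}{s}\cdot s$ inside the integrand gives
\begin{equation*}
|v(R\omega)|^{2}\le \Bigl(\int_{R}^{\infty}\frac{1}{s^{2}}\,ds\Bigr)\Bigl(\int_{R}^{\infty}s^{2}|\partial_{s}v(s\omega)|^{2}\,ds\Bigr)=\frac{1}{R}\int_{R}^{\infty}s^{2}|\partial_{s}v(s\omega)|^{2}\,ds.
\end{equation*}
Integrating in $\omega\in S^{2}$ and recognizing $s^{2}\,ds\,d\omega$ as the volume element on $\Omega$ yields
\begin{equation*}
\int_{S^{2}}|v(R\omega)|^{2}\,d\omega\le \frac{1}{R}\int_{\Omega}|\partial_{r}v|^{2}\,dx\le \frac{1}{R}\|\nabla v\|_{L^{2}(\Omega)}^{2},
\end{equation*}
since the radial derivative is dominated by the full gradient.

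Finally, converting the $S^{2}$-integral into a boundary integral on $\partial\Omega=\partial B_R$ picks up the surface Jacobian $R^{2}$:
\begin{equation*}
|v|_{L^{2}(\partial\Omega)}^{2}=R^{2}\int_{S^{2}}|v(R\omega)|^{2}\,d\omega\le R\,\|\nabla v\|_{L^{2}(\Omega)}^{2},
\end{equation*}
so the constant $C^{*}=1$ is admissible and is manifestly independent of $R$ and $v$. There is no real obstacle here; the only point requiring slight care is the justification of the radial fundamental theorem of calculus, handled by a standard density/approximation argument on the homogeneous Sobolev class in which $\|\nabla v\|_{L^{2}(\Omega)}$ is finite.
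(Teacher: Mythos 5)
Your proof is correct, and it takes a genuinely different route from the paper's. The paper proves the estimate by taking the classical trace inequality on the fixed annulus $B_2\setminus B_1$, rescaling it to $B_{2R}\setminus B_R$ to track the $R$-dependence, and then absorbing the resulting volume terms with H\"older's inequality and the Sobolev embedding $\|v\|_{L^6(\Omega)}\le C\|\nabla v\|_{L^2(\Omega)}$. You instead integrate $\partial_s v$ along rays from infinity and apply the weighted Cauchy--Schwarz splitting $\partial_s v=\tfrac1s\cdot(s\,\partial_s v)$, which manufactures exactly the factor $\tfrac1R$ and the volume element $s^2\,ds\,d\omega$ in one stroke. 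Your argument is more elementary (no trace theorem, no Sobolev embedding), yields the explicit and sharper constant $C^*=1$, and only uses the radial derivative rather than the full gradient; on the other hand it is tied rigidly to the spherical geometry of $\partial B_R$, whereas the paper's rescaling scheme is the more standard template. Both proofs need the same implicit hypothesis that $v$ vanishes at infinity in the appropriate averaged sense: the paper encodes it through $\|v\|_{L^6}\le C\|\nabla v\|_{L^2}$, you through the validity of the radial fundamental theorem of calculus, and your closing remark about density in the homogeneous class (and the a.e.-$\omega$ integrability of $\partial_s v$ along rays, which your own Cauchy--Schwarz bound supplies) adequately covers this point.
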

	\begin{proof}
		Using the classical Sobolev trace estimate, we have
		\begin{align*}
			\int_{\partial B_1}v^2(y)d\sigma_{y}\leq C_1\int_{B_2\setminus B_1}v^2(y)dy+C_2\int_{B_2\setminus B_1}|\nabla v^2(y)|dy.
		\end{align*}
		By a scaling and H$\ddot{o}$lder inequality, it holds
		\begin{align*}
			\int_{\partial B_R}v^2(x)d\sigma_{x}&\leq C_1R^{-1}\int_{B_{2R}\setminus B_{R}}v^2(x)dx+C_2\int_{B_{2R}\setminus B_{R}}|\nabla v^2(x)|dx\\[2mm]
			&\leq C_1R \left(\int_{B_{2R}\setminus B_{R}}v^6(x)dx\right)^{\frac{1}{3}}
			+C_2R\left(\int_{B_{2R}\setminus B_{R}}v^6(x)dx\right)^{\frac{1}{6}}\left(\int_{B_{2R}\setminus B_{2R}}|\nabla v|^2dx\right)^{\frac{1}{2}}\\[2mm]
			&\leq C_1 R \left(\int_{\Omega}v^6(x)dx\right)^{\frac{1}{3}}
			+C_2R\left(\int_{\Omega}v^6(x)dx\right)^{\frac{1}{6}}\left(\int_{\Omega}|\nabla v|^2dx\right)^{\frac{1}{2}}\\[2mm]
			&\leq C R \|\nabla v\|^2_{L^2(\Omega)},
		\end{align*}
		where we have used $\|v\|_{L^6(\Omega)}\leq C\|\nabla v\|_{L^2(\Omega)}$.
	\end{proof}

\begin{prop}\label{lem-boundary-key}
Assume that $\Omega$ is an exterior domain of the bounded  domain in $\mathbb{R}^3$
 with smooth boundary. Let $\nabla v\in L^{2}(\Omega)$ with $v\cdot n=0$ on $\partial\Omega$. It
holds for $\nabla f \in L^{2}$,
\begin{align*}
\left|\int_{\partial\Omega}v\cdot\nabla f\right|\leq \|\nabla v\|_{L^2}\|\nabla f\|_{L^2}.
\end{align*}
\end{prop}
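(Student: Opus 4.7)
The plan is to convert the surface integral into a controllable volume integral via the divergence theorem. A direct application of Cauchy--Schwarz on $\partial\Omega$ would require boundary norms such as $\|v\|_{L^2(\partial\Omega)}$ and $\|\nabla f\|_{L^2(\partial\Omega)}$, which demand more regularity than the hypotheses $\nabla v,\nabla f\in L^2(\Omega)$ provide, so the volume route is necessary. The slip condition $v\cdot n=0$ will be used crucially to kill an intermediate boundary term that would otherwise force second derivatives of $f$ into the estimate.

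Concretely, I fix a smooth vector field $N$ on $\overline\Omega$ of compact support near $\partial\Omega$ such that $N|_{\partial\Omega}=n$; then $N\cdot n\equiv 1$ on $\partial\Omega$. The divergence theorem on $\Omega$ gives
\[
\int_{\partial\Omega} v\cdot\nabla f\,d\sigma
=\int_{\partial\Omega}(v\cdot\nabla f)(N\cdot n)\,d\sigma
=\int_{\Omega}\mathrm{div}\bigl[(v\cdot\nabla f)\,N\bigr]\,dx,
\]
and expanding the divergence produces three volume terms, only the last of which contains second derivatives of $f$:
\[
\int_{\Omega}\mathrm{div}\bigl[(v\cdot\nabla f)\,N\bigr]\,dx
=\int_{\Omega}(v\cdot\nabla f)\,\mathrm{div}\,N\,dx
+\int_{\Omega}\bigl((N\cdot\nabla)v\bigr)\cdot\nabla f\,dx
+\int_{\Omega}N_k v_i\,\partial_i\partial_k f\,dx.
\]
For the last term I exploit the symmetry $\partial_i\partial_k f=\partial_k\partial_i f$ and integrate by parts in the $i$-variable against $v_i$. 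This produces $-\int_{\Omega}(v\cdot\nabla N)\cdot\nabla f\,dx-\int_{\Omega}(\mathrm{div}\,v)(N\cdot\nabla f)\,dx$ together with a surface contribution $\int_{\partial\Omega}(v\cdot n)(N\cdot\nabla f)\,d\sigma$, which vanishes precisely because $v\cdot n=0$ on $\partial\Omega$.

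Collecting, $\int_{\partial\Omega}v\cdot\nabla f\,d\sigma$ equals a sum of volume integrals, each either linear in $\nabla v$ (or $\mathrm{div}\,v$) times $\nabla f$, or linear in $v$ times a derivative of $N$ times $\nabla f$. Terms of the first kind are bounded directly by $\|N\|_{L^\infty}\|\nabla v\|_{L^2}\|\nabla f\|_{L^2}$. For terms of the second kind I use that $\nabla N$ and $\mathrm{div}\,N$ are bounded and supported on a bounded set $K\subset\Omega$; H\"older's inequality on $K$ together with the Sobolev embedding $\|v\|_{L^6(\Omega)}\le C\|\nabla v\|_{L^2(\Omega)}$ from Lemma~\ref{sobolev inequ}(iii) yields the same bound $C\|\nabla v\|_{L^2}\|\nabla f\|_{L^2}$, with a constant $C$ depending on $\Omega$ that is implicit in the statement.

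The main obstacle is the second integration by parts: one must carefully choose the direction of integration so that the emerging surface term carries the factor $v\cdot n$, which is exactly what keeps the Hessian of $f$ out of the final estimate. A small density argument (starting from smooth, compactly supported $v$ and $f$, then passing to the limit using the $L^6$-control of $v$) justifies the identity under the general hypothesis $\nabla v,\nabla f\in L^2$.
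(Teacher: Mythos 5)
Your argument is correct, but it is genuinely different from the paper's. The paper exploits the three\-dimensional cross\-product structure: since $v\cdot n=0$ on $\partial\Omega$, one can write $v=v^{\bot}\times n$ with $v^{\bot}=-v\times n$, use the cyclic identity to turn the integrand into $(\nabla f\times v^{\bot})\cdot n$, apply the divergence theorem, and then the identity $\mathrm{div}(\nabla f\times v^{\bot})=-\nabla f\cdot\mathrm{curl}\,v^{\bot}$ (the term $v^{\bot}\cdot\mathrm{curl}\nabla f$ vanishing) reduces everything to a single volume integral bounded by $\|\nabla f\|_{L^2}\|\nabla v\|_{L^2}$. Your route instead extends the normal to a compactly supported field $N$, inserts $N\cdot n=1$, expands $\mathrm{div}[(v\cdot\nabla f)N]$, and removes the Hessian of $f$ by a second integration by parts whose surface term carries the factor $v\cdot n=0$ --- the same structural role that $\mathrm{curl}\nabla f=0$ plays in the paper. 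What each buys: the paper's proof is shorter and nominally yields the constant $1$ appearing in the statement, though it quietly relies on extending $v^{\bot}$ off the boundary so that $\|\mathrm{curl}\,v^{\bot}\|_{L^2}\le\|\nabla v\|_{L^2}$ makes sense; your proof is dimension\-independent and more explicit about the extension, at the cost of a constant $C(\Omega)$ depending on $N$ (and of invoking $\|v\|_{L^6}\le C\|\nabla v\|_{L^2}$ for the terms carrying $\nabla N$ and $\mathrm{div}\,N$, which requires $v$ to vanish at infinity, as in Lemma \ref{lem-ell}). Since every application of the proposition in the paper absorbs the constant into a generic $C$, your weaker constant is harmless, and you flag it appropriately.
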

\begin{proof}
Denote $v^{\bot}=-v\times n$, it follows from the boundary condition $v\cdot n|_{\pl\Omega}=0$ that
\begin{equation*}
v=v^{\bot}\times n\quad \mbox{on}\quad \pl\Omega.
\end{equation*}
Therefore, by using some simple identities about vector fields and the divergence theorem, we
conclude that
\begin{align*}
\int_{\partial\Omega}v\cdot\nabla f&=\int_{\partial\Omega}v^{\bot}\times n\cdot\nabla f
=\int_{\partial\Omega}(\nabla f\times v^{\bot})\cdot n\\[2mm]
&=\int_{\Omega}\mbox{div}(\nabla f\times v^{\bot})
=-\int_{\Omega}\nabla f\times \mbox{curl}v^{\bot}\\[2mm]
&\leq \|\nabla f\|_{L^2}\|\nabla v\|_{L^2}.
\end{align*}
\end{proof}
	Next, we prove some elliptic estimates for elliptic equations with Navier-slip or Neumann boundary conditions in the exterior domain $\Omega$ by using Lemmas \ref{lem-div-curl}-\ref{lem-ell}, which will be frequently used in the proofs of the main results later.

\begin{lem} \label{lem-elliptic-1}
Assume that $\Omega$ is an exterior domain of the simply connected domain  in $\mathbb{R}^3$
 with smooth boundary, denote  $n$ by the outward normal of $\pl\Omega$.
Suppose that $u$ satisfy the Lam\'{e} equation
	\begin{equation}\label{equ-u}
		\left\{
		\begin{array}{llll}
			-\mu\Delta u-(\mu+\lambda)\nabla\mbox{div}u=g,\quad \mbox{in}\quad \Omega,\\[2mm]
			u\cdot n=0,\quad \mbox{curl} u\times n=K(x)u\quad \mbox{on}\quad \partial\Omega
		\end{array}
		\right.
	\end{equation}
There exists a positive constant $C$ depending only on $\mu,\lambda,\Omega$ and the matrix $K$ such that it holds
\begin{equation*}
	\|\nabla^2u\|_{H^{s}(\Omega)}\leq C\big(\|g\|_{H^{s}(\Omega)}+\| \nabla u\|_{L^{2}(\Omega)}\big),
\end{equation*}
where $s=0,1$.
\end{lem}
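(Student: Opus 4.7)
The plan is to use the identity $-\Delta u = \mbox{curl}\,\mbox{curl}\,u - \nabla\mbox{div}\,u$ to recast the Lam\'e equation in the form
\[
\mu\,\mbox{curl}\,\mbox{curl}\,u - (2\mu+\lambda)\nabla\mbox{div}\,u = g.
\]
Since $u\cdot n|_{\pl\Omega}=0$, Lemma \ref{lem-ell} gives
\[
\|\nabla u\|_{H^{s+1}} \leq C\bigl(\|\mbox{div}\,u\|_{H^{s+1}} + \|\mbox{curl}\,u\|_{H^{s+1}} + \|\nabla u\|_{L^2}\bigr),
\]
so it suffices to estimate $\mbox{div}\,u$ and $\mbox{curl}\,u$ in $H^{s+1}$.

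For $s=0$, I would test the rewritten equation with $-\nabla\mbox{div}\,u$. The cross term is handled by integration by parts using the identity $\mbox{curl}\,A\cdot B = A\cdot\mbox{curl}\,B + \mbox{div}(A\times B)$ with $A=\mbox{curl}\,u$, $B=\nabla\mbox{div}\,u$; since $\mbox{curl}(\nabla\mbox{div}\,u)=0$, this produces the boundary integral
\[
\int_\Omega \mbox{curl}\,\mbox{curl}\,u \cdot \nabla\mbox{div}\,u = -\int_{\pl\Omega} (\mbox{curl}\,u\times n)\cdot\nabla\mbox{div}\,u = -\int_{\pl\Omega} Ku\cdot\nabla\mbox{div}\,u.
\]
Because $K=2S(n)-\alpha I$ maps the tangent space into itself, $Ku$ is tangential on $\pl\Omega$, hence Proposition \ref{lem-boundary-key} applies with $v=Ku$ and $f=\mbox{div}\,u$, giving
\[
\bigl|\textstyle\int_{\pl\Omega}Ku\cdot\nabla\mbox{div}\,u\bigr| \leq \|\nabla(Ku)\|_{L^2}\|\nabla\mbox{div}\,u\|_{L^2} \leq C\|\nabla u\|_{L^2}\|\nabla\mbox{div}\,u\|_{L^2}.
\]
A Cauchy--Schwarz and absorption then yield $\|\nabla\mbox{div}\,u\|_{L^2}\leq C(\|g\|_{L^2}+\|\nabla u\|_{L^2})$. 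Testing the same equation with $\mbox{curl}\,\mbox{curl}\,u$ and repeating this boundary analysis gives $\|\mbox{curl}\,\mbox{curl}\,u\|_{L^2} \leq C(\|g\|_{L^2}+\|\nabla u\|_{L^2})$. Applying Lemma \ref{lem-div-curl} to $v=\mbox{curl}\,u$ (which satisfies $\mbox{div}\,\mbox{curl}\,u=0$ and $\mbox{curl}\,u\times n=Ku$, with $|Ku|_{H^{1/2}(\pl\Omega)}\lesssim \|Ku\|_{H^1}\lesssim\|\nabla u\|_{L^2}$ by trace) promotes this to $\|\mbox{curl}\,u\|_{H^1}\leq C(\|g\|_{L^2}+\|\nabla u\|_{L^2})$. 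Together with the reduction via Lemma \ref{lem-ell}, this closes the $s=0$ estimate.

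For $s=1$, the strategy is to differentiate in a tangential direction $\pl_\tau$: the function $\pl_\tau u$ solves a Lam\'e system with source $\pl_\tau g$ plus commutator terms, and the boundary conditions $u\cdot n=0$, $\mbox{curl}\,u\times n=Ku$ are preserved up to curvature contributions from $\nabla n$ and $\nabla K$. Applying the $s=0$ estimate to $\pl_\tau u$ handles all tangential components of $\nabla^3 u$; the remaining purely normal derivative $\pl_n^2\pl_n u$ is recovered by solving for $\pl_n\mbox{div}\,u$ and $\pl_n\mbox{curl}\,u$ algebraically from the equation itself, together with one more application of Lemma \ref{lem-ell}. The main obstacle I expect is the careful accounting of the $s=1$ boundary terms: the commutator $[\pl_\tau,K]u=(\pl_\tau K)u$ lives only on $\pl\Omega$, so one must use Proposition \ref{prop-trace} and the already-controlled $\|\mbox{div}\,u\|_{H^1},\|\mbox{curl}\,u\|_{H^1}$ to show that these lower-order curvature contributions are genuinely absorbable into $\|g\|_{H^1}+\|\nabla u\|_{L^2}$ rather than into unknown third derivatives of $u$.
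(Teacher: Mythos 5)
Your $s=0$ argument is essentially the paper's proof: rewrite the Lam\'e operator as $\mu\,\mbox{curl}^2u-(2\mu+\lambda)\nabla\mbox{div}u$, test against $\nabla\mbox{div}u$ and against $\mbox{curl}^2u$, handle the one boundary integral with Proposition \ref{lem-boundary-key} applied to $v=Ku$, $f=\mbox{div}u$, and then upgrade to $\|\nabla^2u\|_{L^2}$ via Lemma \ref{lem-div-curl} applied to $\mbox{curl}u$ together with Lemma \ref{lem-ell}. One small caveat: your inequality $|Ku|_{H^{1/2}(\partial\Omega)}\lesssim\|Ku\|_{H^1}\lesssim\|\nabla u\|_{L^2}$ is not quite right as stated, since $\|u\|_{H^1}$ contains $\|u\|_{L^2}$, which is not controlled in an exterior domain; you need the homogeneous trace estimate \eqref{est-boundary} (or the interpolation $\|\nabla u\|_{L^2}^{1/2}\|\nabla^2u\|_{L^2}^{1/2}$ used in the paper, with the $\|\nabla^2u\|_{L^2}$ factor absorbed).

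For $s=1$ you diverge from the paper, and here your route is both harder and incomplete. The paper's key observation is that applying $\mbox{curl}$ to the rewritten equation annihilates the $\nabla\mbox{div}u$ term and gives the pointwise identity $\mu\,\mbox{curl}^3u=\mbox{curl}\,g$, so $\|\mbox{curl}^3u\|_{L^2}\leq C\|\nabla g\|_{L^2}$ with no boundary terms whatsoever; the remaining piece $\|\nabla^2\mbox{div}u\|_{L^2}$ is obtained by testing $\nabla_i(\mbox{eq})^j$ against $\nabla_{ij}\mbox{div}u$ and invoking the bound on $\|\mbox{curl}^2u\|_{H^1}$ (which follows from Lemma \ref{lem-div-curl} applied to $\mbox{curl}^2u$ together with the boundary estimate \eqref{2ubd} for $|\mbox{curl}^2u\cdot n|_{H^{1/2}(\partial\Omega)}$). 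Your alternative --- tangential differentiation, commutators with $K$ and with the curvature, and algebraic recovery of the normal derivatives --- is a legitimate general strategy, but as written it is a plan rather than a proof: the tangential vector fields only exist in a collar of $\partial\Omega$, so you must localize and add interior estimates; the commutator $(\partial_\tau K)u$ and the curvature contributions must actually be shown to land in $\|g\|_{H^1}+\|\nabla u\|_{L^2}$ rather than in uncontrolled third derivatives, and you explicitly defer exactly this step. I would recommend replacing your $s=1$ argument by the curl-of-the-equation identity, which eliminates the commutator bookkeeping entirely and is the reason the paper's proof closes in a few lines.
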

	
	Note the standard elliptic estimates for Lam\'{e} equation in bounded smooth domains can be seen in many literature. We give the proof in the setting of exterior domains here for the sake of completeness.
	
	\begin{proof}
		Firstly, applying Lemma \ref{lem-div-curl} with $v=\mbox{curl}u$ and using the boundary condition $\mbox{curl} u\times n=K(x)u$, we deduce
		\begin{align}\label{1}
			\|\mbox{curl}u\|_{H^1}&\leq C\left(\|\mbox{curl}^2u\|_{L^2}+|\mbox{curl}u\times n|_{H^{\frac{1}{2}}(\partial\Omega)}+\|\mbox{curl}u\|_{L^2}\right)\nonumber\\[2mm]
			&\leq C\left(\|\mbox{curl}^2u\|_{L^2}+|Ku|_{H^{\frac{1}{2}}(\partial\Omega)}+\|\mbox{curl}u\|_{L^2}\right)\nonumber\\[2mm]
			&\leq C\left(\|\nabla u\|^{1/2}_{L^2}\|\nabla^2 u\|^{1/2}_{L^2}+\|\mbox{curl}^2u\|_{L^2}+\|\mbox{curl}u\|_{L^2}\right)\nonumber\\[2mm]
			&\leq \varepsilon \|\nabla^2u\|_{L^2}+C\left(\|\nabla u\|_{L^2}+\|\mbox{curl}^2u\|_{L^2}\right),
		\end{align}
		for any $\varepsilon>0$.
		Moreover, applying Lemma \ref{lem-ell} with $v=u$, and using (\ref{1}), it holds
		\begin{align*}
			\|\nabla u\|_{H^1}&\leq C\left(\|\mbox{div}u\|_{H^1}+\|\mbox{curl} u\|_{H^1}+\|\nabla u\|_{L^2}\right)\nonumber\\[2mm]
			&\leq \varepsilon \|\nabla^2 u\|_{L^2}+C\left(\|\nabla\mbox{div}u\|_{L^2}+\|\mbox{curl}^2u\|_{L^2}+\|\nabla u\|_{L^2}\right).
		\end{align*}
		Therefore, we deduce that
		\begin{align}\label{2u}
			\|\nabla^2 u\|_{L^2}\leq C\left(\|\nabla\mbox{div}u\|_{L^2}+\|\mbox{curl}^2u\|_{L^2}+\|\nabla u\|_{L^2}\right),
		\end{align}
		by choosing $\varepsilon=\frac{1}{2}$.

		Next, we estimate $\|\nabla^3u\|_{L^2}$.  Applying Lemma \ref{lem-div-curl} with $v=\mbox{curl}u$ in (\ref{A3-2}) and $v=\mbox{curl}^2u$ in (\ref{A3-1}), we have
		\begin{align*}
			\|\mbox{curl} u\|_{H^2}&\leq C\left(\|\mbox{curl}^2u\|_{H^1}+|\mbox{curl}u\times n|_{H^{\frac{3}{2}}(\partial\Omega)}+\|\mbox{curl}u\|_{L^2}\right)\nonumber\\[2mm]
			&\leq C \left(\|\mbox{curl}^3u\|_{L^2}+|\mbox{curl}u^2\cdot n|_{H^{\frac{1}{2}}(\partial\Omega)}+\|\mbox{curl}^2u\|_{L^2}+|\mbox{curl}u\times n|_{H^{\frac{3}{2}}(\partial\Omega)}+\|\mbox{curl}u\|_{L^2}\right).
		\end{align*}
		Then, using Lemma \ref{lem-ell} with $v=u$, we get
		\begin{align}\label{uH3}
			&\|\nabla u\|_{H^2}\leq C\left(\|\mbox{div}u\|_{H^2}+\|\mbox{curl} u\|_{H^2}+\|\nabla u\|_{L^2}\right)\nonumber\\[2mm]
			&\leq C\left(\|\nabla^2\mbox{div}u\|_{L^2}+\|\mbox{curl}^3 u\|_{L^2}+\|\nabla u\|_{H^1}+|\mbox{curl}u^2\cdot n|_{H^{\frac{1}{2}}(\partial\Omega)}+|\mbox{curl}u\times n|_{H^{\frac{3}{2}}(\partial\Omega)}\right).
		\end{align}
		It remains to control the boundary integrals.

		Note that we can use the identities
		\begin{equation*}
			\nabla \cdot (\mbox{curl} u \times n) + \mbox{curl} u \cdot (\nabla \times n)= \mbox{curl}^2 u \cdot n,
		\end{equation*}
		and we can rewrite
		\begin{equation*}
			\mbox{div}(\mbox{curl} u \times n) = \pl_n (\mbox{curl} u \times n) \cdot n + (\Pi \pl_{i}(\mbox{curl} u \times n))^i,
		\end{equation*}
		with $\pl_n((\mbox{curl} u \times n) \cdot n)=0=\pl_n (\mbox{curl} u \times n) \cdot n + (\mbox{curl} u \times n) \cdot \pl_n n$ and $\Pi(\cdot)$ means the projection on vector fields tangent to the boundary that is $\Pi=Id-n \otimes n $. Therefore, we have
		\begin{equation*}
			|\mbox{curl}^2u\cdot n|_{H^{\frac{1}{2}}(\partial\Omega)} \leq C\left(|\mbox{curl} u|_{H^{\frac{1}{2}}(\partial\Omega)} + |K u|_{H^{\frac{3}{2}}(\partial\Omega)}\right)
		\end{equation*}
		from applying the boundary condition $(\mbox{curl}u \times n)|_{\pl \Omega}= Ku$.
		By using the trace theorem (\ref{est-boundary}), we obtain
		\begin{align*}
			|\mbox{curl}u\times n|_{H^{\frac{3}{2}}(\partial\Omega)}=|K u|_{H^{\frac{3}{2}}(\partial\Omega)}
			\leq \varepsilon \|\nabla^3 u\|_{L^2}+ C\left(\|\nabla u\|_{L^2}+\|\nabla^2 u\|_{L^2}\right),
		\end{align*}
		and
		\begin{align}\label{2ubd}
			|\mbox{curl}^2u\cdot n|_{H^{\frac{1}{2}}(\partial\Omega)}
			&\leq C(|\mbox{curl} u|_{H^{\frac{1}{2}}(\partial\Omega)} + |K u|_{H^{\frac{3}{2}}(\partial\Omega)})\nonumber\\[2mm]
			&\leq \varepsilon \|\nabla^3 u\|_{L^2}+C\left(\|\nabla u\|_{L^2}+\|\nabla^2 u\|_{L^2}\right).
		\end{align}
		Finally, putting the estimates of the boundary integrals into \eqref{uH3} and choosing $\varepsilon$ small enough, we get
		\begin{align}\label{est-u3}
			\|\nabla^3 u\|_{L^2}\leq C \left(\| \nabla^2 \mbox{div} u\|_{L^2}+\|\mbox{curl}^3u\|_{L^2}+\|\nabla^2 u\|_{L^2}+\|\nabla u\|_{L^2}\right).
		\end{align}

		With the help of  \eqref{2u} and \eqref{est-u3}, we should prove the elliptic estimates for the solutions of \eqref{equ-u}.
		Rewriting the Lam\'{e} equation as
		\begin{align}\label{section2-1}
			\mu\mbox{curl}^2u-(2\mu+\lambda)\nabla\mbox{div}u=g.
		\end{align}
		Multiplying  (\ref{section2-1}) by $\nabla\mbox{div}u$, it holds
		\begin{align*}
			 (2\mu+\lambda)\|\nabla\mbox{div}u\|_{L^2}^2=\mu\int_{\Omega}\mbox{curl}^2u\cdot\nabla\mbox{div}u-\int_{\Omega}g\cdot\nabla{div}u
		\end{align*}
		Integrating by parts with the boundary condition $(\mbox{curl}u \times n)|_{\partial\Omega}=K(x)u$, and applying Proposition \ref{lem-boundary-key} with $v=K(x)u$ and $f=\mbox{div} u$ to deal with the boundary term, one has
		\begin{align}\label{section2-3}
			\int_{\Omega}\mbox{curl}^2u\cdot\nabla\mbox{div}udx&=- \int_{\partial\Omega}(\mbox{curl}u\times n)\cdot\nabla\mbox{div}u
			=- \int_{\partial\Omega}(K(x)u)_{\tau}\cdot\nabla\mbox{div}u\nonumber\\[2mm]
			&\leq C\|\nabla\mbox{div}u\|_{L^2}\|\nabla(K(x)u)\|_{L^2}\leq \epsilon \|\nabla\mbox{div}u\|_{L^2}^2+C\|\nabla u\|_{L^2}^2,
		\end{align}
		which yields
		\begin{align}\label{section2-4}
			\|\nabla\mbox{div}u\|_{L^2}^2\leq C\|\nabla u\|_{L^2}^2+\|g\|_{L^2}^2.
		\end{align}
		Similarly, multiplying (\ref{section2-1}) by $\mbox{curl}^2u$,  and using (\ref{section2-3}) we have
		\begin{align}\label{section2-4-2}
			 \mu\|\mbox{curl}^2u\|_{L^2}^2&=\int_{\Omega}(2\mu+\lambda)\mbox{curl}^2u\cdot\nabla\mbox{div}u+\int_{\Omega}g\cdot\mbox{curl}^2u\nonumber\\[2mm]
			&\leq \epsilon \|\nabla\mbox{div}u\|_{L^2}^2+\frac{\mu}{2}\|\mbox{curl}^2u\|_{L^2}^2+C\|g\|_{L^2}^2.
		\end{align}
		Adding (\ref{section2-4}) and (\ref{section2-4-2}), and choosing $\epsilon$ small yields that
		\begin{align}\label{section2-5}
			\|\mbox{curl}^2u\|_{L^2}^2+\|\nabla\mbox{div}u\|_{L^2}^2 \leq C\left(\|\nabla u\|_{L^2}^2+\|g\|_{L^2}^2\right),
		\end{align}
		which together with  (\ref{2u})  immediately implies that
		\begin{align} \label{2ue}
			\|\nabla^2 u\|_{L^2}\leq C\left(\|\nabla u\|_{L^2}^2+\|g\|_{L^2}^2\right).
		\end{align}
		
Next, taking operator $\mbox{curl}$ to equation (\ref{section2-1}), then multiplying  the resulted equation  by $\mbox{curl}^3 u$, one obtains
		\begin{align}\label{section2-6}
			\mu \|\mbox{curl}^3 u \|^2_{L^2} = \int_{\Omega} \mbox{curl}g \cdot \mbox{curl}^3 u  \leq \frac{\mu}{2} \|\mbox{curl}^3 u \|^2_{L^2} + C \|\nabla g\|^2_{L^2},
		\end{align}
	which implies 	
\begin{equation*}
 \|\mbox{curl}^3 u \|^2_{L^2} \leq C \|\nabla g\|^2_{L^2}.
 \end{equation*}

Furthermore, computing  $\int_{\Omega}\nabla_{i} (\ref{section2-1})^j\cdot \nabla_{ij}\mbox{div}u$, and  applying Lemma \ref{lem-div-curl} and (\ref{section2-6}), (\ref{2ubd}), (\ref{2ue}), we get
		\begin{align*}
			(2\mu + \lambda) \|\nabla^2 \mbox{div} u \|^2_{L^2} &= \mu \int_{0}^{t} \nabla \mbox{curl}^2 u \cdot \nabla^2 \mbox{div}u - \nabla g \cdot  \nabla^2 \mbox{div}u \nonumber\\[2mm]
			&\leq \epsilon \|\nabla^2 \mbox{div}u \|^2_{L^2} + C (\|\nabla g\|^2_{L^2}+ \|\mbox{curl}^2 u \|^2_{H^1})\nonumber\\[2mm]
			&\leq \epsilon \|\nabla^2 \mbox{div}u \|^2_{L^2} + C (\|\nabla g\|^2_{L^2}+ \|\mbox{curl}^3 u \|^2_{L^2}+ |\mbox{curl}^2 u \cdot n|_{H^{\frac{1}{2}}(\pl \Omega)}+ \|\mbox{curl}^2 u \|^2_{L^2}),
		\end{align*}
		where we can choose $\epsilon=\mbox{min}\{\frac{2\mu+\lambda}{4}, \frac{\mu}{4}\}$ to get $\|\nabla^3 u  \|^2_{L^2} \leq C (\|g\|^2_{H^1}+\| \nabla u \|^2_{L^2})$, which ends the proof.
	\end{proof}

	\begin{lem}\label{lem-neu}
	Assume $\Omega$ is the same as in Lemma \ref{lem-elliptic-1}, and $\phi$ satisfies
		\begin{equation}\label{Neumann}
			\left\{
			\begin{array}{lll}
				\Delta \phi= q \quad \mbox{in}\quad \Omega\\[2mm]
				\nabla\phi\cdot n=0\quad \mbox{on}\quad \partial\Omega
			\end{array}
			\right.
		\end{equation}
		Then exists a positive constant $C$ depending only on $\Omega$ such that
		\begin{equation}\label{phi-1}
			\|\nabla^2\phi\|_{L^2}\leq C \|q\|_{L^2},
		\end{equation}
		\begin{equation}\label{phit}
			\|\nabla^2\phi_t\|_{L^2}\leq C \|q_t\|_{L^2},
		\end{equation}
		\begin{equation}\label{phitt}
			\|\nabla^{2}\phi_{tt}\|_{L^2}\leq C\|q_{tt}\|_{L^2},
		\end{equation}
		and
		\begin{align}\label{phi-3}
			\|\nabla^3\phi\|_{H^s}\leq C\left(\|\nabla q\|_{H^s}+\|q\|_{L^2}\right),
		\end{align}
		for $s=0,1$.
	\end{lem}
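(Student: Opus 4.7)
The plan is to reduce every estimate to a div--curl identity for $v=\nabla\phi$ (or its time derivatives), and then invoke the refined div--curl estimate of Lemma \ref{prop-imp} together with Lemma \ref{lem-ell}. The crucial observation is that for the gradient field $v=\nabla\phi$ we have the identities
\begin{equation*}
\mathrm{div}\, v = \Delta\phi = q,\qquad \mathrm{curl}\, v = \mathrm{curl}(\nabla\phi) = 0,
\end{equation*}
while the Neumann boundary condition is precisely $v\cdot n = 0$ on $\partial\Omega$. Since $\Omega=\mathbb{R}^3\setminus B_R$ has vanishing first Betti number, Lemma \ref{prop-imp} is available and gives directly
\begin{equation*}
\|\nabla^2\phi\|_{L^2} = \|\nabla v\|_{L^2} \leq C\bigl(\|\mathrm{div}\, v\|_{L^2}+\|\mathrm{curl}\, v\|_{L^2}\bigr) = C\|q\|_{L^2},
\end{equation*}
which is \eqref{phi-1}.

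For \eqref{phit} and \eqref{phitt}, I would differentiate the equation and the boundary condition in time. Since $n$ is time-independent, $\nabla\phi\cdot n=0$ on $\partial\Omega$ implies $\nabla\phi_t\cdot n=0$ and $\nabla\phi_{tt}\cdot n=0$ on $\partial\Omega$, while $\Delta\phi_t=q_t$ and $\Delta\phi_{tt}=q_{tt}$ in $\Omega$. Hence the very same argument applied to $v=\nabla\phi_t$ and $v=\nabla\phi_{tt}$ yields the two estimates. The only point that needs a brief justification is that $\nabla\phi_t$ and $\nabla\phi_{tt}$ still lie in the class where Lemma \ref{prop-imp} applies; this is where the far-field decay built into the solution space for the Poisson problem is used.

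For the higher-order estimate \eqref{phi-3}, I would apply Lemma \ref{lem-ell} to $v=\nabla\phi$ with $k=s+1$, which gives
\begin{equation*}
\|\nabla^2\phi\|_{H^{s+1}} \leq C\bigl(\|\mathrm{div}(\nabla\phi)\|_{H^{s+1}} + \|\mathrm{curl}(\nabla\phi)\|_{H^{s+1}} + \|\nabla^2\phi\|_{L^2}\bigr) = C\bigl(\|q\|_{H^{s+1}}+\|\nabla^2\phi\|_{L^2}\bigr).
\end{equation*}
Inserting \eqref{phi-1} to absorb the last term, and noting that $\|q\|_{H^{s+1}} \leq C(\|\nabla q\|_{H^s}+\|q\|_{L^2})$, we obtain
\begin{equation*}
\|\nabla^3\phi\|_{H^s} \leq \|\nabla^2\phi\|_{H^{s+1}} \leq C\bigl(\|\nabla q\|_{H^s}+\|q\|_{L^2}\bigr),
\end{equation*}
which is exactly \eqref{phi-3} for $s=0,1$.

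The only delicate point—what I expect to be the main (mild) obstacle—is verifying at each step that $v=\nabla\phi$, $\nabla\phi_t$, $\nabla\phi_{tt}$ sit in the function class required by Lemmas \ref{prop-imp} and \ref{lem-ell} (in particular, the decay $v(x)\to 0$ as $|x|\to\infty$ and the regularity $v\in D^{k+1,2}\cap D^{1,2}$). This follows from the standard solvability theory for the Neumann Poisson problem in exterior domains under the compatibility condition \eqref{condrho}, but should be stated explicitly so that the application of the two lemmas is legitimate. Once that is granted, the entire lemma is a clean consequence of the two div--curl results already proved.
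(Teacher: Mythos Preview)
Your proof is correct and follows essentially the same route as the paper. For \eqref{phi-1}--\eqref{phitt} the paper does exactly what you do: apply Lemma~\ref{prop-imp} to $v=\nabla\phi$, $\nabla\phi_t$, $\nabla\phi_{tt}$, using $\mathrm{curl}\,v=0$ and $v\cdot n|_{\partial\Omega}=0$. For \eqref{phi-3} there is a minor cosmetic difference: the paper rewrites the system for $v=\nabla\phi$ as $\Delta v=\nabla q$ with $v\cdot n=0$, $\mathrm{curl}\,v\times n=0$ on $\partial\Omega$ and invokes Lemma~\ref{lem-elliptic-1} (the Lam\'e estimate with $K=0$), obtaining $\|\nabla^2 v\|_{H^s}\le C(\|\nabla q\|_{H^s}+\|\nabla v\|_{L^2})$ directly; you instead apply Lemma~\ref{lem-ell} with $k=s+1$, which yields the same bound after noting $\|q\|_{H^{s+1}}\le C(\|\nabla q\|_{H^s}+\|q\|_{L^2})$. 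Since Lemma~\ref{lem-elliptic-1} is itself built on Lemma~\ref{lem-ell}, the two arguments are effectively equivalent; yours is slightly more direct.
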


	Note the standard elliptic estimates for $\Delta \phi=q$ on the exterior domain  $\Omega$ with $\nabla\phi\cdot n|_{\partial\Omega}=0$:
	\begin{equation}\label{sobineq}
		\|\nabla^{m+2}\phi\|^2\leq C\left(\|q\|_m^2+\|\nabla\phi\|^2\right), \,\,\  m=0,1,2.
	\end{equation}
	One can refer to \cite{brezis} for proof. However, for the Navier-Stokes-Poisson system in exterior domains, $\|\nabla\phi\|_{L^2_t(L^2)}$ can't be obtained directly. To overcome this difficulty, we establish some refined elliptic estimates of $\phi$  here.
	\begin{proof}
		Applying Lemma \ref{prop-imp} to $\nabla\phi$, $\nabla \phi_{t}$, $\nabla\phi_{tt}$ with the boundary conditions $\nabla\phi\cdot n|_{\partial\Omega}=0$, $\nabla\phi_t\cdot n|_{\partial\Omega}=0$ and $\nabla\phi_{tt}\cdot n|_{\partial\Omega}=0$ respectively,  using the equation $\Delta\phi=q$, we can easily deduce that
		\begin{equation*}
			\|\nabla^2\phi\|_{L^2}\leq C \|\mbox{div}(\nabla \phi)\|_{L^2}=C\|\Delta \phi\|_{L^2}=C\|q\|_{L^2},
		\end{equation*}
		\begin{equation*}
			\|\nabla^2\phi_t\|_{L^2}\leq C \|\mbox{div}(\nabla \phi_t)\|_{L^2}=C\|\Delta \phi_t\|_{L^2 }=C\|q_t\|_{L^2},
		\end{equation*}
		and
		\begin{equation*}
			\|\nabla^{2}\phi_{tt}\|_{L^2}\leq C\|\Delta\phi_{tt}\|_{L^2}=C\|q_{tt}\|_{L^2}.
		\end{equation*}
		Moreover, let $v=\nabla\phi$, then (\ref{Neumann}) becomes
		\begin{equation*}
			\left\{
			\begin{array}{lll}
				\Delta v=\nabla q  \quad \mbox{in}\quad \Omega\\[2mm]
				v\cdot n=0,\quad \mbox{curl} v\times n=0\quad \mbox{on}\quad \partial\Omega
			\end{array}
			\right.
		\end{equation*}
		Therefore, applying Lemma \ref{lem-elliptic-1} to $v=\nabla\phi$, it holds
		\begin{align*}
			\|\nabla^3\phi\|_{H^s}&=\|\nabla^2v\|_{H^s}\leq C \left( \|\Delta v\|_{H^s}+\|\nabla v\|_{L^2}\right)\nonumber\\[2mm]
			&=C\left(\|\nabla (\Delta\phi)\|_{H^s}+\|\nabla^2\phi\|_{L^2}\right)\nonumber\\[2mm]
			&\leq C\left(\|\nabla q\|_{H^s}+\|q\|_{L^2}\right),
		\end{align*}
		due to (\ref{phi-1}).
	\end{proof}

	\section{Steady state}
	In this section, we   construct the steady state $(\tilde{\rho},0,\tilde{\Phi})$ of \eqref{origin}. A steady state with $u\equiv0$ must satisfy the following equations:
	\begin{equation}\label{rho-Phi-2}
		\Delta \tilde{\Phi}-\tilde{\rho}+b(x)=0, \quad \nabla \tilde{\Phi}=\gamma \tilde{\rho}^{\gamma-2}\nabla \tilde{\rho} \quad \mbox{in}\quad \Omega
	\end{equation}
	with the boundary condition $\frac{\partial\tilde{\Phi}}{\partial n}=0$ on $\partial\Omega$, and
	\begin{equation}\label{boundary}
		\tilde{\rho}(x)\rightarrow c_*,\quad \tilde{\Phi}(x)\rightarrow 0 \quad \mbox{as}
		\quad  |x|\rightarrow +\infty,
	\end{equation}
	for some positive constant $c_*$.

	\begin{rmk}\label{rem-steady}
		We would like to emphasize that, given our consideration of the exterior domain $\Omega=\mathbb{R}^3 \setminus B_R$ with the Navier-slip boundary condition, controlling the boundary term poses significant challenges compared to the analysis in \cite{LMAT2010}. The authors of \cite{LMAT2010} examined domains where $x \in \mathbb{R}^N$ for $N=2,3$, which facilitated obtaining higher-order derivative estimates without the need to address boundary terms. In our proof, we frequently utilize the relationships (\ref{rho-Phi-2}) between $\tilde{ \rho}$ and $\tilde{\Phi}$ to derive estimates for $\| \nabla \tilde{ \rho} \|^2_{H^3}$.
		Furthermore, the results from \cite{GStrauss2005} are not applicable in this context, as their findings regarding the existence of a unique smooth steady-state solution pertain specifically to bounded domains in $\mathbb{R}^3$.
	\end{rmk}
	
	\begin{lem}\label{lem-rho}
Let $\Omega=\mathbb{R}^3\backslash B_{R}$, and
assume that $b(x)$ satisfies
\begin{align}\label{cond-b2}
0<c_{*}\leq b(x)\leq c_{*}+r^{-1}.
\end{align}
Then there exists a unique solution $(\tilde{\rho},\nabla\tilde{\Phi})$ of (\ref{rho-Phi-2}) and (\ref{boundary}) under the case $1<\gamma\leq2$. Moreover, $\tilde{\rho}$ has the positive upper and lower bounds $c_{*}\leq \tilde{\rho}(x)\leq c_{*}+r^{-1}$,
and  there exists a constant $C$ depending on $\|\nabla b\|_{H^{1}}$, such that
\begin{align}\label{estimate-rho}
\|\nabla\tilde{\rho}\|^2_{H^3}\leq C,
\end{align}
if $\nabla b\in H^{1}(\Omega)$.
\end{lem}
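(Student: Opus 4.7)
The relation $\nabla\tilde{\Phi}=\gamma\tilde{\rho}^{\gamma-2}\nabla\tilde{\rho}$ combined with the far-field data forces $\tilde{\Phi}=h(\tilde{\rho}):=\frac{\gamma}{\gamma-1}(\tilde{\rho}^{\gamma-1}-c_*^{\gamma-1})$, so setting $H=h^{-1}$ reduces \eqref{rho-Phi-2}-\eqref{boundary} to the single semilinear problem $\Delta\tilde{\Phi}=H(\tilde{\Phi})-b(x)$ in $\Omega$ with $\partial_n\tilde{\Phi}|_{\partial\Omega}=0$ and $\tilde{\Phi}\to 0$ at infinity, which I would attack by the sub-/super-solution method. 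I would take $\underline{\tilde{\Phi}}\equiv 0$ as sub-solution (using $b\geq c_*$) and $\overline{\tilde{\Phi}}(x)=h(c_*+|x|^{-1})$ as super-solution. The super-solution is where the choice $\Omega=\mathbb{R}^3\setminus B_R$ and the range $1<\gamma\leq 2$ are used decisively: $\Delta(1/|x|)=0$ on $\Omega$ and $h''(s)=\gamma(\gamma-2)s^{\gamma-3}\leq 0$ yield $\Delta\overline{\tilde{\Phi}}=h''(c_*+|x|^{-1})|\nabla(1/|x|)|^2\leq 0\leq(c_*+|x|^{-1})-b=H(\overline{\tilde{\Phi}})-b$, while on $\partial B_R$ the outward normal $n=-x/R$ gives $\partial_n\overline{\tilde{\Phi}}=h'(c_*+R^{-1})R^{-2}>0$. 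A monotone iteration on truncated annuli with $\overline{\tilde{\Phi}}$ as outer Dirichlet data, followed by elliptic compactness as the outer radius tends to infinity, produces a smooth $\tilde{\Phi}\in[0,h(c_*+|x|^{-1})]$, from which $\tilde{\rho}=H(\tilde{\Phi})$ inherits the claimed sandwich $c_*\leq\tilde{\rho}\leq c_*+|x|^{-1}$. Uniqueness then follows because the difference $w$ of two solutions satisfies $\Delta w=a(x)w$ with $a=\int_0^1 H'(\tilde{\Phi}_2+tw)\,dt\geq c>0$, Neumann boundary data and decay at infinity, so that testing by $w$ forces $w\equiv 0$.

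\textbf{Key energy identity.} For the estimates the pivotal idea is to multiply $\Delta\tilde{\Phi}=\tilde{\rho}-b$ by $\Delta\tilde{\Phi}$ itself rather than by $\tilde{\Phi}$. Two integrations by parts, the Neumann condition $\partial_n\tilde{\Phi}|_{\partial\Omega}=0$ killing the boundary contributions, together with the coupling, yield
\begin{equation*}
\|\Delta\tilde{\Phi}\|_{L^2}^2=-\int_\Omega\nabla\tilde{\Phi}\cdot\nabla\tilde{\rho}+\int_\Omega\nabla\tilde{\Phi}\cdot\nabla b=-\gamma\int_\Omega\tilde{\rho}^{\gamma-2}|\nabla\tilde{\rho}|^2+\int_\Omega\nabla\tilde{\Phi}\cdot\nabla b.
\end{equation*}
Since $\tilde{\rho}\in[c_*,c_*+R^{-1}]$, the factor $\tilde{\rho}^{\gamma-2}$ has positive upper and lower bounds and $|\nabla\tilde{\Phi}|\leq C|\nabla\tilde{\rho}|$, and Cauchy-Schwarz with Young's inequality absorbs the right-hand side to give
\begin{equation*}
\|\Delta\tilde{\Phi}\|_{L^2}^2+\|\nabla\tilde{\rho}\|_{L^2}^2\leq C\|\nabla b\|_{L^2}^2.
\end{equation*}
The virtue of this pairing is that the derivative lands on $b$, matching the hypothesis $\nabla b\in L^2$ exactly; it bypasses the naive pairing $\int\tilde{\Phi}(b-c_*)$, which diverges in general because both factors decay only like $|x|^{-1}$ at infinity and so fail to be in $L^2$. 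Then \eqref{phi-1} of Lemma \ref{lem-neu}, applied to $\tilde{\Phi}$ with source $\tilde{\rho}-b$, gives $\|\nabla^2\tilde{\Phi}\|_{L^2}\leq C\|\tilde{\rho}-b\|_{L^2}\leq C\|\nabla b\|_{L^2}$.

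\textbf{Bootstrap to $\nabla\tilde{\rho}\in H^3$.} With these base estimates, the refined elliptic estimate \eqref{phi-3} (used at $s=0$ and $s=1$) provides $\|\nabla^3\tilde{\Phi}\|_{L^2}$ and $\|\nabla^4\tilde{\Phi}\|_{L^2}$ from $\|\nabla^j(\tilde{\rho}-b)\|_{L^2}$ for $j\leq 2$. Differentiating the algebraic coupling $\nabla\tilde{\rho}=\gamma^{-1}\tilde{\rho}^{2-\gamma}\nabla\tilde{\Phi}$ expresses each $\nabla^k\tilde{\rho}$ as $\gamma^{-1}\tilde{\rho}^{2-\gamma}\nabla^k\tilde{\Phi}$ plus multilinear products of lower-order derivatives of $\tilde{\rho}$ and $\tilde{\Phi}$, which I would control by H\"older together with the Sobolev embeddings $H^1(\Omega)\hookrightarrow L^6$ and $H^2(\Omega)\hookrightarrow L^\infty$; the latter, applied to $\nabla\tilde{\Phi}$ once $\nabla^3\tilde{\Phi}\in L^2$ is in hand, delivers the key pointwise bound $\|\nabla\tilde{\rho}\|_{L^\infty}\leq C\|\nabla b\|_{L^2}$. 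The Neumann condition $\partial_n\tilde{\rho}|_{\partial\Omega}=0$, inherited from $\partial_n\tilde{\Phi}=0$ because $\tilde{\rho}^{\gamma-2}>0$, further allows Lemma \ref{lem-neu} to be applied to $\tilde{\rho}-c_*$ directly, and alternating the two directions closes the bootstrap at $\|\nabla\tilde{\rho}\|_{H^3}\leq C$. The main obstacle I anticipate is organizing this bootstrap: the chain-rule expansions intertwine several Sobolev norms of $\tilde{\rho}$ and $\tilde{\Phi}$ with mixed indices, so the order of the estimates must be chosen carefully to avoid circularity. The $s=1$ version of \eqref{phi-3}, which matches exactly the hypothesis $\nabla b\in H^1$, and the sharpness of $H^2\hookrightarrow L^\infty$ in three dimensions are precisely what make the argument go through.
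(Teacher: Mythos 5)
Your proposal follows essentially the same route as the paper: the same algebraic reduction $\tilde{\Phi}=\frac{\gamma}{\gamma-1}(\tilde{\rho}^{\gamma-1}-c_*^{\gamma-1})$ to a semilinear Neumann problem, the same sub-solution $0$ and super-solution $h(c_*+r^{-1})$ (whose verification uses exactly the harmonicity of $r^{-1}$ and the sign of $\gamma(\gamma-2)$ for $1<\gamma\leq 2$), the same key identity obtained by pairing $\Delta\tilde{\Phi}=\tilde{\rho}-b$ with $\Delta\tilde{\Phi}$ so that the derivative falls on $b$, and the same bootstrap via Lemma \ref{lem-neu} and the chain rule for $\nabla^k\tilde{\rho}$. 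The only additions beyond the paper's proof are your explicit uniqueness sketch and the monotone-iteration implementation of the sub/super-solution method, which the paper delegates to the cited reference; both are reasonable.
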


\begin{proof}
For the case of $\gamma>1$, in virtue of \eqref{rho-Phi-2}, we take
\begin{equation}\label{rho}
\tilde{\rho}(x)=\left(\frac{\gamma-1}{\gamma}\right)^{\frac{1}{\gamma-1}}(\tilde{\Phi}+c_1)^{\frac{1}{\gamma-1}}  
\end{equation}
where $c_1= \frac{\gamma}{\gamma-1}c_{*}^{\gamma-1}$,
then (\ref{rho-Phi-2})
can be easily transformed into a semi-linear elliptic PDE of $\tilde{\Phi}$,
\begin{equation}\label{sequation1}
\left\{
\begin{array}{lll}
\Delta \tilde{\Phi}-\left(\frac{\gamma-1}{\gamma}\right)^{\frac{1}{\gamma-1}}(\tilde{\Phi}+c_{1})^{\frac{1}{\gamma-1}}+b(x)=0\quad \mbox{in} \quad \Omega,\\[2mm]
\frac{\partial\tilde{\Phi}}{\partial n}=0 \quad \mbox{on}\quad \partial\Omega,\\[2mm]
\tilde{\Phi}(x) \rightarrow 0 \quad \mbox{as}\quad |x|\rightarrow \infty.
\end{array}
\right.
\end{equation}
It is easy to see that $\tilde{\Phi}_1(x)=0$ is a subsolution of problem (\ref{sequation1}) since $b(x)\geq c_{*}$. We claim that  $\tilde{\Phi}_2(x)= \frac{\gamma}{\gamma-1} \left(c_{*}+r^{-1}\right)^{\gamma-1} - c_1$ is a supersolution of problem (\ref{sequation1}) for the case $1<\gamma\leq 2$.
Indeed, a direct compute gives
\begin{equation*}
\frac{\partial\tilde{\Phi}_2}{\partial r}=-\gamma r^{-2}\left(c_*+r^{-1}\right)^{\gamma-2},
\end{equation*}
and
\begin{equation*}
\Delta \tilde{\Phi}_2=\gamma(\gamma-2)r^{-4}\left(c_{*}+r^{-1}\right)^{\gamma-3}.
\end{equation*}
Then
\begin{equation*}
\frac{\partial\tilde{\Phi}_2}{\partial n}\Big|_{\partial\Omega}=-\frac{\partial\tilde{\Phi}_2}{\partial r}\Big|_{r=R}>0,
\end{equation*}
and
\begin{align*}
\Delta \tilde{\Phi}_2-\left(\frac{\gamma-1}{\gamma}\right)^{\frac{1}{\gamma-1}}(\tilde{\Phi}_2+c_{1})^{\frac{1}{\gamma-1}}+b(x)
\leq
\gamma(\gamma-2)r^{-4}(c_{*}+r^{-1})^{\gamma-3}
\leq 0,
\end{align*}
thanks to $b(x)\leq c_{*}+r^{-1}$ and $1<\gamma\leq 2$.
According to the results in \cite{N1979}, there exists a solution $\tilde{\Phi}$ of (\ref{sequation1}) with $0\leq\tilde{\Phi}\leq (c_{*}+r^{-1})^{\gamma-1} \frac{\gamma}{\gamma-1} - c_1$.  Therefore, there exists a  solution  of $(\tilde{\rho},\nabla\tilde{\Phi})$ of (\ref{rho-Phi-2}), (\ref{boundary}),
with
\begin{equation*}
c_*\leq \tilde{\rho}\leq c_*+r^{-1},
\end{equation*}
due to (\ref{rho}).

For the case of $\gamma=1$, we take
\begin{equation*}
\tilde{\rho}(x)=c_*e^{\tilde{\Phi}(x)},
\end{equation*}
then   (\ref{rho-Phi-2}) becomes
\begin{equation}\label{sequation2}
\left\{
\begin{array}{lll}
\Delta \tilde{\Phi}-c_*e^{\tilde{\Phi}}+b(x)=0,\quad \mbox{in} \quad \Omega\\[2mm]
\frac{\partial\tilde{\Phi}}{\partial n}=0 \quad \mbox{on}\quad \partial\Omega\\[2mm]
\tilde{\Phi}(x) \rightarrow 0 \quad \mbox{as}\quad |x|\rightarrow \infty.
\end{array}
\right.
\end{equation}

Similar as above, we can easily check that $\tilde{\Phi}_3(x)=0$ is a subsolution  and $\tilde{\Phi}_4(x)=\ln (1+c_*^{-1}r^{-1})$
is a supersolution of (\ref{sequation2}) by our assumption that $0< c_{*} \leq b(x) \leq c_{*}+r^{-1}$. Therefore,
there exists a solution $\tilde{\Phi}$ of (\ref{sequation2}) with $0 \leq \tilde{\Phi} \leq \ln (1+c_{*}^{-1}r^{-1})$.
Then, the steady system  (\ref{rho-Phi-2}), (\ref{boundary}) exists a solution $(\tilde{\Phi},\tilde{\rho})$ with   $c_*\leq \tilde{ \rho}\leq c_*+r^{-1}$.

	Next, we shall prove some estimates of $\tilde{\rho}$, which will be frequently used in the following sections. Rewriting the steady problem as
		\begin{equation}\label{equ}
			\left\{
			\begin{array}{llll}
				\Delta \tilde{\Phi}=\tilde{\rho}-b(x),\quad \mbox{in} \quad \Omega\\[2mm]
				\frac{\partial\tilde{\Phi}}{\partial n}=0 \quad \mbox{on}\quad \partial\Omega\\[2mm]
				\tilde{\Phi}(x) \rightarrow 0 \quad \mbox{as}\quad |x|\rightarrow \infty.
			\end{array}
			\right.
		\end{equation}
		Multiplying  the equation (\ref{equ}) by $\Delta\tilde{\Phi}$, after integrating by parts with the homogeneous Neumann boundary condition $\frac{\pl \tilde{\Phi}}{\pl n}|_{\pl \Omega}=0$, it holds that
		\begin{equation*}
			\int_{\Omega}|\Delta \tilde{\Phi}|^2+\int_{\Omega}\nabla\left(\tilde{\rho}-b\right)\cdot\nabla\tilde{\Phi}=0.
		\end{equation*}
		In virtue of (\ref{rho-Phi-2}), we obtain
		\begin{equation}\label{ineq-1}
			\int_{\Omega}|\Delta \tilde{\Phi}|^2+\int_{\Omega}\gamma \widetilde{\rho}^{\gamma-2}\nabla\left(\tilde{\rho}-b\right)\cdot\nabla\tilde{\rho}=0.
		\end{equation}
		Using the boundedness of $\tilde{\rho}$ that we have already proved and the Cauchy's inequality, (\ref{ineq-1}) implies that
		\begin{equation}\label{ineq-2}
			\|\Delta \tilde{\Phi}\|_{L^2}^2+\|\nabla \left(\tilde{\rho}-b\right)\|_{L^2}^2\leq C\|\nabla b\|_{L^2}^2.
		\end{equation}
		Moreover, using (\ref{rho-Phi-2}) and the boundedness of $\tilde{ \rho}$ again, one has
		\begin{equation}\label{ineq-3}
			\|\nabla\tilde{\Phi}\|_{L^2}^2\leq C\|\nabla\tilde{\rho}\|^2_{L^2}\leq C\|\nabla b\|^2_{L^2},
		\end{equation}
		for the last inequality we have used $\|\nabla\tilde{\rho}\|^2_{L^2} \leq C (\|\nabla(\tilde{ \rho}-b)\|^2_{L^2} + \|\nabla b\|^2_{L^2})$ and (\ref{ineq-2}).
		
		Applying Lemma \ref{lem-neu} with $\phi=\tilde{\Phi}$, using (\ref{ineq-2}), we have
		\begin{equation}\label{ineq-4}
			\|\nabla^2\tilde{\Phi}\|^2_{L^2}\leq C \|\Delta\tilde{\Phi}\|^2_{L^2} \leq C\|\nabla b\|^2_{L^2},
		\end{equation}
		\begin{equation}\label{ineq-4-2}
			\|\nabla^3\tilde{\Phi}\|^2_{L^2}\leq C\|\Delta\tilde{\Phi}\|^2_{H^1}
			\leq C\left(\|\Delta\tilde{\Phi}\|^2_{L^2}+\|\nabla(\tilde{\rho}-b)\|^2_{L^2}\right)\leq C\|\nabla b\|^2_{L^2}.
		\end{equation}
		Furthermore, applying Sobolev inequality and  (\ref{ineq-3}) and (\ref{ineq-4-2}), one gets
		\begin{equation}\label{ineq-5}
			\|\nabla\tilde{\Phi}\|_{L^{\infty}}\leq \|\nabla^3\tilde{\Phi}\|^{\frac{3}{4}}_{L^2}\|\nabla\tilde{\Phi}\|^{\frac{1}{4}}_{L^2}\leq C\|\nabla b\|_{L^2}.
		\end{equation}

		A direct calculation gives
		\begin{equation*}
			 \partial_{ij}\tilde{\rho}=\tilde{\rho}\left(\partial_{ij}\tilde{\Phi}+\partial_{i}\tilde{\Phi}\partial_{j}\tilde{\Phi}\right), \quad \mbox{if}\quad \gamma=1,
		\end{equation*}
		and
		\begin{equation*}
			\partial_{ij}\tilde{\rho}=C_{1}(\gamma)\tilde{\rho}^{2-\gamma}\partial_{ij}\tilde{\Phi}
			+C_{2}(\gamma)\tilde{\rho}^{3-2\gamma}\partial_{i}\tilde{\Phi}\partial_{j}\tilde{\Phi}, \quad \mbox{if}\quad \gamma>1,
		\end{equation*}
		note here we use $C(\cdot)$ to represent the quantities that $C$ relies on.
		In virtue of the  positive upper and
		lower bounds of $\tilde{\rho}$, it holds that
		\begin{align}\label{ineq-6}
			\|\nabla^2\tilde{\rho}\|^2_{L^2}&\leq C\left(\|\nabla^2\tilde{\Phi}\|^2_{L^2}+\|\nabla\tilde{\Phi}\|^4_{L^4}\right)\nonumber\\
			&\leq C\left(\|\nabla^2\tilde{\Phi}\|^2_{L^2}+\|\nabla\tilde{\Phi}\|^2_{L^{\infty}}\|\nabla\tilde{\Phi}\|^2_{L^2}\right)\nonumber\\
			&\leq C\left(\|\nabla b\|^2_{L^2}+\|\nabla b\|^4_{L^2}\right),
		\end{align}
		where we used (\ref{ineq-3}), (\ref{ineq-4}) and (\ref{ineq-5}).
		
		Similarly, the third-order derivation of $\tilde{\rho}$ is
		\begin{equation*}
			 \partial_{ijk}\tilde{\rho}=\tilde{\rho}\left(\partial_{ijk}\tilde{\Phi}+\partial_{ik}\tilde{\Phi}\partial_{j}\tilde{\Phi}
			+\partial_{k}\tilde{\Phi}\partial_{jk}\tilde{\Phi}\right)+\tilde{\rho}\partial_{k}\tilde{\Phi}
			\left(\partial_{ij}\tilde{\Phi}+\partial_{i}\tilde{\Phi}\partial_{j}\tilde{\Phi}\right), \quad \mbox{if}\quad \gamma=1,
		\end{equation*}
		and
		\begin{equation*}
			\partial_{ijk}\tilde{\rho}=C_{3}(\gamma)\tilde{\rho}^{2-\gamma}\partial_{ijk}\tilde{\Phi}
			+C_{4}(\gamma)\tilde{\rho}^{3-2\gamma}\partial_{ik}\tilde{\Phi}\partial_{j}\tilde{\Phi}
			+C_{5}(\gamma)\tilde{\rho}^{1-\gamma}\partial_{ij}\tilde{\Phi}\partial_{k}\tilde{\Phi}
			 +C_{6}(\gamma)\tilde{\rho}^{2-2\gamma}\partial_{i}\tilde{\Phi}\partial_{j}\tilde{\Phi}\partial_{k}\tilde{\Phi}
			, \quad \mbox{if}\quad \gamma>1.
		\end{equation*}
		Therefore,
		\begin{align} \label{ineq-7}
			\|\nabla^3\tilde{\rho}\|^2_{L^2}
			&\leq C\left(\|\nabla^3\tilde{\Phi}\|^2_{L^2}+\|\nabla\tilde{\Phi}\|^2_{L^{\infty}}\|\nabla^2\tilde{\Phi}\|^2_{L^2}
			+\|\nabla^{2}\tilde{\Phi}\|_{L^2}^6\right)\nonumber\\[2mm]
			&\leq C\left(\|\nabla b\|^2_{L^2}+\|\nabla b\|^4_{L^2}+\|\nabla b\|^6_{L^2}\right),
		\end{align}
		following the same argument as (\ref{ineq-6}).
		
		To get higher order estimates, we first consider
		\begin{align}
			\|\nabla^4\tilde{\Phi}\|^2_{L^2}&\leq C\|\Delta\tilde{\Phi}\|^2_{H^2} \leq C\| \nabla^2(\tilde{ \rho}-b)\|^2_{L^2}\nonumber\\[2mm]
			&\leq C\left(\|\nabla^2 \tilde{ \rho} \|^2_{L^2}+\|\nabla^2b\|^2_{L^2}\right)\nonumber\\[2mm]
			&\leq C \left(\|\nabla b\|^4_{L^2}+\|\nabla b\|^2_{L^2}+ \|\nabla^2 b\|^2_{L^2}\right),\nonumber
		\end{align}
		and then
		\begin{align}
			\|\nabla^2\tilde{\Phi}\|^2_{L^{\infty}}\leq C(\|\nabla b\|_{L^2}, \|\nabla^2 b\|_{L^2}).\nonumber
		\end{align}

		By omitting $C(\gamma)$ and $\tilde{ \rho}$ for simplicity since they are bounded, we get the forth-order derivation of $\tilde{\rho}$
		\begin{align}
			\nabla^4\tilde{\rho}\sim \nabla^4\tilde{\Phi}+\nabla^3\tilde{\Phi}\nabla\tilde{\Phi}
			+(\nabla^2\tilde{\Phi})^2+\nabla^2\tilde{\Phi}(\nabla\tilde{\Phi})^2+(\nabla\tilde{\Phi})^4.
		\end{align}
		Therefore, we get
		\begin{align}
			&\|\nabla^4\tilde{\rho}\|^2_{L^2}\leq C\int_{\Omega}|\nabla^4\Phi|^2+\int_{\Omega}|\nabla^3\tilde{\Phi}|^2|\nabla\tilde{\Phi}|^2
			 +|\nabla^2\tilde{\Phi}|^4+|\nabla^2\tilde{\Phi}|^2|\nabla\tilde{\Phi}|^4+|\nabla\tilde{\Phi}|^8\nonumber\\[2mm]
			&\leq  C\left(\|\nabla^4\Phi\|^2_{L^2}+\|\nabla\Phi\|^2_{L^{\infty}}\|\nabla^3\Phi\|^2_{L^2}
			+\|\nabla^2\Phi\|^2_{L^{\infty}}\|\nabla^2\Phi\|^2_{L^2}
			+\|\nabla\Phi\|^4_{L^{\infty}}\|\nabla^2 \Phi\|_{L^2}^2
			+\|\nabla\Phi\|^2_{L^{\infty}}\|\nabla \Phi\|_{L^6}^6\right)\nonumber\\
			&\leq C\left(\|\nabla b\|_{L^2}, \|\nabla^2 b\|_{L^2}\right).\nonumber
		\end{align}
		This completes the proof.
	\end{proof}
	
	{\begin{rmk}For general case $\gamma>1$, assume  $b(x)$ satisfies
\begin{align}\label{cond-b2}
0<c_{*}\leq b(x)\leq  \left(\frac{\gamma-1}{\gamma}\right)^{\frac{1}{\gamma-1}} \left(c_0r^{-\varepsilon}+\frac{\gamma}{\gamma-1}c_{*}^{\gamma-1}\right)^{\frac{1}{\gamma-1}},
\end{align}
for some positive constant $c_0$ and $0<\varepsilon<1$.
We can also prove the existence of a steady state solution $(\tilde{\rho},\tilde{\Phi})$  by constructing the following supersolution $\tilde{\Phi}_2(x)=c_0r^{-\varepsilon}$.  Moreover, $\tilde{\rho}$ has the positive upper and lower bounds
\begin{equation*}
c_{*}\leq \tilde{\rho}(x)\leq \left(\frac{\gamma-1}{\gamma}\right)^{\frac{1}{\gamma-1}} \left(c_0r^{-\varepsilon}+\frac{\gamma}{\gamma-1}c_{*}^{\gamma-1}\right)^{\frac{1}{\gamma-1}}.
\end{equation*}
\end{rmk}
	\vspace{2mm}

	\section{Global existence of strong solutions}
	
	Let $q=\rho-\tilde{\rho}$, $\phi=\Phi-\tilde{\Phi}$. In order to reformulate the problem (\ref{origin})-(\ref{far}) properly, we introduce the enthalpy function
	\begin{equation*}
		h'(s)=\frac{p'(s)}{s}.
	\end{equation*}
	Then the initial boundary value problem (\ref{origin})-(\ref{boundary1}) can be reformulated into the perturbed form of
	\begin{equation}\label{equation-1}
		\left\{
		\begin{array}{lll}
			q_{t}+\mbox{div}(\tilde{\rho} u)=-\mbox{div}(q u)\equiv f^{0},\\[2mm]
			u_{t}+\nabla (h'(\tilde{\rho})q)-\tilde{\rho}^{-1}\left(\mu\Delta u+(\mu+\lambda)\nabla \mbox{div}u\right)-\nabla\phi=f,\\[2mm]
			\Delta\phi=q,\\[2mm]
			u\cdot n|_{\partial\Omega}=0,\quad \mbox{curl}u\times n|_{\partial\Omega}=(2S(n)-\alpha I)u,\\[2mm]
			\frac{\partial\phi}{\partial n}|_{\partial\Omega}=0,\\[2mm]
			u(0,\cdot)=u_0,\quad q(0,\cdot)=q_0\equiv \rho_0-\tilde{\rho},
		\end{array}
		\right.
	\end{equation}
	where the nonlinear term on $\eqref{equation-1}_2$ is described as
	\begin{equation}\label{eq-f}
		f=-u\cdot \nabla u + \gamma \nabla (q+\tilde{ \rho}) (\tilde{ \rho}^{\gamma-2} + (q+\tilde{ \rho})^{\gamma-2}) + \gamma (\gamma-2) \nabla \tilde{ \rho} \tilde{ \rho}^{\gamma-3} q +\left(\frac{1}{q+\tilde{\rho}}-\frac{1}{\tilde{\rho}}\right)\left(\mu\Delta u+(\mu+\lambda)\nabla \mbox{div}u\right).
	\end{equation}
	
	Theorem \ref{thm1} will be proved in this section. The local-in-time well-posedness of problem  \eqref{equation-1} can be proved by using the
	linearization and iteration technique, following the arguments in \cite{Ma1984}; therefore, to prove Theorem \ref{thm1}, it suffices to prove the following a priori estimates.
	For clarity, we introduce  quantities $\mathcal{E}(t)$ and $\mathcal{D}(t)$ as follows:
	\begin{equation}\label{E3}
		\mathcal{E}(t)=\|u(\cdot, t)\|_{H^3}+\|q(\cdot, t)\|_{H^2}+\|(q_t,u_t)(\cdot, t)\|_{H^1}+\|\nabla\phi(\cdot,t)\|_{L^2}+\|\nabla\phi_t(\cdot,t)\|_{L^2},
	\end{equation}
	and
	\begin{equation}\label{D3}
		\begin{split}
			\mathcal{D}(t)=\|\nabla u(\cdot, t)\|_{H^2}+\|\nabla u_{t}(\cdot, t)\|_{H^1}+\| q(\cdot, t)\|_{H^2}+\| q_t(\cdot, t)\|_{H^1}+\| q_{tt}(\cdot, t)\|_{L^2}.
		\end{split}
	\end{equation}

	\begin{prop}\label{prop} ({\it {a priori estimates}})
		Suppose that for some $T>0$, $(q,u,\phi)$ is a solution to the initial boundary value problem (\ref{equation-1}) in the time interval $t\in[0,T]$. Then there exist  positive constants $\delta$, $c$ and  $C$  which are independent of $t$,  such that if
		\begin{equation}\label{small}
			\sup_{0\leq t\leq T}\mathcal{E}(t)\leq \delta,
		\end{equation}
		then there holds, for any $t\in[0,T]$,
		\begin{equation*}
			\mathcal{E}^2(t)+c\int_{0}^{t}\mathcal{D}^2(s)ds\leq C\mathcal{E}^2(0).
		\end{equation*}
	\end{prop}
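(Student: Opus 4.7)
The plan is to close a standard multi-step energy estimate adapted to the exterior-domain Navier--slip setting, using the refined elliptic bounds (Lemmas \ref{lem-elliptic-1}, \ref{lem-neu}) to compensate for the loss of the usual $\|v\|_{L^2}$ handle. First I would derive the basic $L^2$ energy identity: multiply $\eqref{equation-1}_2$ by $\tilde{\rho}u$ and $\eqref{equation-1}_1$ by $h'(\tilde{\rho})q/\tilde{\rho}$, then integrate by parts. The Poisson coupling is handled by rewriting $-\int\nabla\phi\cdot\tilde{\rho}u=\int\phi\,\mathrm{div}(\tilde{\rho}u)=-\int\phi q_t+\int\phi f^0$ and using $\Delta\phi=q$ with $\partial_n\phi|_{\partial\Omega}=0$ to produce $\frac{1}{2}\frac{d}{dt}\|\nabla\phi\|^2$. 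The Navier-slip boundary generates $\int_{\partial\Omega}(2S(n)-\alpha I)u\cdot u$, which is controlled using Proposition \ref{prop-trace} for the contribution of $\alpha$ (the remaining piece is handled as in Remark \ref{rem1}). The viscous part yields $\mu\|\mathrm{curl}\,u\|_{L^2}^2+(2\mu+\lambda)\|\mathrm{div}\,u\|_{L^2}^2$; combining with Proposition \ref{prop-imp} produces the crucial dissipation $\|\nabla u\|_{L^2}^2$ without any $\|u\|_{L^2}$ term on the right.

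Next I would obtain the dissipation on the density itself, which is the signature mechanism in NSP: test $\eqref{equation-1}_2$ against $\nabla(h'(\tilde{\rho})q)$. The term $-\int\nabla(h'(\tilde{\rho})q)\cdot\nabla\phi$ becomes $\int h'(\tilde{\rho})q\cdot q + \mathrm{l.o.t.}$ after integration by parts (again using $\partial_n\phi=0$), delivering $\|q\|_{L^2}^2$ to the dissipation. The cross term $\int\nabla(h'q)\cdot u_t$ is absorbed via the continuity equation to become $-\frac{d}{dt}\int\nabla(h'q)\cdot u$ modulo controllable remainders. Then I would differentiate the system once (resp.\ twice) in $t$ and repeat both types of estimates to control $(q_t,u_t)$ and $q_{tt}$ in $L^2$, with the Poisson estimates $\|\nabla^2\phi_t\|\le C\|q_t\|$ and $\|\nabla^2\phi_{tt}\|\le C\|q_{tt}\|$ from Lemma \ref{lem-neu} furnishing the needed control of $\nabla\phi_t$ in $\mathcal{E}$ and closing the $L^2_t$ integration of $\phi_{tt}$.

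Spatial regularity is then recovered from the equations themselves rather than by spatial differentiation. Solving $\eqref{equation-1}_2$ for $-\mu\Delta u-(\mu+\lambda)\nabla\mathrm{div}\,u$ and inserting into the Lamé estimate (Lemma \ref{lem-elliptic-1}) gives
\begin{equation*}
\|\nabla^2 u\|_{L^2}\le C\bigl(\|u_t\|_{L^2}+\|\nabla q\|_{L^2}+\|\nabla\phi\|_{L^2}+\|\nabla u\|_{L^2}+\text{nonlinear}\bigr),
\end{equation*}
and similarly $\|\nabla^3 u\|_{L^2}$ is controlled by $\|\nabla u_t\|_{L^2}$, $\|\nabla^2 q\|_{L^2}$ plus lower order terms. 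The density regularity $\|\nabla q\|_{H^1}$ comes from reading $\eqref{equation-1}_2$ as $\nabla(h'(\tilde{\rho})q)=-u_t+\tilde{\rho}^{-1}(\mu\Delta u+(\mu+\lambda)\nabla\mathrm{div}\,u)+\nabla\phi+f$, and using Lemma \ref{lem-neu} to upgrade $\nabla\phi$ estimates by $\|\nabla^3\phi\|_{H^s}\le C(\|\nabla q\|_{H^s}+\|q\|_{L^2})$. The estimates on $\tilde{\rho}$ from Lemma \ref{lem-rho} ensure that the $\tilde{\rho}$-dependent coefficients behave uniformly and produce only terms absorbable by the dissipation.

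The main obstacle, in my view, is the careful treatment of the Navier-slip boundary integrals that appear in the higher-order and time-differentiated energy estimates: each integration by parts against the viscous term produces $\int_{\partial\Omega}(2S(n)-\alpha I)u\cdot(\cdot)$, and for $\nabla u_t$ or $\nabla^2 u$ these require Proposition \ref{lem-boundary-key} together with the trace/scaling bound in Proposition \ref{prop-trace} in order to avoid picking up an uncontrollable $\|u\|_{L^2}$ on the exterior domain. Once all these boundary terms are absorbed by a small $\varepsilon\|\nabla^{\cdot}u\|^2$ (using smallness $\mathcal{E}\le\delta$ and Sobolev embeddings Lemma \ref{sobolev inequ} for the nonlinear terms $f^0,f$), summing the chain of inequalities with suitable positive weights yields $\frac{d}{dt}\mathcal{E}^2(t)+c\mathcal{D}^2(t)\le 0$ modulo terms of order $\delta\mathcal{D}^2$, and Gronwall-style integration closes the bound $\mathcal{E}^2(t)+c\int_0^t\mathcal{D}^2\le C\mathcal{E}^2(0)$.
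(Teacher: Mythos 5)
Your low-order plan matches the paper's (basic $L^2$ identity with Proposition \ref{prop-imp} and Proposition \ref{prop-trace}, density dissipation from testing against $\nabla(h'(\tilde{\rho})q)$ and using $\Delta\phi=q$, one time-differentiation for $(u_t,q_t,\nabla\phi_t)$). The genuine gap is in how you propose to put the \emph{higher-order} terms into the dissipation. You state that spatial regularity is ``recovered from the equations themselves rather than by spatial differentiation,'' i.e.\ via the Lam\'{e} estimate $\|\nabla^2 u\|_{L^2}\le C(\|u_t\|_{L^2}+\|\nabla q\|_{L^2}+\|\nabla\phi\|_{L^2}+\dots)$ and its analogues. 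That works for the pointwise-in-time bound on $\mathcal{E}(t)$ (where $\|u_t\|_{H^1}$ and $\|\nabla\phi\|_{L^2}$ are part of $\mathcal{E}$), but it cannot produce the time-integrated bound $\int_0^t\|\nabla u\|_{H^2}^2+\|\nabla u_t\|_{H^1}^2+\|q\|_{H^2}^2\,ds\le C\mathcal{E}^2(0)$: the right-hand side of the elliptic estimate contains $\|u_t\|_{L^2}$ and $\|\nabla\phi\|_{L^2}$, and neither $\int_0^t\|u_t\|_{L^2}^2$ nor $\int_0^t\|\nabla\phi\|_{L^2}^2$ is controlled here --- there is no Poincar\'{e} inequality on the exterior domain, no damping term in the momentum equation, and the paper explicitly notes that $\|\nabla\phi\|_{L^2_t(L^2)}$ cannot be obtained directly. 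Attempting to bound $\|u_t\|_{L^2}$ back through the momentum equation is circular and reintroduces $\|\nabla\phi\|_{L^2}$.

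What the paper actually does for the higher orders is derive dedicated energy identities for the spatially differentiated quantities $\nabla\mathrm{div}\,u$, $\mathrm{curl}^2u$, $\nabla\mathrm{div}\,u_t$, $\mathrm{curl}^2u_t$, $\nabla^2\mathrm{div}\,u$, $\mathrm{curl}^3u$ and $\nabla^2q$ (its Lemmas \ref{lem-div}--\ref{lem-2q}), and then reassembles full gradients with the div--curl machinery of Lemmas \ref{lem-div-curl} and \ref{lem-ell}. The point of these identities is structural: the dangerous terms appear only in forms that integrate away, e.g.\ $-\int\tilde{\rho}\,u_t\cdot\nabla\mathrm{div}\,u$ becomes a time derivative of $\int\tilde{\rho}|\mathrm{div}\,u|^2$, and $\int\nabla\phi\cdot\nabla\mathrm{div}\,u$ becomes $-\int q\,\mathrm{div}\,u$ after one integration by parts using $\Delta\phi=q$, so that $\|u_t\|_{L^2}$ and $\|\nabla\phi\|_{L^2}$ never enter the dissipation budget undifferentiated. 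Your proposal omits this entire layer, and also the second time-differentiation you mention is both unavailable (no $u_{tt}$ control in this functional frame) and unnecessary, since $\|q_{tt}\|_{L^2}$ is read off directly from the once-differentiated continuity equation. Without the spatially differentiated energy identities the chain of inequalities does not close to $\frac{d}{dt}\mathcal{E}^2+c\mathcal{D}^2\le C\delta\mathcal{D}^2$ with the full $\mathcal{D}$ of \eqref{D3}.
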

	
	In the following, we will derive the a priori estimates for the solutions to (\ref{equation-1}) under the assumption  \eqref{small}
	for sufficiently small $\delta>0$.

	We want to remark that according to the definition of $h'$, it is easy to see that
	\begin{align*}
		h'(\tilde{\rho})=\gamma\tilde{\rho}^{\gamma-2},
	\end{align*}
	thus by the boundedness of $\tilde{ \rho}$ and $\|\nabla \tilde{\rho}\|_{H^3}$, there exist some positive constants $C_1, C_2, C$ such that
	\begin{align}\label{hb}
		0<C_1<h'(\tilde{\rho})<C_2, \quad |h''(\tilde{\rho})|\leq C, \quad \mbox{and} \quad  |h'''(\tilde{\rho})|\leq C.
	\end{align}
	These estimates will be used frequently in the following proofs.

	\subsection{Estimates for lower-order derivatives}
	
	We first derive the zero-order energy estimates for the solution itself.
	\begin{lem}\label{lem-basic}
		Suppose that the conditions in Proposition \ref{prop} hold, then for the solutions $(q,u,\nabla\phi)$ of (\ref{equation-1}) in any exterior domains $\Omega=\mathbb{R}^3\setminus B_{R}$, it holds
		\begin{equation}\label{result-1}
			\frac{1}{2}\frac{d}{dt}\int_{\Omega}\left(\tilde{\rho}|u|^2+h'(\tilde{\rho})|q|^2+|\nabla \phi|^2\right)+c\|\nabla u\|_{L^2}^2 \leq C\delta
			\mathcal{D}^2(t),
		\end{equation}
		provided $\alpha>0$, where $c$ is a constant only depends on $\mu$ and $\lambda$ and constant $C>0$ independent of $t$.
		For general case $\alpha$, then the energy estimate (\ref{result-1}) is also valid for exterior domains $\Omega=\mathbb{R}^3\setminus B_{R}$ with small $R$.
	\end{lem}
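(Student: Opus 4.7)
The plan is to run the standard energy-multiplier argument adapted to the Navier-slip geometry and the Poisson coupling. First I would take the $L^2$ inner product of $\eqref{equation-1}_1$ with $h'(\tilde\rho)q$ and of $\eqref{equation-1}_2$ with $\tilde\rho u$, then integrate over $\Omega$. Since $\tilde\rho$ is time-independent, the time derivatives produce $\tfrac{1}{2}\frac{d}{dt}\int h'(\tilde\rho)|q|^2$ and $\tfrac{1}{2}\frac{d}{dt}\int\tilde\rho|u|^2$. Using $u\cdot n=0$ on $\partial\Omega$, one integration by parts makes the two pressure-work cross terms $\int\tilde\rho u\cdot\nabla(h'(\tilde\rho)q)$ and $\int h'(\tilde\rho)q\,\mbox{div}(\tilde\rho u)$ cancel with no boundary contribution, which is exactly the symmetrization that motivates the weights $\tilde\rho$ and $h'(\tilde\rho)$.

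To produce the $\tfrac{1}{2}\frac{d}{dt}\int|\nabla\phi|^2$ piece I would process the electric force $-\int\tilde\rho u\cdot\nabla\phi$ by integration by parts (again using $u\cdot n=0$) and substitution of $\mbox{div}(\tilde\rho u)=-q_t+f^0$ from the continuity equation, obtaining $-\int\phi q_t+\int\phi f^0$. The Poisson identity $q_t=\Delta\phi_t$ together with $\nabla\phi_t\cdot n|_{\partial\Omega}=0$ then gives $-\int\phi q_t=\int\nabla\phi\cdot\nabla\phi_t=\tfrac{1}{2}\frac{d}{dt}\int|\nabla\phi|^2$, which is the potential-energy term in \eqref{result-1}.

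For the viscous dissipation, I would use the identity $-\mu\Delta u-(\mu+\lambda)\nabla\mbox{div}\,u=\mu\,\mbox{curl}^2u-(2\mu+\lambda)\nabla\mbox{div}\,u$ and integrate by parts. The Navier-slip condition $\mbox{curl}\,u\times n=(2S(n)-\alpha I)u$ turns the curl boundary contribution into $-\mu\int_{\partial\Omega}(2S(n)-\alpha I)u\cdot u$. By Remark \ref{rem1} the eigenvalues of $S(n)$ are $0,-1/R,-1/R$, so $-2S(n)$ is positive semi-definite, and for $\alpha\ge 0$ both boundary contributions have the favorable sign and may be discarded. Since $\Omega=\mathbb R^3\setminus B_R$ has vanishing first Betti number and $u\cdot n=0$, Proposition \ref{prop-imp} then upgrades $\mu\|\mbox{curl}\,u\|_{L^2}^2+(2\mu+\lambda)\|\mbox{div}\,u\|_{L^2}^2$ to the claimed $c\|\nabla u\|_{L^2}^2$. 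For the general, possibly negative, $\alpha$, the trace estimate $|u|_{L^2(\partial\Omega)}^2\le C^{*}R\|\nabla u\|_{L^2}^2$ from Proposition \ref{prop-trace} allows us to absorb $\mu\alpha\int_{\partial\Omega}|u|^2$ into the interior dissipation once $R$ is chosen small enough.

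Finally, the nonlinear residues $\int\tilde\rho u\cdot f+\int h'(\tilde\rho)qf^0-\int\phi f^0$ must be bounded by $C\delta\,\mathcal D^2(t)$. The first two are cubic in $(q,u,\nabla q,\nabla u)$ thanks to the explicit form of $f$ and $f^0=-\mbox{div}(qu)$; H\"older together with the Sobolev embeddings of Lemma \ref{sobolev inequ} and the uniform bounds on $\tilde\rho$ and $h'(\tilde\rho)$ from Lemma \ref{lem-rho} extract one $\delta$-factor via $\mathcal E(t)\le\delta$. The delicate piece is $\int\phi f^0$, since no direct $L^2$-bound on $\phi$ itself is available; I would rewrite $-\int\phi\,\mbox{div}(qu)=\int qu\cdot\nabla\phi$ and estimate by $\|q\|_{L^3}\|u\|_{L^6}\|\nabla\phi\|_{L^2}$. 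This Poisson coupling, together with the sign management of the Navier-slip boundary integrals, is the main obstacle of the whole argument.
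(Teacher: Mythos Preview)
Your proposal is correct and follows essentially the same route as the paper: the same multipliers $h'(\tilde\rho)q$ and $\tilde\rho u$, the same cancellation of the pressure cross terms, the same curl-div rewriting of the Lam\'e operator with the Navier-slip boundary producing $-\mu\int_{\partial\Omega}(2S(n)-\alpha I)u\cdot u$, the same use of Proposition~\ref{prop-imp} to pass to $\|\nabla u\|_{L^2}^2$, and Proposition~\ref{prop-trace} for the possibly negative $\alpha$. The only cosmetic difference is in the H\"older splitting of the cubic Poisson remainder $\int qu\cdot\nabla\phi$: the paper uses $\|u\|_{L^3}\|q\|_{L^2}\|\nabla\phi\|_{L^6}$ together with the elliptic bound $\|\nabla\phi\|_{L^6}\le C\|\nabla^2\phi\|_{L^2}\le C\|q\|_{L^2}$ from Lemma~\ref{lem-neu}, whereas you use $\|q\|_{L^3}\|u\|_{L^6}\|\nabla\phi\|_{L^2}$ and absorb $\|\nabla\phi\|_{L^2}\le\delta$ directly from $\mathcal E(t)$; both yield $C\delta\,\mathcal D^2(t)$.
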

	\begin{proof}
		Rewriting the second equation of (\ref{equation-1}) by using the identity $\Delta u=-\mbox{curl}^2u+\nabla\mbox{div}u$, we have
		\begin{equation}\label{equation-u}
			u_{t}+\nabla (h'(\tilde{\rho})q)+\mu\tilde{\rho}^{-1}\mbox{curl}^2 u-(2\mu+\lambda)\tilde{\rho}^{-1}\nabla \mbox{div}u-\nabla\phi=f.
		\end{equation}
		Then computing the following integral
		\begin{equation*}
			\int_{\Omega}\eqref{equation-1}_1h'(\tilde{\rho})q+\eqref{equation-u}\cdot\tilde{\rho} u,
		\end{equation*}
		it holds
		\begin{align}\label{step1}
			&\int_{\Omega}\left(\tilde{\rho} u_t\cdot u+h'(\tilde{\rho})q_tq\right)
			+\int_{\Omega}\left(\mbox{div}(\tilde{\rho} u)h'(\tilde{\rho})q+\tilde{\rho} u\cdot\nabla(h'(\tilde{\rho})q)\right)\nonumber\\[2mm]
			&+\mu\int_{\Omega} \mbox{curl}^2u\cdot u-(2\mu+\lambda)\int_{\Omega} \nabla\mbox{div}u\cdot u
			-\int_{\Omega}\tilde{ \rho} u\cdot\nabla\phi
			=\int_{\Omega}h'(\tilde{\rho})f^0q+\tilde{\rho} f\cdot u.
		\end{align}

		Integrating by parts with the boundary condition $(u\cdot n)|_{\pl \Omega}=0$, one can easily check
		\begin{align*}
			\int_{\Omega}\left(\mbox{div}(\tilde{\rho} u)h'(\tilde{\rho})q+\tilde{\rho} u\cdot\nabla(h'(\tilde{\rho})q)\right)=0,
		\end{align*}
		and
		\begin{align*}
			-\int_{\Omega} \nabla\mbox{div}u\cdot u=\int_{\Omega} |\mbox{div}u|^2.
		\end{align*}

		For the term involving $\mbox{curl}u$ on the left-hand side of (\ref{step1}), we do integration by parts to get
		\begin{align}
			\int_{\Omega}\mbox{curl}^2u \cdot u&= \int_{\Omega}|\mbox{curl}u|^2-\int_{\partial\Omega}(\mbox{curl} u\times n)\cdot u\nonumber\\[2mm]
			&=\|\mbox{curl}u\|^2_{L^2}-2\int_{\partial\Omega}S(n)u\cdot u+\int_{\partial\Omega} \alpha^{+}|u|^2-\int_{\partial\Omega}\alpha^{-} |u|^2,
		\end{align}
		due to the boundary condition $\mbox{curl} u\times n=(2S(n)-\alpha I)u$, where  $\alpha^{+}=\sup\left\{\alpha,0\right\}$ and $\alpha^{-}=\sup\{-\alpha,0\}$.
		Noting that $-S(n)$ is positive semi-definite in our setting $\Omega\equiv\mathbb{R}^3\setminus B_R$  (see Remark \ref{rem1}), then the above boundary integrals $-\int_{\partial\Omega}S(n)u\cdot u+\int_{\partial\Omega} \alpha^{+}|u|^2$ is good terms in the energy estimate since they are nonnegative terms on the left-hand side. Then we shall use the trace theorem to estimate the boundary integral $\int_{\partial\Omega} \alpha^{-} |u|^2$
		\begin{align*}
			\int_{\partial\Omega} \alpha^{-} |u|^2\leq |\alpha^{-}|_{L^{\infty}}|u|^2_{L^{2}(\partial\Omega)}\leq C^{*}|\alpha^{-}|_{L^{\infty}} R \|\nabla u\|^2_{L^2},
		\end{align*}
		where we used Proposition \ref{prop-trace}.

		To estimate the last term on the left-hand side of (\ref{step1}),  integrating by parts with the boundary conditions $(u\cdot n)|_{\partial\Omega}=0$, $(\nabla\phi\cdot n)|_{\partial\Omega}=0$, and using $q_t + \mbox{div}(\tilde{ \rho} u)=-\mbox{div}(qu)$ and the Poisson equation $\Delta \phi =q$ in (\ref{equation-1}), we obtain
		\begin{align*}
			-\int_{\Omega}\tilde{\rho}\nabla\phi\cdot u&=\int_{\Omega}\phi\mbox{div}(\tilde{\rho}u)
			=-\int_{\Omega}\phi q_{t}-\int_{\Omega}\phi\mbox{div}(qu)\nonumber\\[2mm]
			&=-\int_{\Omega}\phi \Delta\phi_{t}+\int_{\Omega}qu\cdot\nabla\phi\nonumber\\[2mm]
			&=\frac{1}{2}\frac{d}{dt}\int_{\Omega}|\nabla\phi|^2+\int_{\Omega}qu\cdot\nabla\phi\nonumber\\[2mm]
			&\geq \frac{1}{2}\frac{d}{dt}\int_{\Omega}|\nabla\phi|^2-\|u\|_{L^3}\|q\|_{L^2}\|\nabla\phi\|_{L^6}\nonumber\\[2mm]
			&\geq \frac{1}{2}\frac{d}{dt}\int_{\Omega}|\nabla\phi|^2-C\delta\|q\|^2_{L^2},
		\end{align*}
		due to Lemma \ref{sobolev inequ} and (\ref{phi-1}) such that we know $\|\nabla \phi\|_{L^6} \leq C \|\nabla^2 \phi \|_{L^2} \leq \|q\|_{L^2}$.
		
		Finally, we estimate the nonlinear terms on the right-hand side of (\ref{step1}),
		\begin{align*}
			\int_{\Omega}h'(\tilde{\rho}) f^{0}q&=\int_{\Omega}h'(\tilde{\rho})q\mbox{div}(qu)\leq C\left(\|q\|_{L^3}\|q\|_{L^6}\|\mbox{div}u\|_{L^2}+\|q\|_{L^3}\|\nabla q\|_{L^2}\|u\|_{L^6}\right)\nonumber\\[2mm]
			&\leq C\delta\left(\|\nabla q\|^2_{L^2}+\|\nabla u\|^2_{L^2}\right),
		\end{align*}
		and
		\begin{align*}
			\int_{\Omega}\tilde{\rho} f\cdot u
			&\leq C\left(\|u\|_{L^3}\|\nabla u\|_{L^2}\|u\|_{L^6}+
			\|q\|_{L^3}\|\nabla q\|_{L^2}\|u\|_{L^6}+\|q\|_{L^3}\|\nabla^2 u\|_{L^2}\|u\|_{L^6}\right)\nonumber\\[2mm]
			&\leq C\delta(|u|_{L^3},|q|_{L^3})\left(\|\nabla q\|^2_{L^2}+\|\nabla u\|^2_{L^2}+\|\nabla^2 u\|^2_{L^2}\right).
		\end{align*}
		Putting the above estimates into (\ref{step1}), using Lemma \ref{prop-imp}, we conclude that
		\begin{align*}
			&\frac{1}{2}\frac{d}{dt}\int_{\Omega}\left(\tilde{\rho}|u|^2+h'(\tilde{\rho})|q|^2+|\nabla \phi|^2\right)dx+c\|\nabla u\|^2_{L^2}
			-\int_{\partial\Omega}S(n)u\cdot u+\int_{\partial\Omega} \alpha^{+} |u|^2\nonumber\\[2mm]
			&\leq C^{*}|\alpha^{-}|_{L^{\infty}} R \|\nabla u\|^2_{L^2}+
			C\delta \left( \|q\|_{H^1}^2+\|\nabla u\|_{H^1}^2\right),
		\end{align*}
		which yields the basic energy estimate (\ref{result-1}) by choosing $R$ small enough to satisfy $C^{*}|\alpha^{-}|_{L^{\infty}} R < c$ thus the corresponding term on the right-hand side can be moved to the left-hand side.
	\end{proof}
	
	Next, we prove estimates of the first-order derivatives $\nabla q$  and  $\mbox{div}u$.
	\begin{lem}\label{lem-div}
		Suppose that the conditions in Proposition \ref{prop} hold, the for any $\varepsilon>0$, we have that
		\begin{align}\label{result-2}
			&\frac{1}{2}\frac{d}{dt}\int_{\Omega}\left(\tilde{\rho}|\mbox{div}u|^2+h'(\tilde{\rho})|\nabla q|^2\right)+c\|\nabla \mbox{div}u\|_{L^2}^2\nonumber\\[2mm]
			&\leq
			\varepsilon\left(\|\nabla u_t\|^2_{L^2}+\|q\|_{H^1}^2\right) +C\|\nabla u\|_{L^2}^2+C\delta\mathcal{D}^2(t),
		\end{align}
		where $c$ and $C$ are positive constants with $c$ only depends on $\mu$ and $\lambda$, and constant $C>0$ independent of $t$.
	\end{lem}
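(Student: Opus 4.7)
My goal is to produce the combined time derivative $\frac{1}{2}\frac{d}{dt}\int(\tilde{\rho}|\mbox{div}u|^2+h'(\tilde{\rho})|\nabla q|^2)$ together with the dissipation $(2\mu+\lambda)\|\nabla\mbox{div}u\|_{L^2}^2$ by pairing two carefully chosen multipliers so that the unavoidable cross term $\int h'(\tilde{\rho})\tilde{\rho}\nabla q\cdot\nabla\mbox{div}u$ cancels. Concretely, I will (i) apply $\nabla$ to the perturbed continuity equation $\eqref{equation-1}_1$ and test against $h'(\tilde{\rho})\nabla q$, which produces $\frac{1}{2}\frac{d}{dt}\int h'(\tilde{\rho})|\nabla q|^2+\int h'(\tilde{\rho})\tilde{\rho}\nabla q\cdot\nabla\mbox{div}u$ together with lower-order pieces involving $\nabla\tilde{\rho}$ and $\nabla^2\tilde{\rho}$, and (ii) test the Lam\'e-form momentum equation \eqref{equation-u} against $\tilde{\rho}\nabla\mbox{div}u$. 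For (ii), the key identity is the integration by parts
\begin{equation*}
\int_{\Omega}\tilde{\rho} u_t\cdot\nabla\mbox{div}u=-\tfrac{1}{2}\tfrac{d}{dt}\int_{\Omega}\tilde{\rho}|\mbox{div}u|^2-\int_{\Omega}(u_t\cdot\nabla\tilde{\rho})\mbox{div}u,
\end{equation*}
which uses the boundary condition $u_t\cdot n|_{\partial\Omega}=0$. Expanding $\nabla(h'(\tilde{\rho})q)=h'(\tilde{\rho})\nabla q+qh''(\tilde{\rho})\nabla\tilde{\rho}$ produces the second occurrence of the cross term with opposite sign after adding (i) and (ii). The sum then reads
\begin{equation*}
\tfrac{1}{2}\tfrac{d}{dt}\!\!\int_{\Omega}\!\!(\tilde{\rho}|\mbox{div}u|^2+h'(\tilde{\rho})|\nabla q|^2)+(2\mu+\lambda)\|\nabla\mbox{div}u\|_{L^2}^2=\text{RHS},
\end{equation*}
with RHS a sum of nine remainder integrals.

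\textbf{Estimating the remainders.} The $u_t$-coupling is controlled by Sobolev embedding: $|\int(u_t\cdot\nabla\tilde{\rho})\mbox{div}u|\leq\|u_t\|_{L^6}\|\nabla\tilde{\rho}\|_{L^3}\|\mbox{div}u\|_{L^2}\leq\varepsilon\|\nabla u_t\|_{L^2}^2+C\|\nabla u\|_{L^2}^2$, which is exactly where the two weak norms $\varepsilon\|\nabla u_t\|_{L^2}^2$ and $C\|\nabla u\|_{L^2}^2$ on the right-hand side originate. The electric term $-\int\tilde{\rho}\nabla\phi\cdot\nabla\mbox{div}u$ is handled by integration by parts using $\nabla\phi\cdot n|_{\partial\Omega}=0$ and $\Delta\phi=q$, turning it into $-\int\tilde{\rho} q\mbox{div}u-\int(\nabla\tilde{\rho}\cdot\nabla\phi)\mbox{div}u$; both terms are bounded by $\varepsilon\|q\|_{H^1}^2+C\|\nabla u\|_{L^2}^2$ using Lemma~\ref{lem-neu} (via $\|\nabla\phi\|_{L^6}\leq C\|q\|_{L^2}$) and Young's inequality. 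The $qh''(\tilde{\rho})\nabla\tilde{\rho}\cdot\nabla\mbox{div}u$ term and the lower-order pieces involving $\nabla\tilde{\rho},\nabla^2\tilde{\rho}$ are bounded by $\varepsilon\|\nabla\mbox{div}u\|_{L^2}^2+\varepsilon\|q\|_{H^1}^2+C\|\nabla u\|_{L^2}^2$ after invoking the steady-state estimate $\|\nabla\tilde{\rho}\|_{H^3}\leq C$ from Lemma~\ref{lem-rho}. The genuinely nonlinear contributions $\int\tilde{\rho} f\cdot\nabla\mbox{div}u$ and $\int h'(\tilde{\rho})\nabla q\cdot\nabla f^0$ will yield $C\delta\mathcal{D}^2(t)$ by bounding $f,f^0$ and their gradients via \eqref{eq-f}, H\"older, Sobolev, and the smallness assumption \eqref{small}.

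\textbf{The main obstacle: the curl coupling.} The remaining RHS integral is $\mu\int\mbox{curl}^2 u\cdot\nabla\mbox{div}u$, which is not obviously small and does not vanish by a pointwise identity. I will use the vector identity $\mbox{curl}^2 u\cdot\nabla\mbox{div}u=\nabla\cdot(\mbox{curl}u\times\nabla\mbox{div}u)$ (since $\nabla\mbox{curl}^2 u$ has no divergence contribution against $\nabla\mbox{div}u$ after the cross product) to convert the integral into a boundary integral
\begin{equation*}
\mu\int_{\Omega}\mbox{curl}^2 u\cdot\nabla\mbox{div}u=-\mu\int_{\partial\Omega}(\mbox{curl}u\times n)\cdot\nabla\mbox{div}u=-\mu\int_{\partial\Omega}((2S(n)-\alpha I)u)\cdot\nabla\mbox{div}u,
\end{equation*}
using the Navier-slip boundary condition. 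Then Proposition~\ref{lem-boundary-key}, applied with $v=(2S(n)-\alpha I)u$ (which satisfies $v\cdot n|_{\partial\Omega}=0$ because $S(n)n$ is tangent for $\Omega=\mathbb{R}^3\setminus B_R$ as noted in Remark~\ref{rem1}) and $f=\mbox{div}u$, yields $|\int_{\partial\Omega}v\cdot\nabla f|\leq C\|\nabla u\|_{L^2}\|\nabla\mbox{div}u\|_{L^2}\leq\varepsilon\|\nabla\mbox{div}u\|_{L^2}^2+C\|\nabla u\|_{L^2}^2$. This boundary trick is the heart of the argument, since a na\"ive Cauchy--Schwarz after integrating by parts would require control of $\nabla\mbox{div}u\cdot n$ on $\partial\Omega$, which is unavailable under the Navier-slip conditions.

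\textbf{Conclusion.} Collecting all estimates and choosing $\varepsilon$ sufficiently small to absorb the $\varepsilon\|\nabla\mbox{div}u\|_{L^2}^2$ contributions into $(2\mu+\lambda)\|\nabla\mbox{div}u\|_{L^2}^2$ on the left, we obtain \eqref{result-2} with $c=\mu+\tfrac{\lambda}{2}$ (or any smaller positive constant depending only on $\mu,\lambda$). The trickiest bookkeeping is keeping the constants in front of $\|\nabla u\|_{L^2}^2$ clean and ensuring the nonlinear remainders really fall into the form $C\delta\mathcal{D}^2(t)$; the cross-term cancellation between the two multiplier choices is essential because otherwise we would see $\|\nabla q\|_{L^2}^2$ on both sides without a clean sign.
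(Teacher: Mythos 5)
Your proposal follows essentially the same route as the paper: the same paired multipliers producing the cancellation of the cross term $\int\tilde{\rho}h'(\tilde{\rho})\nabla q\cdot\nabla\mathrm{div}\,u$, the same conversion of $\mu\int\mathrm{curl}^2u\cdot\nabla\mathrm{div}\,u$ into a boundary integral handled by Proposition~\ref{lem-boundary-key}, and the same integration by parts with $\Delta\phi=q$ for the electric term. The one point needing care is that you test the differentiated continuity equation against $h'(\tilde{\rho})\nabla q$ rather than the paper's $\nabla(h'(\tilde{\rho})q)$, so the cancellation is only partial and leaves the extra term $\int_{\Omega}\tilde{\rho}\,h''(\tilde{\rho})\,q\,\nabla\tilde{\rho}\cdot\nabla\mathrm{div}\,u$; a direct H\"older--Young estimate of this term yields a \emph{non-small} constant on either $\|q\|_{H^1}^2$ or $\|\nabla\mathrm{div}\,u\|_{L^2}^2$ (recall $\|\nabla\tilde{\rho}\|_{H^3}$ is only bounded, not small), which would break both the stated form $\varepsilon\|q\|_{H^1}^2$ and the absorption into the dissipation. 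The bound you assert is still reachable, but only after one more integration by parts that throws the derivative onto $\mathrm{div}\,u$ and uses $\partial_n\tilde{\rho}|_{\partial\Omega}=0$ (which follows from $\nabla\tilde{\Phi}=\gamma\tilde{\rho}^{\gamma-2}\nabla\tilde{\rho}$ and $\partial_n\tilde{\Phi}|_{\partial\Omega}=0$) to kill the boundary contribution; alternatively, adopting the paper's multiplier $\nabla(h'(\tilde{\rho})q)$ makes the whole term cancel and leaves only remainders paired with $u$ and $\nabla u$, which tolerate large constants.
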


	\begin{proof}
		Computing the following integral
		\begin{equation*}
			\int_{\Omega}\nabla(\ref{equation-1})_1\cdot \nabla(h'(\tilde{\rho})q)-(\ref{equation-1})_2\cdot  \tilde{\rho}\nabla\mbox{div}u,
		\end{equation*}
		we have
		\begin{align}\label{step2}
			&(2\mu+\lambda)\|\nabla\mbox{div}u\|_{L^2}^2-\int_{\Omega}\tilde{\rho}u_{t}\cdot \nabla \mbox{div}u
			+\int_{\Omega}\nabla\mbox{div}(\tilde{\rho} u)\cdot \nabla(h'(\tilde{\rho})q)-\int_{\Omega}\tilde{\rho} \nabla(h'(\tilde{\rho})q)\cdot \nabla\mbox{div}u\nonumber\\[2mm]
			&=\mu\int_{\Omega}\mbox{curl}^2u\cdot\nabla\mbox{div}u-\int_{\Omega}\tilde{\rho}\nabla\phi\cdot \nabla \mbox{div}u
			+\int_{\Omega}\nabla f^{0}\cdot\nabla(h'(\tilde{\rho})q)-\int_{\Omega}\tilde{\rho} f\cdot\nabla\mbox{div}u.
		\end{align}

		Integrating by parts with $(u\cdot n)|_{\partial\Omega}=0$, it holds
		\begin{align*}
			-\int_{\Omega}\tilde{\rho}u_{t}\cdot \nabla \mbox{div}u&=\int_{\Omega}\tilde{\rho}\mbox{div}u_{t} \mbox{div}u+\int_{\Omega} u_t\cdot\nabla\tilde{\rho} \mbox{div}u\nonumber\\[2mm]
			&\geq \frac{1}{2}\frac{d}{dt}\int_{\Omega}\tilde{\rho}|\mbox{div}u|^2-\|\nabla\tilde{\rho}\|_{L^3}\|u_t\|_{L^6} \|\mbox{div}u\|_{L^2}\nonumber\\[2mm]
			&\geq \frac{1}{2}\frac{d}{dt}\int_{\Omega}\tilde{\rho}|\mbox{div}u|^2-\varepsilon\|\nabla u_t\|^2_{L^2}-C \|\mbox{div}u\|^2_{L^2},
		\end{align*}
		where we have used the estimate of $\tilde{\rho}$ in Lemma \ref{lem-rho}.
		
		A direct calculation gives
		\begin{align*}
			&\int_{\Omega}\nabla\mbox{div}(\tilde{\rho} u)\cdot \nabla(h'(\tilde{\rho})q)-\int_{\Omega}\tilde{\rho} \nabla(h'(\tilde{\rho})q)\cdot \nabla\mbox{div}u\nonumber\\[2mm]
			&=\int_{\Omega}(u\nabla^2\tilde{\rho} +\nabla\tilde{\rho} \nabla u+\mbox{div}u\nabla\tilde{\rho})\cdot \nabla(h'(\tilde{\rho})q)\nonumber\\[2mm]
			&\geq \varepsilon \|\nabla(h'(\tilde{\rho})q)\|_{L^2}^2-C\|\nabla u\|_{L^2}^2.
		\end{align*}

		Next, we shall estimate the terms on the right-hand side of (\ref{step2}).
		Integrating by parts with $(\mbox{curl}u \times n)|_{\partial\Omega}=((2S(n)-\alpha I)u)_{\tau}$, and applying Proposition \ref{lem-boundary-key} with $v=((2S(n)-\alpha I)u)_{\tau}$ since we have the boundary condition $(u \cdot n)|_{\pl \Omega}=0$, $f=\mbox{div} u$, we can obtain
		\begin{align}
			\int_{\Omega}\mbox{curl}^2u\cdot\nabla\mbox{div}udx&=- \int_{\partial\Omega}(\mbox{curl}u\times n)\cdot\nabla\mbox{div}u \nonumber\\[2mm]
			&=- \int_{\partial\Omega}((2S(n)-\alpha I)u)_{\tau}\cdot\nabla\mbox{div}u\nonumber\\[2mm]
			&\leq C\|\nabla\mbox{div}u\|_{L^2}\|\nabla ((2S(n)-\alpha I)u)\|_{L^2}\nonumber\\[2mm]
			&\leq \epsilon \|\nabla\mbox{div}u\|_{L^2}^2+C\|\nabla u\|_{L^2}^2,
		\end{align}
		for any $\epsilon>0$.

		Integrating by parts and using the boundary condition $(\nabla\phi\cdot n)|_{\partial\Omega}=0$, one has
		\begin{align*}
			-\int_{\Omega}\tilde{\rho}\nabla\phi\cdot \nabla \mbox{div}u
			&=\int_{\Omega}\tilde{\rho}\Delta\phi \mbox{div}u +\int_{\Omega}\nabla \tilde{\rho}\cdot\nabla \phi \mbox{div}u\nonumber\\[2mm]
			&\leq \|\tilde{\rho}\|_{L^{\infty}}\|\Delta\phi\|_{L^2}\|\mbox{div}u\|_{L^2}+\|\nabla \tilde{\rho}\|_{L^3}\|\nabla \phi\|_{L^6}\|\mbox{div}u\|_{L^2}\nonumber\\[2mm]
			&\leq \varepsilon\|q\|_{L^2}^2+C\|\nabla u\|_{L^2}^2,
		\end{align*}
		due to (\ref{phi-1}).

		It remains to control the nonlinear terms on (\ref{step2}). One can easily check that
		\begin{align*}
			\int_{\Omega}\nabla f^{0}\cdot\nabla(h'(\tilde{\rho})q)\leq C\delta\left(\|\nabla q\|^2_{H^1}+\|\nabla^2u\|_{L^2}^2\right),
		\end{align*}
		and
		\begin{align*}
			\int_{\Omega}\tilde{\rho} f\cdot\nabla\mbox{div}u&\leq \epsilon \|\nabla\mbox{div}u\|^2_{L^2}+C\|f\|^2_{L^2}\nonumber\\[2mm]
			&\leq \epsilon \|\nabla\mbox{div}u\|^2_{L^2}+C\left(\|u\|^2_{L^{\infty}}\|\nabla u\|^2_{L^2}
			+\|q\|^2_{L^{\infty}}\|\nabla q\|^2_{L^2}+\|q\|^2_{L^{\infty}}\|\nabla^2 u\|^2_{L^2}\right)\nonumber\\[2mm]
			&\leq  \epsilon \|\nabla\mbox{div}u\|^2_{L^2}+C\delta\|\nabla u\|^2_{H^1}.
		\end{align*}
		Putting the above estimates into (\ref{step2}) and choosing $\epsilon$ small, we conclude (\ref{result-2}).
	\end{proof}

	In order to obtain the energy estimates of $\nabla u$ , in view of Lemma \ref{prop-imp},
	we need to control $\mbox{curl} u$, which is given in the following lemma.
	\begin{lem}\label{lem-w}
		Under the assumptions in Proposition \ref{prop} and for any $\varepsilon>0$, there exist positive constants $c$ depending on $\mu$ and $\lambda$ and $C$ independent of $t$ such that
		\begin{align}\label{result-3}
			&\frac{1}{2}\frac{d}{dt}\int_{\Omega}\tilde{\rho}|\mbox{curl} u|^2+c\|\mbox{curl}^2u\|_{L^2}^2\nonumber\\[2mm]
			&\leq \varepsilon\left(\|\nabla\mbox{div}u\|_{L^2}^2+\|\nabla u_t\|_{L^2}^2+\|q\|_{H^1}^2\right)+C\|\nabla u\|_{L^2}^2+C\delta\mathcal{D}^2(t).
		\end{align}
	\end{lem}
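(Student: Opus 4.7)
The plan is to mirror Lemma \ref{lem-div}, replacing the multiplier $\tilde{\rho}\nabla\mbox{div} u$ by $\tilde{\rho}\,\mbox{curl}^2 u$. Taking $\int_\Omega \eqref{equation-u}\cdot\tilde{\rho}\,\mbox{curl}^2 u$ puts $\mu\|\mbox{curl}^2 u\|_{L^2}^2$ on the left and produces, on the right, the inertial integral $\int\tilde{\rho} u_t\cdot\mbox{curl}^2 u$, the cross integral $-(2\mu+\lambda)\int\nabla\mbox{div} u\cdot\mbox{curl}^2 u$, the pressure integral $\int\tilde{\rho}\nabla(h'(\tilde{\rho})q)\cdot\mbox{curl}^2 u$, the electric integral $-\int\tilde{\rho}\nabla\phi\cdot\mbox{curl}^2 u$, and the nonlinear integral $\int\tilde{\rho} f\cdot\mbox{curl}^2 u$.

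Every integral above is reshaped through the identity $\int_\Omega F\cdot\mbox{curl}^2 u=\int_\Omega\mbox{curl} F\cdot\mbox{curl} u-\int_{\partial\Omega}(F\times\mbox{curl} u)\cdot n$ together with the boundary condition $\mbox{curl} u\times n=(2S(n)-\alpha I)u$. Choosing $F=\tilde{\rho} u_t$ for the inertia, the volume part produces $\tfrac{1}{2}\tfrac{d}{dt}\int\tilde{\rho}|\mbox{curl} u|^2$ (since $\tilde{\rho}_t=0$) plus a remainder $\int(\nabla\tilde{\rho}\times u_t)\cdot\mbox{curl} u$ bounded by $\varepsilon\|\nabla u_t\|_{L^2}^2+C\|\nabla u\|_{L^2}^2$ using $\|\nabla\tilde{\rho}\|_{L^3}\lesssim 1$ from Lemma \ref{lem-rho}; the boundary part is $-\int_{\partial\Omega}\tilde{\rho} u_t\cdot(2S(n)-\alpha I)u=-\tfrac{1}{2}\tfrac{d}{dt}\int_{\partial\Omega}\tilde{\rho} u\cdot(2S(n)-\alpha I)u$ by symmetry of $2S(n)-\alpha I$. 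Because $\alpha I-2S(n)$ is positive semi-definite on $\partial B_R$ (eigenvalues $\alpha,\alpha+2/R,\alpha+2/R$, see Remark \ref{rem1}), moving this boundary time derivative to the left only augments a non-negative boundary energy.

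For the remaining linear integrals the operator $\mbox{curl}$ annihilates $\nabla\mbox{div} u$ outright, and after the splits $\tilde{\rho}\nabla(h'(\tilde{\rho})q)=\nabla(\tilde{\rho} h'(\tilde{\rho})q)-h'(\tilde{\rho})q\nabla\tilde{\rho}$ and $\tilde{\rho}\nabla\phi=\nabla(\tilde{\rho}\phi)-\phi\nabla\tilde{\rho}$ it kills the gradient portions as well. What remains is of two kinds: (i) boundary integrals $\int_{\partial\Omega}(2S(n)-\alpha I)u\cdot\nabla(\text{scalar})$, for which Proposition \ref{lem-boundary-key} gives the bound $\|\nabla u\|_{L^2}\|\nabla(\text{scalar})\|_{L^2}$, and (ii) volume integrals $\int(\nabla\tilde{\rho}\times\nabla(\text{scalar}))\cdot\mbox{curl} u$, bounded by $\|\nabla\tilde{\rho}\|_{L^\infty}\|\nabla(\text{scalar})\|_{L^2}\|\mbox{curl} u\|_{L^2}$. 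With the scalar being either $\tilde{\rho} h'(\tilde{\rho})q$ or $\tilde{\rho}\phi$, Cauchy--Young together with the Poisson estimate $\|\nabla^2\phi\|_{L^2}\leq C\|q\|_{L^2}$ from Lemma \ref{lem-neu} folds all these contributions into $\varepsilon(\|\nabla\mbox{div} u\|_{L^2}^2+\|q\|_{H^1}^2)+C\|\nabla u\|_{L^2}^2$. The nonlinear term is bounded by $\varepsilon\|\mbox{curl}^2 u\|_{L^2}^2+C\|f\|_{L^2}^2\leq\varepsilon\|\mbox{curl}^2 u\|_{L^2}^2+C\delta\mathcal{D}^2(t)$ using \eqref{eq-f} and the smallness \eqref{small}.

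Summing and choosing $\varepsilon$ small enough to absorb the $\varepsilon\|\mbox{curl}^2 u\|_{L^2}^2$ terms into $\mu\|\mbox{curl}^2 u\|_{L^2}^2$ delivers \eqref{result-3}. The main obstacle is organizing every boundary integral into the canonical form (tangential)$\cdot\nabla$(scalar) required by Proposition \ref{lem-boundary-key}; this rests structurally on the tangential nature of $\mbox{curl} u\times n$ dictated by the Navier-slip condition and on the repeated rewriting $\tilde{\rho}\nabla\psi=\nabla(\tilde{\rho}\psi)-\psi\nabla\tilde{\rho}$ to expose a gradient. Equally essential is that the inertial boundary time derivative carry the favorable sign, which is guaranteed by the exact geometry of $\partial B_R$ and the hypothesis $\alpha\geq 0$.
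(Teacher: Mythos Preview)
Your overall strategy---multiply \eqref{equation-u} by $\tilde\rho\,\mbox{curl}^2u$, move one curl via integration by parts, and feed every boundary piece through Proposition~\ref{lem-boundary-key}---is exactly the paper's, and most of the bookkeeping is right. Two points, however, deserve attention.

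\textbf{The inertial boundary term.} You convert $-\int_{\partial\Omega}\tilde\rho\,(2S(n)-\alpha I)u\cdot u_t$ into $\tfrac12\tfrac{d}{dt}\int_{\partial\Omega}\tilde\rho\,(\alpha I-2S(n))u\cdot u$ and then argue by sign. But a time derivative of a nonnegative quantity need not be nonnegative, so moving it to the left does \emph{not} yield \eqref{result-3} as stated; you would obtain a modified inequality with an extra boundary energy on the left. The paper instead simply estimates the boundary term via the trace inequality \eqref{est-boundary}: $\bigl|\int_{\partial\Omega}\tilde\rho\,(2S(n)-\alpha I)u\cdot u_t\bigr|\le C\|\nabla u\|_{L^2}\|\nabla u_t\|_{L^2}\le\varepsilon\|\nabla u_t\|_{L^2}^2+C\|\nabla u\|_{L^2}^2$, which is precisely why $\varepsilon\|\nabla u_t\|_{L^2}^2$ appears on the right of \eqref{result-3}. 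Your variant would still serve in the final a~priori estimate, but it does not prove the lemma as written.

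\textbf{The electric term.} Your H\"older splitting $\|\nabla\tilde\rho\|_{L^\infty}\|\nabla(\text{scalar})\|_{L^2}\|\mbox{curl}\,u\|_{L^2}$ applied with scalar $=\tilde\rho\phi$ (or $\phi$) produces $\|\nabla\phi\|_{L^2}$, and Lemma~\ref{lem-neu} gives no bound $\|\nabla\phi\|_{L^2}\lesssim\|q\|_{L^2}$ on an exterior domain. The paper circumvents this by pairing differently: for the volume piece it uses $\|\nabla\tilde\rho\|_{L^3}\|\nabla\phi\|_{L^6}\|\mbox{curl}\,u\|_{L^2}$ together with $\|\nabla\phi\|_{L^6}\le C\|\nabla^2\phi\|_{L^2}\le C\|q\|_{L^2}$, and for the boundary piece it uses the trace estimate \eqref{est-boundary} on $\nabla\phi$ directly to obtain $\|\nabla^2\phi\|_{L^2}$. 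With this correction the rest of your outline goes through.
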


	\begin{proof}
		Multiplying the equation (\ref{equation-u}) by $\tilde{\rho}\mbox{curl}^2u$, and integrating it over $\Omega$, we will get
		\begin{align}\label{step3}
			&\int_{\Omega}\tilde{\rho} u_{t}\cdot\mbox{curl}^2u+\mu\int_{\Omega}|\mbox{curl}^2u|^2\nonumber\\[2mm]
			&=-\int_{\Omega}\tilde{\rho}\nabla (h'(\tilde{\rho})q)\cdot\mbox{curl}^2u+(2\mu+\lambda)\int_{\Omega}(2\mu+\lambda)\nabla \mbox{div}u\cdot\mbox{curl}^2u+\int_{\Omega}\tilde{\rho}\nabla\phi\cdot\mbox{curl}^2u+\int_{\Omega}\tilde{\rho} f\cdot\mbox{curl}^2u.
		\end{align}
		Integrating by parts for both $t$- and $x$-variables,  the first term on the left-hand side becomes
		\begin{align*}
			\int_{\Omega}\tilde{\rho} u_{t}\cdot\mbox{curl}^2u&=\int_{\Omega}\tilde{\rho}\mbox{curl}u_{t}\cdot\mbox{curl}u
			+\int_{\Omega}\nabla\tilde{\rho}\times u_{t}\cdot\mbox{curl}u
			-\int_{\partial\Omega}\tilde{\rho}\mbox{curl}u\times n\cdot u_{t}\nonumber\\[2mm]
			&=\frac{1}{2}\frac{d}{dt}\int_{\Omega}\tilde{\rho}|\mbox{curl}u|^2
			+\int_{\Omega}\nabla\tilde{\rho}\times u_{t}\cdot\mbox{curl}u
			-\int_{\partial\Omega}\tilde{\rho}(2S(n)-\alpha I)u\cdot u_{t}\nonumber\\[2mm]
			&\geq \frac{1}{2}\frac{d}{dt}\int_{\Omega}\tilde{\rho}|\mbox{curl}u|^2
			-\|\nabla\tilde{\rho}\|_{L^3} \|u_{t}\|_{L^6}\|\nabla u\|_{L^2}
			-C\|\nabla u\|\|\nabla u_t\|\nonumber\\[2mm]
			&\geq \frac{1}{2}\frac{d}{dt}\int_{\Omega}\tilde{\rho}|\mbox{curl}u|^2
			-\varepsilon \|\nabla u_{t}\|^2_{L^2}
			-C \|\nabla u\|^2_{L^2},
		\end{align*}
		where we used the boundary condition $(\mbox{curl}u\times n)|_{\partial\Omega}=(2S(n)-\alpha I)u$ and the trace theorem (\ref{est-boundary}) to deal with the boundary term.

		For the first term on the right-hand side of (\ref{step3}), we  integrate by parts with the boundary condition $\mbox{curl}\times n=(2S(n)- \alpha I)u$  again to  get
		\begin{align*}
			-\int_{\Omega}\tilde{\rho}\nabla (h'(\tilde{\rho})q)\cdot\mbox{curl}^2u&=-\int_{\Omega}\nabla\tilde{\rho}\times\nabla (h'(\tilde{\rho})q)\cdot\mbox{curl}u+\int_{\partial\Omega}\tilde{\rho}\nabla (h'(\tilde{\rho})q)\cdot(\mbox{curl}u\times n)\nonumber\\[2mm]
			&=\int_{\Omega}\nabla\tilde{\rho}\times\nabla (h'(\tilde{\rho})q)\cdot\mbox{curl}u+\int_{\partial\Omega}\tilde{\rho}(2S(n)-\alpha I)u\cdot\nabla (h'(\tilde{\rho})q)\nonumber\\[2mm]
			&\leq C\|\nabla\tilde{\rho}\|_{L^{\infty}}\|\nabla u\|_{L^2} \|\nabla (h'(\tilde{\rho})q)\|_{L^2}+C\|\nabla u\|_{L^2} \|\nabla (h'(\tilde{\rho})q)\|_{L^2}\nonumber\\[2mm]
			&\leq \varepsilon \|\nabla (h'(\tilde{\rho})q)\|_{L^2}^2+C \|\nabla u\|_{L^2}^2,
		\end{align*}
		where we used Proposition \ref{lem-boundary-key} for choosing $v=(2S(n)-\alpha I)u$ and $f=h'(\tilde{\rho})q$ to estimate the boundary term.
		Similarly, the third term on the right-hand side of (\ref{step3}) becomes
		\begin{align*}
			(2\mu+\lambda)\int_{\Omega}\nabla \mbox{div}u\cdot\mbox{curl}^2u&= -(2\mu+\lambda)\int_{\pl \Omega}\mbox{curl}u\times n  \cdot \nabla \mbox{div}u\nonumber\\
			&= - \int_{\pl \Omega}(2\mu+\lambda) (2S(n)-\alpha I)u \cdot \nabla \mbox{div}u\nonumber\\
			&\leq \varepsilon \|\nabla \mbox{div}u\|_{L^2}^2+C \|\nabla u\|_{L^2}^2,
		\end{align*}
		where we also applied the Proposition \ref{lem-boundary-key}.

		Integrating by parts and using (\ref{est-boundary}), we deduce that
		\begin{align*}
			\int_{\Omega}\tilde{\rho}\nabla \phi\cdot\mbox{curl}^2u
			&=\int_{\Omega}\nabla\tilde{\rho}\times\nabla \phi\cdot\mbox{curl}u-\int_{\partial\Omega}\tilde{\rho}(2S(n)-\alpha I)u\cdot\nabla \phi\nonumber\\[2mm]
			&\leq \|\nabla\tilde{\rho}\|_{L^3}\|\nabla \phi\|_{L^6}\|\mbox{curl}u\|_{L^2}+C\|\nabla u\|_{L^2}\|\nabla^2\phi\|_{L^2}\nonumber\\[2mm]
			&
			\leq \varepsilon \|q\|_{L^2}^2+C \|\nabla u\|_{L^2}^2,
		\end{align*}
		thanks to (\ref{phi-1}) with (\ref{equation-1}) such that $\Delta \phi = q$ and $(\nabla \phi \cdot n)|_{\pl \Omega}=0$. Finally, the nonlinear term can be controlled by
		\begin{align*}
			\int_{\Omega}\tilde{\rho}f\cdot\mbox{curl}^2u&\leq C\left(\|u\|_{L^{\infty}}\|\nabla u\|_{L^2}
			+\|q\|_{L^{\infty}}\|\nabla q\|_{L^2}+\|q\|_{L^{\infty}}\|\nabla^2 u\|_{L^2}\right) \|\mbox{curl}^2 u\|_{L^2}\nonumber\\[2mm]
			&\leq C\delta(\|u\|_{L^{\infty}},\|q\|_{L^{\infty}})\left(\|\nabla q\|^2_{L^2}+\|\nabla u\|^2_{L^2}+\|\nabla^2 u\|^2_{L^2}\right).
		\end{align*}
		Combining the above estimates and noting the fact $\|\nabla (h'(\tilde{\rho})q)\|^2\leq C\left(\|q\|^2+\|\nabla q\|^2\right)$ by the boundedness of $|h'(\tilde{ \rho})|$ and $|h''(\tilde{ \rho})|$ as shown in (\ref{hb}), we end the proof.
	\end{proof}

	\begin{lem}\label{lem-q}
		Suppose that the conditions in Proposition \ref{prop} hold, there is a positive constant $C$ independent of $t$, such that
		\begin{align}\label{result-4}
			\frac{d}{dt}\int_{\Omega}u\cdot\nabla(h'(\tilde{\rho})q)+\|\sqrt{h'(\tilde{\rho})}q\|^2 +\|\nabla (h'(\tilde{\rho})q)\|^2
			\leq C\left(\|q_t\|^2+\|\nabla u\|_{H^1}^2\right)+C\delta\mathcal{D}^2(t).
		\end{align}
	\end{lem}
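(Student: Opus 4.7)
The plan is to multiply the momentum equation $\eqref{equation-1}_2$ by $\nabla(h'(\tilde\rho)q)$ and integrate over $\Omega$. This choice of multiplier is dictated by two structural identities: (i) the pressure gradient self-contracts to give precisely $\|\nabla(h'(\tilde\rho)q)\|_{L^2}^2$, and (ii) the potential term, after integration by parts against the homogeneous Neumann condition $(\nabla\phi\cdot n)|_{\partial\Omega}=0$ together with the Poisson equation $\Delta\phi=q$, yields $\int_\Omega h'(\tilde\rho)q^2=\|\sqrt{h'(\tilde\rho)}\,q\|_{L^2}^2$. This is the only place in the energy argument where a direct $L^2$ control on $q$ (not merely on $\nabla q$) appears, and it comes entirely from the electrostatic coupling --- which is why this cross-term lemma is needed.

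After performing the test, the time-derivative term is rewritten as
\begin{equation*}
\int_\Omega u_t\cdot\nabla(h'(\tilde\rho)q)=\frac{d}{dt}\int_\Omega u\cdot\nabla(h'(\tilde\rho)q)-\int_\Omega u\cdot\nabla(h'(\tilde\rho)q_t),
\end{equation*}
and the trailing integral is integrated by parts using $(u\cdot n)|_{\partial\Omega}=0$ to obtain $\int_\Omega q_t\,\mbox{div}(u\,h'(\tilde\rho))$, which is controlled by $C(\|q_t\|^2+\|\nabla u\|_{L^2}^2)+C\delta\mathcal{D}^2(t)$ via Cauchy--Schwarz together with the bounds \eqref{hb} and the Sobolev embedding from Lemma \ref{sobolev inequ}. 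The viscous term is bounded directly by $\varepsilon\|\nabla(h'(\tilde\rho)q)\|^2+C\|\nabla^2u\|_{L^2}^2$ using $\tilde\rho^{-1}\in L^\infty$ and the estimates of Lemma \ref{lem-rho}, with the first piece absorbed on the left for small $\varepsilon$. The nonlinear term $\int f\cdot\nabla(h'(\tilde\rho)q)$ is handled using the structure of $f$ in \eqref{eq-f}: under the a priori smallness \eqref{small}, every summand in $f$ is the product of a small factor ($u$, $q$, or $\nabla \tilde\rho$-type) with $\nabla u$, $\nabla q$, or $\nabla^2 u$, so Cauchy--Schwarz absorbs it into $\varepsilon\|\nabla(h'(\tilde\rho)q)\|^2+C\delta\mathcal{D}^2(t)$.

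There is no substantive obstacle beyond careful bookkeeping of signs and boundary terms. The delicate point is confirming that the Poisson term produces the correct sign: starting from the term $-\nabla\phi$ on the left of $\eqref{equation-1}_2$, contraction with $\nabla(h'(\tilde\rho)q)$ and subsequent integration by parts with vanishing normal flux gives $+\int_\Omega h'(\tilde\rho)q^2$, which upon transfer to the left joins $\|\nabla(h'(\tilde\rho)q)\|^2$ as a positive dissipative contribution. After collecting all terms, choosing $\varepsilon$ small enough to absorb the $\varepsilon\|\nabla(h'(\tilde\rho)q)\|^2$ pieces on the left, and recalling that $\|q\|_{L^2}^2\les\|\sqrt{h'(\tilde\rho)}q\|^2$ by \eqref{hb}, the desired estimate \eqref{result-4} follows.
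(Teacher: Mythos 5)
Your proposal is correct and follows essentially the same route as the paper: test $\eqref{equation-1}_2$ with $\nabla(h'(\tilde\rho)q)$, pull the time derivative out of $\int u_t\cdot\nabla(h'(\tilde\rho)q)$ and integrate by parts with $(u\cdot n)|_{\partial\Omega}=0$, convert $-\int\nabla\phi\cdot\nabla(h'(\tilde\rho)q)$ into $\int h'(\tilde\rho)q^2$ via the Neumann condition and $\Delta\phi=q$, and absorb the viscous and nonlinear terms by Cauchy--Schwarz. The only quibble is that the trailing integral should read $\int_\Omega h'(\tilde\rho)q_t\,\mbox{div}u$ rather than $\int_\Omega q_t\,\mbox{div}(u\,h'(\tilde\rho))$; the discrepancy $\int_\Omega q_t\,u\cdot\nabla h'(\tilde\rho)$ is in any case bounded by $C(\|q_t\|^2+\|\nabla u\|^2)$, so the conclusion is unaffected.
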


	\begin{proof}
		Computing $\int_{\Omega}(\ref{equation-1})_2\cdot \nabla(h'(\tilde{\rho})q)$, one directly has
		\begin{align}\label{step4}
			&\|\nabla(h'(\tilde{\rho})q)\|^2+\int_{\Omega}u_t\cdot \nabla(h'(\tilde{\rho})q)
			-\int_{\Omega}\nabla\phi\cdot\nabla(h'(\tilde{\rho})q)\nonumber\\[2mm]
			&=\int_{\Omega}\tilde{\rho}^{-1}\left(\mu\Delta u+(\mu+\lambda)\nabla\mbox{div}u\right)\cdot \nabla(h'(\tilde{\rho})q)+\int_{\Omega}f\cdot\nabla(h'(\tilde{\rho})q).
		\end{align}
		
		Integrating by parts for both $t$- and $x$-variables with the boundary condition $(u\cdot n)|_{\partial\Omega}=0$,  the second term on the left-hand side can be estimated by
		\begin{align*}
			\int_{\Omega}u_t\cdot \nabla(h'(\tilde{\rho})q)&=\frac{d}{dt}\int_{\Omega}u\cdot \nabla(h'(\tilde{\rho})q)-\int_{\Omega}u\cdot \nabla(h'(\tilde{\rho})q_t)\nonumber\\[2mm]
			&=\frac{d}{dt}\int_{\Omega}u\cdot \nabla(h'(\tilde{\rho})q)+\int_{\Omega}h'(\tilde{\rho})q_t\mbox{div}u\nonumber\\[2mm]
			&\geq \frac{d}{dt}\int_{\Omega}u\cdot \nabla(h'(\tilde{\rho})q)-C(\|q_t\|_{L^2}^2+\|\mbox{div}u\|_{L^2}^2),
		\end{align*}
		where we used the fact that $0<C_1<h'(\tilde{ \rho})<C_2$.
		
		Integrating by parts with the boundary condition $\nabla \phi \cdot n|_{\pl \Omega}=0$ and noting that $\Delta\phi=q$,  the third term on the left hand side of (\ref{step4}) is
		\begin{align*}
			-\int_{\Omega}\nabla\phi\cdot\nabla(h'(\tilde{\rho})q)=\int_{\Omega} h'(\tilde{\rho})q\Delta\phi =\int_{\Omega} h'(\tilde{\rho}) q^2.
		\end{align*}
		Applying Cauchy's inequality directly, the integrals on the right-hand side of (\ref{step4}) can be controlled by
		\begin{align*}
			RHS&\leq\frac{1}{2}\|\nabla(h'(\tilde{\rho})q)\|_{L^2}^2+C\|\nabla^2u\|_{L^2}^2+C\|f\|_{L^2}^2 \nonumber\\[2mm]
			&\leq
			\frac{1}{2}\|\nabla(h'(\tilde{\rho})q)\|_{L^2}^2+C\|\nabla^2u\|_{L^2}^2+
			C\delta\left(\|\nabla q\|_{L^2}^2+\|\nabla u\|^2_{H^1}\right).
		\end{align*}
		Plugging the above estimates into (\ref{step4}), we then conclude (\ref{result-4}).
	\end{proof}

	Next, we prove estimates for the first-order temporal derivatives $(u_t,q_t,\nabla\phi_t)$.
	\begin{lem}\label{lem-ut}
		Under the assumptions in Proposition \ref{prop} and $\alpha>0$, then for the solutions $(q,u,\nabla\phi)$ of (\ref{equation-1}) in any exterior domains $\Omega=\mathbb{R}^3\setminus B_{R}$,
		it holds that
		\begin{align}\label{result5}
			\frac{1}{2}\frac{d}{dt}\int_{\Omega}\left(\tilde{\rho}|u_t|^2+h'(\tilde{\rho})|q_t|^2+|\nabla \phi_t|^2\right)+c\|\nabla u_t\|^2
			\leq C\delta\mathcal{D}^2(t),
		\end{align}
		where $c$ is a positive constant depends on $\mu$ and $\lambda$ and $C>0$ is a constant independent of $t$. If $\alpha^{-}\neq0$, then the estimate (\ref{result5}) is also valid for exterior domains $\Omega=\mathbb{R}^3\setminus B_{R}$ with  small $R$.
	\end{lem}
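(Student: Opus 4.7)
\textbf{Proof plan for Lemma \ref{lem-ut}.} The plan is to derive this estimate as the time-differentiated analogue of the basic energy estimate from Lemma \ref{lem-basic}. First I would differentiate the reformulated system \eqref{equation-1} in $t$ to obtain
\begin{equation*}
q_{tt}+\mbox{div}(\tilde{\rho}u_t)=f^{0}_t, \qquad
u_{tt}+\nabla(h'(\tilde{\rho})q_t)+\mu\tilde{\rho}^{-1}\mbox{curl}^2u_t-(2\mu+\lambda)\tilde{\rho}^{-1}\nabla\mbox{div}u_t-\nabla\phi_t=f_t, \qquad
\Delta\phi_t=q_t,
\end{equation*}
together with the linearised boundary conditions $u_t\cdot n=0$, $\mbox{curl}u_t\times n=(2S(n)-\alpha I)u_t$, $\nabla\phi_t\cdot n=0$ on $\pl\Omega$. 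I would then compute
\begin{equation*}
\int_\Omega h'(\tilde{\rho})q_t\bigl(q_{tt}+\mbox{div}(\tilde{\rho}u_t)\bigr)+\tilde{\rho} u_t\cdot\bigl(u_{tt}+\nabla(h'(\tilde{\rho})q_t)+\mu\tilde{\rho}^{-1}\mbox{curl}^2u_t-(2\mu+\lambda)\tilde{\rho}^{-1}\nabla\mbox{div}u_t-\nabla\phi_t\bigr)=\mbox{(nonlinear r.h.s.)}
\end{equation*}
following the template of Lemma \ref{lem-basic}.

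Most of the structural terms behave exactly as in Lemma \ref{lem-basic}. The cross terms $\int h'(\tilde{\rho})q_t\,\mbox{div}(\tilde{\rho}u_t)+\int\tilde{\rho}u_t\cdot\nabla(h'(\tilde{\rho})q_t)$ cancel after integration by parts using $u_t\cdot n=0$; the dissipation is generated by the identity $\int u_t\cdot(\mu\mbox{curl}^2u_t-(2\mu+\lambda)\nabla\mbox{div}u_t)=\mu\|\mbox{curl}u_t\|_{L^2}^2+(2\mu+\lambda)\|\mbox{div}u_t\|_{L^2}^2+\mbox{boundary}$, which is converted to $c\|\nabla u_t\|_{L^2}^2$ via Proposition \ref{prop-imp}. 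For the Poisson coupling I would substitute $\mbox{div}(\tilde{\rho}u_t)=-q_{tt}+f^0_t$ from the continuity equation and then use $q_{tt}=\Delta\phi_{tt}$ together with the Neumann condition $\nabla\phi_{tt}\cdot n=0$ to rewrite
\begin{equation*}
-\int_\Omega\tilde{\rho}u_t\cdot\nabla\phi_t=\int_\Omega\phi_t\mbox{div}(\tilde{\rho}u_t)=-\int_\Omega\phi_t\Delta\phi_{tt}+\int_\Omega\phi_t f^0_t=\frac{1}{2}\frac{d}{dt}\|\nabla\phi_t\|_{L^2}^2+\int_\Omega\phi_t f^0_t,
\end{equation*}
which furnishes the $\|\nabla\phi_t\|_{L^2}^2$ contribution to the total energy.

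The main obstacle will be the boundary integral generated by the curl: using $\mbox{curl}u_t\times n=(2S(n)-\alpha I)u_t$ one obtains
\begin{equation*}
-\int_{\pl\Omega}(\mbox{curl}u_t\times n)\cdot u_t=-2\int_{\pl\Omega}S(n)u_t\cdot u_t+\int_{\pl\Omega}\alpha^{+}|u_t|^2-\int_{\pl\Omega}\alpha^{-}|u_t|^2.
\end{equation*}
By Remark \ref{rem1} the eigenvalues of $S(n)$ on $\Omega=\mathbb{R}^3\setminus B_R$ are nonpositive, so $-\int_{\pl\Omega}S(n)u_t\cdot u_t\geq 0$, and together with $\int_{\pl\Omega}\alpha^{+}|u_t|^2$ these contribute favourably to the left-hand side; when $\alpha\geq 0$ they simply vanish. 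The delicate piece is the $\alpha^{-}$ term, which by Proposition \ref{prop-trace} satisfies $\int_{\pl\Omega}\alpha^{-}|u_t|^2\leq C^{*}|\alpha^{-}|_{L^{\infty}}R\|\nabla u_t\|_{L^2}^2$ and can be absorbed into the dissipation $c\|\nabla u_t\|_{L^2}^2$ only provided $R$ is taken small enough --- exactly the condition under which the second statement of the lemma is asserted. The remaining task is to bound the nonlinear right-hand side $\int h'(\tilde{\rho})q_t f^0_t+\int\tilde{\rho}u_t\cdot f_t+\int\phi_t f^0_t$ by $C\delta\mathcal{D}^2(t)$. Since $f^0_t=-\mbox{div}(q_t u+qu_t)$, integration by parts against $h'(\tilde{\rho})q_t$ (using $u\cdot n=u_t\cdot n=0$) produces terms of the form $\int(u\cdot\nabla q_t)q_t$, $\int(u_t\cdot\nabla q_t)q$ and lower-order variants, each estimable by H\"older, Sobolev (Lemma \ref{sobolev inequ}), and the smallness assumption \eqref{small}. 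The derivative $f_t$ is the genuinely worst term because it contains $q_t$ multiplying $\mu\Delta u+(\mu+\lambda)\nabla\mbox{div}u$ and $q$ multiplying $\mu\Delta u_t+(\mu+\lambda)\nabla\mbox{div}u_t$; these are controlled by $\|q_t\|_{L^3}\|\nabla^2 u\|_{L^2}\|u_t\|_{L^6}$ and $\|q\|_{L^{\infty}}\|\nabla^2 u_t\|_{L^2}\|u_t\|_{L^2}$ respectively, both of which fit into $C\delta\mathcal{D}^2(t)$ given that $\mathcal{D}$ contains $\|\nabla u_t\|_{H^1}$ and $\|q_t\|_{H^1}$. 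Finally $\int\phi_t f^0_t$ is handled via $\|\phi_t\|_{L^6}\leq C\|\nabla\phi_t\|_{L^2}\leq C\delta$ and the same H\"older chain. Collecting everything and choosing $R$ small in the case $\alpha^{-}\neq 0$ yields \eqref{result5}.
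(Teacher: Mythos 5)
Your proposal is correct and follows essentially the same route as the paper, which simply differentiates the system in time, notes that the boundary conditions are preserved under $\partial_t$, and repeats the argument of Lemma \ref{lem-basic} verbatim (the paper omits exactly the details you spell out, including the treatment of the $\alpha^{-}$ boundary term via Proposition \ref{prop-trace} and the choice of small $R$). One minor correction: for the contribution of $\bigl(\tfrac{1}{q+\tilde{\rho}}-\tfrac{1}{\tilde{\rho}}\bigr)\Delta u_t$ in $f_t$ you should use the $L^3$--$L^2$--$L^6$ H\"older split $\|q\|_{L^3}\|\nabla^2 u_t\|_{L^2}\|u_t\|_{L^6}\leq C\delta\|\nabla^2u_t\|_{L^2}\|\nabla u_t\|_{L^2}$ rather than $\|q\|_{L^{\infty}}\|\nabla^2 u_t\|_{L^2}\|u_t\|_{L^2}$, since $\|u_t\|_{L^2}$ is not part of $\mathcal{D}(t)$ and the latter pairing does not produce a bound of the form $C\delta\mathcal{D}^2(t)$.
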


	\begin{proof}
		Computing the following integral
		\begin{equation*}
			 \int_{\Omega}\partial_{t}(\ref{equation-1})_{1}h'(\tilde{\rho})q_t+\partial_{t}(\ref{equation-1})_2\cdot\tilde{\rho} u_t,
		\end{equation*}
		we get
		\begin{align*}
			&\int_{\Omega} h'(\tilde{ \rho}) q_{tt} q_t + h'(\tilde{ \rho}) \mbox{div}(\tilde{ \rho} u_t) q_t + \tilde{ \rho}  u_{tt} \cdot u_t + \tilde{ \rho}\nabla(h'(\tilde{ \rho})q_t) \cdot u_t - (\mu \Delta u_t + (\mu +\lambda) \nabla \mbox{div} u_t) \cdot u_t - \tilde{ \rho} \nabla \phi_{t} \cdot u_t \nonumber\\[2mm]
			&=\int_{\Omega} \pl_t f^0 h'(\tilde{ \rho}) q_t + \tilde{ \rho} \pl_t f \cdot u_t.\nonumber
		\end{align*}
		Note that differentiation of the system (\ref{equation-1}) with respect to $t$ will keep the boundary conditions that we have $\pl_t u \cdot n|_{\pl \Omega}=0$, $\mbox{curl} u_t \times n|_{\pl \Omega}= (2S(n)- \alpha I)u_t$, and $\nabla \phi_t \cdot n|_{\pl \Omega}=0$. Therefore, (\ref{result5}) can be proved in the same way as in Lemma \ref{lem-basic}, we then omit the proof here.
	\end{proof}

	\subsection{Estimates for the second-order derivatives}
	Firstly, we shall derive the estimates of $\nabla^2u$. By Lemma \ref{lem-div-curl} and Lemma \ref{lem-ell}, we have
	\begin{equation*}
		\| \nabla^2 u \|_{L^2} \leq C(\|\nabla \mbox{div} u\|_{L^2} + \|\mbox{curl}^2 u\|_{L^2} + \|\nabla u \|_{L^2}),
	\end{equation*}
	which is also  proved in \eqref{2u}.
	It thus remains to estimate $\nabla\mbox{div}u$ and $\mbox{curl}^2u$.

	\begin{lem}\label{nabladiv}
		Suppose that the conditions in Proposition \ref{prop} hold, there are positive constants $c$ and $C$ independent of $t$, such that
		\begin{align}\label{result8}
			 &\frac{2\mu+\lambda}{2}\frac{d}{dt}\int_{\Omega}\tilde{\rho}^{-1}|\nabla\mbox{div}u|^2+\frac{d}{dt}\int_{\Omega}\mbox{div}u_{t}\mbox{div}u+c\|\nabla q_t\|_{L^2}^2 \nonumber\\[2mm]
			&\leq C\left(\|q_t\|_{L^2}^2
			+\|\nabla u\|_{H^1}^2+\|\nabla u_t\|_{L^2}^2\right)
			+C\delta\mathcal{D}^2(t).
		\end{align}
	\end{lem}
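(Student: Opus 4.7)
The plan is to test the time-differentiated momentum equation $\partial_t(\ref{equation-1})_2$ against $\nabla \mbox{div} u$ in $L^2(\Omega)$. The two $\frac{d}{dt}$-terms on the left of (\ref{result8}) will fall out automatically: the viscous piece $-(2\mu+\lambda)\tilde{\rho}^{-1}\nabla \mbox{div} u_t$ (extracted by writing $\Delta u_t = \nabla\mbox{div}u_t - \mbox{curl}^2 u_t$) produces $-\tfrac{2\mu+\lambda}{2}\tfrac{d}{dt}\int\tilde{\rho}^{-1}|\nabla\mbox{div}u|^2$ after using that $\tilde{\rho}$ is time-independent, while the inertial piece $u_{tt}$, integrated by parts in space using $u_{tt}\cdot n|_{\partial\Omega}=0$ and then in time, yields $-\tfrac{d}{dt}\int\mbox{div}u_t\,\mbox{div}u + \|\mbox{div}u_t\|_{L^2}^2$. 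Multiplying the resulting identity by $-1$ places both time-derivative terms on the left with the correct sign.

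The key step for producing the coercivity $c\|\nabla q_t\|_{L^2}^2$ is to exploit the continuity equation rather than integrating $\int\nabla(h'(\tilde{\rho})q_t)\cdot\nabla\mbox{div}u$ by parts. Differentiating $(\ref{equation-1})_1$ in space gives
\begin{align*}
\tilde{\rho}\,\nabla\mbox{div}u = -\nabla q_t - (\mbox{div}u)\nabla\tilde{\rho} - \nabla(u\cdot\nabla\tilde{\rho}) + \nabla f^0,
\end{align*}
so that $\nabla\mbox{div}u = -\tilde{\rho}^{-1}\nabla q_t + R$ with $\|R\|_{L^2}\leq C(\|\nabla u\|_{L^2}+\|\nabla f^0\|_{L^2})$; under the smallness assumption (\ref{small}), the explicit form $f^0=-\mbox{div}(qu)$ together with Sobolev embedding gives $\|\nabla f^0\|_{L^2}^2\leq C\delta^2\mathcal{D}^2(t)$. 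Substituting into the coupling term and expanding $\nabla(h'(\tilde{\rho})q_t) = h'(\tilde{\rho})\nabla q_t + h''(\tilde{\rho})q_t\nabla\tilde{\rho}$, the leading contribution is $-\int h'(\tilde{\rho})\tilde{\rho}^{-1}|\nabla q_t|^2$, which, by the uniform lower bound $h'(\tilde{\rho})\tilde{\rho}^{-1}\geq c>0$ from (\ref{hb}) and the boundedness of $\tilde{\rho}$ proved in Lemma \ref{lem-rho}, delivers the coercive term $+c\|\nabla q_t\|_{L^2}^2$ on the left after moving signs; the cross and remainder terms are controlled by Young's inequality as $\epsilon\|\nabla q_t\|^2 + C(\|q_t\|_{L^2}^2 + \|\nabla u\|_{L^2}^2 + \delta\mathcal{D}^2(t))$.

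The remaining right-hand side terms are standard. The curl-gradient error $\mu\int\tilde{\rho}^{-1}\mbox{curl}^2 u_t\cdot\nabla\mbox{div}u$ is handled by one integration by parts: the bulk becomes $\int\mbox{curl}u_t\cdot(\nabla\tilde{\rho}^{-1}\times\nabla\mbox{div}u)$ bounded by $\epsilon\|\nabla\mbox{div}u\|^2 + C\|\nabla u_t\|^2$, while the boundary contribution, after substituting the time-differentiated Navier-slip identity $\mbox{curl}u_t\times n = (2S(n)-\alpha I)u_t$ (a tangential field since $u_t\cdot n|_{\partial\Omega}=0$), is estimated by Proposition \ref{lem-boundary-key} applied with $v=\tilde{\rho}^{-1}(2S(n)-\alpha I)u_t$ and $f=\mbox{div}u$. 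The electric term $-\int\nabla\phi_t\cdot\nabla\mbox{div}u$ is treated by one integration by parts using $\partial_n\phi_t|_{\partial\Omega}=0$ together with $\Delta\phi_t=q_t$, giving $\int q_t\,\mbox{div}u\leq C(\|q_t\|^2+\|\nabla u\|^2)$. The nonlinear term $\int f_t\cdot\nabla\mbox{div}u$ is controlled by Young's inequality and the quadratic-in-perturbation structure of $f$ from (\ref{eq-f}), producing $\epsilon\|\nabla\mbox{div}u\|^2 + C\delta\mathcal{D}^2(t)$. Since $\|\nabla\mbox{div}u\|_{L^2}^2\leq\|\nabla u\|_{H^1}^2$ is already permitted on the right of (\ref{result8}), every residual $\epsilon\|\nabla\mbox{div}u\|^2$ is absorbed into that term, and the $\epsilon\|\nabla q_t\|^2$ is absorbed into the coercive left-hand term by choosing $\epsilon$ small.

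The principal obstacle is the second step: recognizing that one must \emph{not} integrate $\int\nabla(h'(\tilde{\rho})q_t)\cdot\nabla\mbox{div}u$ by parts (which would generate $\Delta\mbox{div}u$ together with an uncontrolled Neumann boundary trace $\partial_n\mbox{div}u$) and must instead substitute the continuity equation into $\nabla\mbox{div}u$ to convert the duality pairing into the desired $\|\nabla q_t\|_{L^2}^2$ norm. The remaining delicate point is verifying that the remainder $R$ produced by this substitution is genuinely of lower order, in particular that $\|\nabla f^0\|_{L^2}=O(\delta\mathcal{D}(t))$, which requires distributing the derivatives in $f^0=-\mbox{div}(qu)$ carefully using the Sobolev inequalities of Lemma \ref{sobolev inequ}.
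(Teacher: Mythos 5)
Your proposal is correct and follows essentially the same route as the paper: the paper forms the combination $\int_{\Omega}\tilde{\rho}^{-1}\nabla(\ref{equation-1})_1\cdot\nabla(h'(\tilde{\rho})q_t)-\partial_t(\ref{equation-1})_2\cdot\nabla\mathrm{div}\,u$, which is algebraically identical to your step of testing $\partial_t(\ref{equation-1})_2$ against $\nabla\mathrm{div}\,u$ and then substituting the gradient of the continuity equation into the coupling term to extract $c\|\nabla q_t\|_{L^2}^2$. The treatment of the two time-derivative terms, the curl boundary term via Proposition \ref{lem-boundary-key}, the electric term via $\Delta\phi_t=q_t$, and the nonlinear remainders all match the paper's argument.
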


	\begin{proof}
		Computing the following integral
		\begin{equation*}
			\int_{\Omega}\tilde{\rho}^{-1}\nabla(\ref{equation-1})_1\cdot \nabla(h'(\tilde{\rho})q_t)-\partial_{t}(\ref{equation-1})_2\cdot\nabla\mbox{div}u,
		\end{equation*}
		we have
		\begin{align}\label{step8}
			&\int_{\Omega}\tilde{\rho}^{-1}\nabla q_t \cdot \nabla(h'(\tilde{\rho})q_t)+\int_{\Omega}\tilde{\rho}^{-1}\nabla\mbox{div}(\rho u)\cdot \nabla(h'(\tilde{\rho})q_t)
			-\int_{\Omega}\nabla(h'(\tilde{\rho})q_t)\cdot\nabla\mbox{div}u \nonumber\\[2mm]
			&-\int_{\Omega}u_{tt}\cdot\nabla\mbox{div}u
			+(2\mu+\lambda)\int_{\Omega}\tilde{\rho}^{-1}\nabla\mbox{div}u_t\cdot\nabla\mbox{div}u
			\nonumber\\[2mm] &=\mu\int_{\Omega}\tilde{\rho}^{-1}\mbox{curl}^2u_t\cdot\nabla\mbox{div}u-\int_{\Omega}\nabla\phi_t\cdot\nabla\mbox{div}u+\int_{\Omega}\tilde{\rho}^{-1}\nabla f^0\cdot\nabla(h'(\tilde{\rho})q_t)-\int_{\Omega}f_{t}\cdot\nabla\mbox{div}u.
		\end{align}
		We can easily check that
		\begin{align*}
			\int_{\Omega}\tilde{\rho}^{-1}\nabla\mbox{div}u_t\cdot\nabla\mbox{div}u
			=\frac{1}{2}\frac{d}{dt}\int_{\Omega}\tilde{\rho}^{-1}|\nabla\mbox{div}u|^2.
		\end{align*}
		Now, we estimate other terms in \eqref{step8} one by one. Firstly,
		\begin{align*}
			&\int_{\Omega}\tilde{\rho}^{-1}\nabla q_t \cdot \nabla(h'(\tilde{\rho})q_t)=\int_{\Omega}\tilde{\rho}^{-1}h'(\tilde{\rho})|\nabla q_t|^2 +\int_{\Omega}\tilde{\rho}^{-1}h''(\tilde{\rho})\nabla\tilde{\rho}\cdot\nabla q_t q_t\nonumber\\[2mm]
			&\geq \int_{\Omega}\tilde{\rho}^{-1}h'(\tilde{\rho})|\nabla q_t|^2 -C \|\nabla\tilde{\rho}\|_{L^{\infty}}\|\nabla q_t\|_{L^2}\| q_t\|_{L^2} \nonumber\\[2mm]
			&\geq c\|\nabla q_t\|_{L^2}^2-C\|q_t\|_{L^2}^2,
		\end{align*}
		where we used \eqref{hb} and  the boundedness of  $h''$, $\tilde{\rho}$ and $\nabla \tilde{\rho}$.

		For the second and third integrals on the left-hand side of \eqref{step8}, adding them together we can obtain
		\begin{align*}
			&\int_{\Omega}\tilde{\rho}^{-1}\nabla\mbox{div}(\tilde{\rho} u)\cdot \nabla(h'(\tilde{\rho})q_t)
			-\int_{\Omega}\nabla(h'(\tilde{\rho})q_t)\cdot\nabla\mbox{div}u \nonumber\\[2mm]
			&=\int_{\Omega}\tilde{\rho}^{-1}\left(\mbox{div}u\nabla \tilde{\rho} +\nabla (u\cdot\nabla\rho) \right)\cdot\nabla(h'(\tilde{\rho})q_t)\nonumber\\[2mm]
			&\geq -C\int_{\Omega}|\nabla\tilde{\rho}||\nabla u||\nabla(h'(\tilde{\rho})q_t)|-C\int_{\Omega}|\nabla^2\tilde{\rho}||u||\nabla(h'(\tilde{\rho})q_t)|\nonumber\\[2mm]
			&\geq -C\|\nabla\tilde{\rho}\|_{H^2}\|\nabla u\|_{L^2}\|\nabla(h'(\tilde{\rho})q_t)\|_{L^2}\nonumber\\[2mm]
			&\geq -\epsilon \|\nabla(h'(\tilde{\rho})q_t)\|^2_{L^2}-C\|\nabla u\|^2_{L^2}.
		\end{align*}

		Integrating by parts for both $t$- and $x$-variables with the boundary condition $u_{tt}\cdot n|_{\partial\Omega}=0$, the fourth term on the left-hand side of (\ref{step8}) becomes
		\begin{align*}
			-\int_{\Omega}u_{tt}\cdot\nabla\mbox{div}u=\int_{\Omega}\mbox{div}u_{tt}\mbox{div}u
			=\frac{d}{dt}\int_{\Omega}\mbox{div}u_{t}\mbox{div}u-\int_{\Omega}|\mbox{div}u_{t}|^2.
		\end{align*}

		For the first term on the right-hand side of \eqref{step8},
		integrating by parts implies that
		\begin{align*}
			\int_{\Omega}\tilde{\rho}^{-1}\mbox{curl}^2u_t\cdot\nabla\mbox{div}u
			&=\int_{\Omega}\mbox{curl}(\tilde{\rho}^{-1}\nabla\mbox{div}u)\cdot  \mbox{curl} u_t -\int_{\partial\Omega}\tilde{\rho}^{-1}\mbox{curl}u_t\times n\cdot\nabla\mbox{div}u\nonumber\\[2mm]
			&=\int_{\Omega} \nabla \tilde{ \rho}^{-1} \times \mbox{curl} u_t \cdot \nabla \mbox{div} u -\int_{\partial\Omega}\tilde{\rho}^{-1}\mbox{curl}u_t\times n\cdot\nabla\mbox{div}u\nonumber\\[2mm]
			&\leq C\left(\|\nabla u_t\|_{L^2}^2+ \|\nabla\mbox{div}u\|_{L^2}^2\right),
		\end{align*}
		where we used  Proposition \ref{lem-boundary-key} with choosing $v=(2S(n)-\alpha I)u_t$ and $f=\mbox{div}u$ to deal with the boundary term.

		The  second term on the right-hand side of \eqref{step8} can be controlled by integrating by parts and considering $\Delta \phi_t = q_t$
		\begin{align*}
			\int_{\Omega} \nabla\phi_t\cdot \nabla\mbox{div}u
			&=-\int_{\Omega}\mbox{div}(\nabla\phi_t)\mbox{div}u
			=-\int_{\Omega}q_t\mbox{div}u\nonumber\\[2mm]
			&\leq\|q_t\|_{L^2}^2+\|\mbox{div}u\|_{L^2}^2.
		\end{align*}
		The nonlinear terms in \eqref{step8}  can be controlled as,
		\begin{align*}
			\int_{\Omega}\tilde{\rho}^{-1}\nabla f^0\cdot\nabla(h'(\tilde{\rho})q_t)&\leq  \epsilon \|\nabla(h'(\tilde{\rho})q_t)\|_{L^2}^2+C\|\nabla f^{0}\|_{L^2}^2\nonumber\\[2mm]
			&\leq  \epsilon \|\nabla q_t\|_{L^2}^2+C\|q_t\|_{L^2}^2+C\delta\left(\|\nabla^2 u\|_{L^2}^2+\|\nabla^2 q\|_{L^2}^2\right),
		\end{align*}
		and
		\begin{align*}
			\int_{\Omega}f_{t}\cdot\nabla\mbox{div}u\leq C\delta \left(\|\nabla  u_t\|_{L^2}^2+\|\nabla^2 u\|_{L^2}^2+\|\nabla q_t\|_{L^2}^2+\|\nabla^2 u_t\|_{L^2}^2
			\right).
		\end{align*}
		Combining the above estimates into (\ref{step8}) and  choosing $\epsilon$ small, we get (\ref{result8}).
	\end{proof}

	\begin{lem}
		Under the assumptions in proposition \ref{prop} and for any $\varepsilon>0$, it holds
		\begin{align}\label{result9}
			\frac{1}{2}\frac{d}{dt}\int_{\Omega}|\mbox{curl}^2u|^2+c\|\mbox{curl}^3u\|^2_{L^2}\leq \varepsilon\|\nabla\mbox{curl}^2u\|_{L^2}^2+C\left(\|\nabla u_{t}\|_{L^2}^2+\|\nabla^2u\|_{L^2}^2\right)
			+C\delta\mathcal{D}^2(t),
		\end{align}
		where the positive constant $c$ depends on $\mu$ and $\lambda$ and the positive constant $C$ doesn't depend on $t$.
	\end{lem}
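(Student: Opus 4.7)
My plan is to apply the operator $\mbox{curl}$ to the momentum equation \eqref{equation-u} and then test the resulting equation against $\mbox{curl}^{3}u$ in $L^{2}(\Omega)$. Since $\mbox{curl}\nabla=0$, both the pressure gradient $\nabla(h'(\tilde\rho)q)$ and the electric force $\nabla\phi$ are annihilated, and $\mbox{curl}\nabla\mbox{div}u=0$; only the commutators of $\mbox{curl}$ with the variable coefficient $\tilde\rho^{-1}$ survive. Explicitly, applying $\mbox{curl}$ to \eqref{equation-u} gives
\begin{equation*}
\mbox{curl}u_{t}+\mu\tilde\rho^{-1}\mbox{curl}^{3}u+\mu\nabla\tilde\rho^{-1}\times\mbox{curl}^{2}u-(2\mu+\lambda)\nabla\tilde\rho^{-1}\times\nabla\mbox{div}u=\mbox{curl}f,
\end{equation*}
and taking the $L^2$ inner product with $\mbox{curl}^{3}u$ yields the principal dissipative contribution $\mu\int_\Omega\tilde\rho^{-1}|\mbox{curl}^{3}u|^{2}\geq c\|\mbox{curl}^{3}u\|_{L^2}^{2}$ thanks to the pointwise bounds on $\tilde\rho$ from Lemma \ref{lem-rho}.

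For the temporal term I would integrate by parts in $x$ via the identity $\int_\Omega v\cdot\mbox{curl}w = \int_\Omega\mbox{curl}v\cdot w + \int_{\pl\Omega}v\cdot(w\times n)$ with $v=\mbox{curl}u_t$ and $w=\mbox{curl}^{2}u$; together with the time-differentiated slip condition $\mbox{curl}u_t\times n=(2S(n)-\alpha I)u_t$, this gives
\begin{equation*}
\int_{\Omega}\mbox{curl}u_{t}\cdot\mbox{curl}^{3}u = \frac{1}{2}\frac{d}{dt}\|\mbox{curl}^{2}u\|_{L^2}^{2} + \int_{\pl\Omega}\mbox{curl}^{2}u\cdot(2S(n)-\alpha I)u_{t}.
\end{equation*}
The two remaining variable-coefficient left-hand-side terms are handled by Cauchy--Schwarz, giving $\varepsilon\|\mbox{curl}^{3}u\|_{L^2}^{2}+C(\|\mbox{curl}^{2}u\|_{L^2}^{2}+\|\nabla\mbox{div}u\|_{L^2}^{2})$, which feeds into the $\|\nabla^{2}u\|_{L^2}^{2}$ term on the right-hand side via \eqref{2u}. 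For the nonlinear source, I would use the explicit form of $f$ in \eqref{eq-f} together with the a priori smallness \eqref{small} and the Sobolev embeddings of Lemma \ref{sobolev inequ} to bound $\|\mbox{curl}f\|_{L^{2}}^{2}$ by $C\delta\,\mathcal{D}^{2}(t)$, so that $\int\mbox{curl}f\cdot\mbox{curl}^{3}u\leq \varepsilon\|\mbox{curl}^{3}u\|_{L^2}^{2}+C\delta\mathcal{D}^{2}(t)$.

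The main obstacle is the boundary integral $\int_{\pl\Omega}\mbox{curl}^{2}u\cdot(2S(n)-\alpha I)u_{t}$, because $\mbox{curl}^{2}u|_{\pl\Omega}$ already carries two derivatives of $u$, so Proposition \ref{prop-trace} alone is insufficient. The fix is to apply the standard trace inequality $|w|_{L^{2}(\pl\Omega)}^{2}\leq C(\|w\|_{L^{2}}\|\nabla w\|_{L^{2}}+\|w\|_{L^{2}}^{2})$ with $w=\mbox{curl}^{2}u$, and Lemma \ref{lem-ell} applied to $u_t$ to get $|u_{t}|_{L^{2}(\pl\Omega)}\leq C\|\nabla u_{t}\|_{L^{2}}$. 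A Young's inequality then dominates the boundary term by $\varepsilon\|\nabla\mbox{curl}^{2}u\|_{L^{2}}^{2}+C\|\nabla u_{t}\|_{L^{2}}^{2}+C\|\mbox{curl}^{2}u\|_{L^{2}}^{2}$. Choosing $\varepsilon$ small to absorb the $\varepsilon\|\mbox{curl}^{3}u\|_{L^{2}}^{2}$ pieces into the principal dissipation, while keeping the $\varepsilon\|\nabla\mbox{curl}^{2}u\|_{L^{2}}^{2}$ term unabsorbed (it will be closed off later at the level of the $\|\nabla^{3}u\|_{L^{2}}$ estimate), delivers the desired inequality \eqref{result9}.
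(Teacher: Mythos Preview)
Your proof is correct and follows essentially the same route as the paper: take $\mbox{curl}$ of \eqref{equation-u}, test against $\mbox{curl}^{3}u$, integrate by parts in the temporal term using the slip condition $\mbox{curl}u_t\times n=(2S(n)-\alpha I)u_t$, and control the commutator and nonlinear pieces by Cauchy's inequality. The only minor difference is your treatment of the boundary integral $\int_{\partial\Omega}\mbox{curl}^{2}u\cdot(2S(n)-\alpha I)u_{t}$: you invoke the trace interpolation $|w|_{L^{2}(\partial\Omega)}^{2}\leq C(\|w\|_{L^{2}}\|\nabla w\|_{L^{2}}+\|w\|_{L^{2}}^{2})$, whereas the paper applies \eqref{est-boundary} directly to obtain $|\mbox{curl}^{2}u|_{L^{2}(\partial\Omega)}\leq C\|\nabla\mbox{curl}^{2}u\|_{L^{2}}$; both lead to the same bound $\varepsilon\|\nabla\mbox{curl}^{2}u\|_{L^{2}}^{2}+C\|\nabla u_{t}\|_{L^{2}}^{2}$ (up to a harmless extra $\|\nabla^{2}u\|_{L^{2}}^{2}$ in your version).
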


	\begin{proof}
		Recalling $(\ref{equation-1})_2$
		\begin{equation*}
			u_{t}+\nabla (h'(\tilde{\rho})q)-\tilde{\rho}^{-1}\left(\mu\Delta u+(\mu+\lambda)\nabla \mbox{div}u\right)-\nabla\phi=f,
		\end{equation*}
		taking $\mbox{curl}$ of it will give
		\begin{equation}\label{equ-w}
			w_{t}-\nabla \tilde{\rho}^{-1} \times \left(\mu\Delta u+(\mu+\lambda)\nabla \mbox{div}u\right) + \tilde{ \rho}^{-1} \mu \mbox{curl}^2 w= \mbox{curl} f,
		\end{equation}
		where we define $w=\mbox{curl}u$.

		Taking the inner product of the equation (\ref{equ-w}) with $\mbox{curl}^2w$, and integrating with respect to $x$, we obtain
		\begin{align}\label{step9}
			\int_{\Omega}w_{t}\cdot \mbox{curl}^2w+\mu\int_{\Omega}\tilde{\rho}^{-1}|\mbox{curl}^2w|^2
			=\int_{\Omega}\nabla\tilde{\rho}^{-1}\times (\mu\Delta u+(\mu+\lambda)\nabla\mbox{div}u)\cdot \mbox{curl}^2w+\int_{\Omega}\mbox{curl} f\cdot\mbox{curl}^2w.
		\end{align}

		Integrating by parts with the boundary condition $(w_t\times n)|_{\partial\Omega}=(2S(n)-\alpha I)u_t$, we have
		\begin{align*}
			&\int_{\Omega}w_{t}\cdot \mbox{curl}^2w=\int_{\Omega}\mbox{curl}w_{t}\cdot \mbox{curl}w
			+\int_{\partial\Omega}w_{t}\times n\cdot \mbox{curl}w\nonumber\\[2mm]
			&=\frac{1}{2}\frac{d}{dt}\|\mbox{curl}w\|_{L^2}^2+\int_{\partial\Omega}(2S(n)-\alpha I)u_t\cdot \mbox{curl}w\nonumber\\[2mm]
			&\geq \frac{1}{2}\frac{d}{dt}\|\mbox{curl}w\|_{L^2}^2-C\|\nabla u_{t}\|_{L^2} \|\nabla\mbox{curl}w\|_{L^2}\nonumber\\[2mm]
			&\geq \frac{1}{2}\frac{d}{dt}\|\mbox{curl}w\|_{L^2}^2-\varepsilon\|\nabla\mbox{curl}^2u\|_{L^2}^2-C\|\nabla u_{t}\|_{L^2}^2,
		\end{align*}
		where we used the classical Sobolev trace theorem.
		
		Applying the Cauchy's inequality for the terms on the right-hand side directly, we have
		\begin{align*}
			&\int_{\Omega}\nabla\tilde{\rho}^{-1}\times (\mu\Delta u+(\mu+\lambda)\nabla\mbox{div}u)\cdot \mbox{curl}^2w\nonumber\\[2mm]
			&\leq C\|\nabla\tilde{\rho}\|_{L^{\infty}}\|\nabla^2u\|_{L^2}\|\mbox{curl}^2w\|_{L^2}\nonumber\\[2mm]
			&\leq \epsilon \|\mbox{curl}^2w\|^2_{L^2}+C\|\nabla^2u\|^2_{L^2},
		\end{align*}
		and
		\begin{align*}
			&\int_{\Omega}\mbox{curl} f\cdot\mbox{curl}^2w\leq  \epsilon \|\mbox{curl}^2w\|^2_{L^2}+C\|\nabla f\|_{L^2}^2\nonumber\\[2mm]
			&\leq \epsilon \|\mbox{curl}^2w\|^2_{L^2}+C\delta\left(\|\nabla^2 u\|_{L^2}^2+\|\nabla^3 u\|_{L^2}^2+\|\nabla^2 q\|_{L^2}^2\right).
		\end{align*}
		We then finished the proof of (\ref{result9}) after putting the above estimates into (\ref{step9}) and choosing $\epsilon$ small.
	\end{proof}

	Next, we prove estimates of the second-order derivatives $(\nabla u_t, \nabla q_t)$.
	
	\begin{lem}
		Suppose that the conditions in Proposition \ref{prop} hold, there are positive constants $c$ depending on $\mu$ and $\lambda$ and $C$ independent of $t$, such that
		\begin{align}\label{result6}
			&\frac{1}{2}\frac{d}{dt}\int_{\Omega}\left(|\mbox{div}u_t|^2+\tilde{\rho}^{-1}h'(\tilde{\rho})|\nabla q_t|^2\right)
			+\frac{d}{dt}\int_{\Omega}\tilde{\rho}^{-1}h''(\tilde{\rho})\nabla q_{t}\cdot \nabla \tilde{\rho} q_t
			+c\|\nabla \mbox{div}u_t\|_{L^2}^2 \nonumber \\[2mm]
			&\leq C\left(\|q_t\|_{H^1}^2+\|q_{tt}\|_{L^2}^2+\|\nabla u_t\|_{L^2}^2\right)+C\delta\mathcal{D}^2(t).
		\end{align}
	\end{lem}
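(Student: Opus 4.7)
The proof will parallel Lemma \ref{nabladiv}, but now applied one time-derivative higher. The plan is to compute the combined integral
\begin{equation*}
\int_{\Omega}\tilde\rho^{-1}\nabla\partial_t(\ref{equation-1})_1\cdot\nabla(h'(\tilde\rho)q_t)
\;-\;\partial_t(\ref{equation-1})_2\cdot\nabla\mbox{div}u_t,
\end{equation*}
then identify the coercive and time-derivative structure on the left, and absorb the rest. Note that $\partial_t(\ref{equation-1})_1$ reads $q_{tt}+\mbox{div}(\tilde\rho u_t)=\partial_t f^0$ and $\partial_t(\ref{equation-1})_2$ has the standard form with $(u,q,\phi)$ replaced by $(u_t,q_t,\phi_t)$. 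Because $t$-differentiation preserves the boundary conditions ($u_{tt}\cdot n=0$, $\mbox{curl}u_t\times n=(2S(n)-\alpha I)u_t$, $\nabla\phi_t\cdot n=0$), all integration-by-parts arguments from Lemma \ref{nabladiv} transfer verbatim.

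The first key computation is to expand
\begin{equation*}
\int_{\Omega}\tilde\rho^{-1}\nabla q_{tt}\cdot\nabla(h'(\tilde\rho)q_t)
=\int_{\Omega}\tilde\rho^{-1}h'(\tilde\rho)\nabla q_{tt}\cdot\nabla q_t
+\int_{\Omega}\tilde\rho^{-1}h''(\tilde\rho)q_t\nabla\tilde\rho\cdot\nabla q_{tt},
\end{equation*}
and recognize each piece as a total time derivative: the first gives $\tfrac12\frac{d}{dt}\int\tilde\rho^{-1}h'(\tilde\rho)|\nabla q_t|^2$, while for the second we use $\partial_t(\tilde\rho^{-1}h''(\tilde\rho)\nabla\tilde\rho\, q_t)=\tilde\rho^{-1}h''(\tilde\rho)\nabla\tilde\rho\, q_{tt}$ to produce $\frac{d}{dt}\int\tilde\rho^{-1}h''(\tilde\rho)\nabla q_t\cdot\nabla\tilde\rho\, q_t$ plus a leftover $\int\tilde\rho^{-1}h''(\tilde\rho)q_{tt}\nabla\tilde\rho\cdot\nabla q_t$ which is bounded by $C(\|q_{tt}\|_{L^2}^2+\|\nabla q_t\|_{L^2}^2)$ using \eqref{hb} and $\|\nabla\tilde\rho\|_{L^\infty}<\infty$ from Lemma \ref{lem-rho}. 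Simultaneously, integrating by parts using $u_{tt}\cdot n|_{\pl\Omega}=0$,
\begin{equation*}
-\int_{\Omega}u_{tt}\cdot\nabla\mbox{div}u_t=\tfrac12\frac{d}{dt}\int_{\Omega}|\mbox{div}u_t|^2,
\end{equation*}
and from $\int\tilde\rho^{-1}(\mu\Delta u_t+(\mu+\lambda)\nabla\mbox{div}u_t)\cdot\nabla\mbox{div}u_t$ we obtain the dissipation $c\|\nabla\mbox{div}u_t\|_{L^2}^2$ after using $\Delta u_t=-\mbox{curl}^2u_t+\nabla\mbox{div}u_t$.

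The main new technicality, exactly as in Lemma \ref{nabladiv}, is the cross term $\int\tilde\rho^{-1}\mbox{curl}^2u_t\cdot\nabla\mbox{div}u_t$. I would integrate by parts and split it as $\int\mbox{curl}(\tilde\rho^{-1}\nabla\mbox{div}u_t)\cdot\mbox{curl}u_t-\int_{\pl\Omega}\tilde\rho^{-1}(\mbox{curl}u_t\times n)\cdot\nabla\mbox{div}u_t$; the interior part is bounded by $\varepsilon\|\nabla\mbox{div}u_t\|_{L^2}^2+C\|\nabla u_t\|_{L^2}^2$ using $\|\nabla\tilde\rho^{-1}\|_{L^\infty}<\infty$, while on the boundary I would substitute $\mbox{curl}u_t\times n=(2S(n)-\alpha I)u_t$ and apply Proposition \ref{lem-boundary-key} with $v=(2S(n)-\alpha I)u_t$ and $f=\mbox{div}u_t$ to bound it by $C\|\nabla u_t\|_{L^2}\|\nabla\mbox{div}u_t\|_{L^2}$. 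The cross terms between $\nabla(h'(\tilde\rho)q_t)\cdot\nabla\mbox{div}u_t$ (one arising from the momentum equation, the other from $\tilde\rho^{-1}\nabla\mbox{div}(\tilde\rho u_t)\cdot\nabla(h'(\tilde\rho)q_t)$) cancel at leading order, with the lower-order remainder controlled by $\|\nabla\tilde\rho\|_{H^2}\|\nabla u_t\|_{L^2}\|\nabla(h'(\tilde\rho)q_t)\|_{L^2}$. The Poisson contribution $\int\nabla\phi_t\cdot\nabla\mbox{div}u_t=-\int q_t\mbox{div}u_t$ is absorbed into $\|q_t\|_{L^2}^2+\|\mbox{div}u_t\|_{L^2}^2$ via $\Delta\phi_t=q_t$ and $\nabla\phi_t\cdot n|_{\pl\Omega}=0$.

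The hardest step will be controlling the nonlinear terms $\int\tilde\rho^{-1}\nabla\partial_tf^0\cdot\nabla(h'(\tilde\rho)q_t)$ and $\int f_t\cdot\nabla\mbox{div}u_t$: after differentiating \eqref{eq-f} in $t$, $f_t$ contains second-order spatial derivatives coupled to $q_t,u_t$, so the estimate requires $L^3$--$L^6$--$L^2$ Hölder splittings together with Lemma \ref{sobolev inequ} to push all overhead into the smallness factor $\delta$ from \eqref{small}, ending with a bound $C\delta(\|\nabla q_t\|_{L^2}^2+\|\nabla^2u\|_{L^2}^2+\|\nabla^2u_t\|_{L^2}^2+\|\nabla^2q\|_{L^2}^2)\leq C\delta\mathcal{D}^2(t)$. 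Combining all contributions and choosing $\varepsilon$ small enough to absorb $\varepsilon\|\nabla\mbox{div}u_t\|_{L^2}^2$ and $\varepsilon\|\nabla(h'(\tilde\rho)q_t)\|_{L^2}^2$ into the coercive $c\|\nabla\mbox{div}u_t\|_{L^2}^2$ and $\|\nabla q_t\|_{L^2}^2$ terms (the latter controls $\|\nabla(h'(\tilde\rho)q_t)\|_{L^2}^2$ modulo $\|q_t\|_{L^2}^2$ by \eqref{hb}) yields \eqref{result6}.
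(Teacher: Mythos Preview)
Your overall scheme is correct and matches the paper's proof almost step for step: the test function pairing, the time-derivative identification for $\int\tilde\rho^{-1}\nabla q_{tt}\cdot\nabla(h'(\tilde\rho)q_t)$, the handling of $u_{tt}$, the curl--divergence cross term via Proposition~\ref{lem-boundary-key}, the Poisson term, and the leading-order cancellation in the $\nabla(h'(\tilde\rho)q_t)$ pairings are all exactly what the paper does.

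There is, however, one genuine gap in your treatment of the nonlinear term $\int_\Omega\tilde\rho^{-1}\nabla\partial_t f^0\cdot\nabla(h'(\tilde\rho)q_t)$. You lump this together with $\int f_t\cdot\nabla\mbox{div}u_t$ and claim both can be closed by $L^3$--$L^6$--$L^2$ H\"older splittings, landing in $C\delta\mathcal D^2(t)$. But expanding $\nabla\partial_t f^0=-\nabla\mbox{div}(q_t u+qu_t)$ produces, among other terms, $u\cdot\nabla^2 q_t$. A direct H\"older estimate on $\int\tilde\rho^{-1}(u\cdot\nabla^2 q_t)\cdot\nabla(h'(\tilde\rho)q_t)$ would force $\|\nabla^2 q_t\|_{L^2}$, which is \emph{not} contained in $\mathcal D(t)$ (only $\|q_t\|_{H^1}$ is). The paper singles this term out and integrates by parts in space, using $u\cdot n|_{\partial\Omega}=0$:
\[
\int_\Omega\tilde\rho^{-1}h'(\tilde\rho)\,u\cdot\nabla^2 q_t\cdot\nabla q_t
=-\tfrac12\int_\Omega\mbox{div}\bigl(\tilde\rho^{-1}h'(\tilde\rho)u\bigr)|\nabla q_t|^2,
\]
and similarly for the piece paired with $h''(\tilde\rho)\nabla\tilde\rho\,q_t$. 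This converts the dangerous $\nabla^2 q_t$ into $|\nabla q_t|^2$ with a small coefficient $C\delta$, which is in $\mathcal D^2(t)$. Without this observation your bound does not close; you should add this integration-by-parts step explicitly. (A minor related point: there is no dissipative $\|\nabla q_t\|_{L^2}^2$ on the left of \eqref{result6}, so $\varepsilon\|\nabla(h'(\tilde\rho)q_t)\|_{L^2}^2$ is not ``absorbed'' there---it simply goes into the $C\|q_t\|_{H^1}^2$ term on the right, with no smallness needed.)
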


	\begin{proof}
		We consider the following integral
		\begin{equation*}
			\int_{\Omega}\partial_{t}\nabla(\ref{equation-1})_1\cdot \tilde{\rho}^{-1}\nabla(h'(\tilde{\rho})q_t)-\partial_{t}(\ref{equation-u})\cdot \nabla\mbox{div}u_t
		\end{equation*}
		to get
		\begin{align}\label{step6}
			&-\int_{\Omega} u_{tt}\cdot \nabla\mbox{div}u_t+(2\mu+\lambda)\int_{\Omega}\tilde{\rho}^{-1}|\nabla\mbox{div}u_t|^2+\int_{\Omega}\tilde{\rho}^{-1}\nabla q_{tt}\cdot \nabla(h'(\tilde{\rho})q_t)\nonumber\\[2mm]
			&+\int_{\Omega}\left(\tilde{\rho}^{-1}\nabla\mbox{div}(\tilde{\rho} u_t)\cdot \nabla(h'(\tilde{\rho})q_t)- \nabla(h'(\tilde{\rho})q_t)\cdot\nabla \mbox{div}u_t\right)\nonumber\\[2mm]
			&=\mu\int_{\Omega}\tilde{\rho}^{-1}\mbox{curl}^2u_t\cdot\nabla \mbox{div}u_t
			-\int_{\Omega} \nabla\phi_t\cdot \nabla\mbox{div}u_t+\int_{\Omega}\tilde{\rho}^{-1}\nabla f^0_{t}\cdot \nabla(h'(\tilde{\rho})q_t)- f_t\cdot \nabla\mbox{div}u_t.
		\end{align}

		Integrating by parts with $(u_{tt}\cdot n)|_{\partial\Omega}=0$, one obtains
		\begin{align}\label{step5-1}
			-\int_{\Omega} u_{tt}\cdot \nabla\mbox{div}u_tdx=\int_{\Omega}\mbox{div}u_{tt}\mbox{div}u_t=\frac{1}{2}\frac{d}{dt}\int_{\Omega}|\mbox{div}u_t|^2.
		\end{align}
		
		The third term on the left-hand side of (\ref{step6}) by direct calculation
		\begin{align*}
			&\int_{\Omega}\tilde{\rho}^{-1}\nabla q_{tt}\cdot \nabla(h'(\tilde{\rho})q_t)
			=\int_{\Omega}\tilde{\rho}^{-1}h'(\tilde{\rho})\nabla q_{tt}\cdot \nabla q_t+\int_{\Omega}\tilde{\rho}^{-1}h''(\tilde{\rho})\nabla q_{tt}\cdot \nabla \tilde{\rho} q_t\nonumber\\[2mm]
			&=\frac{1}{2}\frac{d}{dt}\int_{\Omega}\tilde{\rho}^{-1}h'(\tilde{\rho})|\nabla q_{t}|^2
			+\frac{d}{dt}\int_{\Omega}\tilde{\rho}^{-1}h''(\tilde{\rho})\nabla q_{t}\cdot \nabla \tilde{\rho} q_t-\int_{\Omega}\tilde{\rho}^{-1}h''(\tilde{\rho})\nabla q_{t}\cdot \nabla \tilde{\rho} q_{tt}\nonumber\\[2mm]
			&\geq \frac{1}{2}\frac{d}{dt}\int_{\Omega}\tilde{\rho}^{-1}h'(\tilde{\rho})|\nabla q_{t}|^2
			+\frac{d}{dt}\int_{\Omega}\tilde{\rho}^{-1}h''(\tilde{\rho})\nabla q_{t}\cdot \nabla \tilde{\rho} q_t
			-C\left(\|\nabla q_{t}\|_{L^2}^2+\|q_{tt}\|_{L^2}^2\right),
		\end{align*}
		where we used the boundedness of $|h''(\tilde{\rho})|$  and $|\nabla\tilde{\rho}|$.
		The fourth term on the left hand side of (\ref{step6}) can be estimate as follows,
		\begin{align}
			&\int_{\Omega}\left(\tilde{\rho}^{-1}\nabla\mbox{div}(\tilde{\rho} u_t)\cdot \nabla(h'(\tilde{\rho})q_t)- \nabla(h'(\tilde{\rho})q_t)\cdot\nabla \mbox{div}u_t\right)\nonumber\\[2mm]
			&=\int_{\Omega}\tilde{\rho}^{-1}\left(\mbox{div}u_t\nabla \tilde{\rho} +\nabla (u_t\cdot\nabla\tilde{\rho}) \right)\cdot\nabla(h'(\tilde{\rho})q_t)\nonumber\\[2mm]
			&\geq -C\int_{\Omega}|\nabla\tilde{\rho}||\nabla u_t||\nabla(h'(\tilde{\rho})q_t)|-C\int_{\Omega}|\nabla^2\tilde{\rho}||u_t||\nabla(h'(\tilde{\rho})q_t)|\nonumber\\[2mm]
			&\geq -C\left(\|\nabla u_t\|^2_{L^2}+\|\nabla(h'(\tilde{\rho})q_t)\|^2_{L^2}\right)\nonumber\\[2mm]
			&\geq -C\left(\|\nabla u_t\|^2_{L^2}+\|q_t\|^2_{H^1}\right).
		\end{align}
		After integrating by parts with the boundary condition $(\mbox{curl}u_t \times n)|_{\partial\Omega}=(S(n)-\alpha I)u_t$, the first term on the right-hand side of (\ref{step6}) becomes
		\begin{align}
			&\int_{\Omega}\tilde{\rho}^{-1}\mbox{curl}^2u_t\cdot\nabla \mbox{div}u_t\nonumber\\[2mm]
			&=\int_{\Omega}\mbox{curl}(\tilde{\rho}^{-1}\nabla \mbox{div}u_t)\cdot \mbox{curl}u_t
			-\int_{\partial\Omega}\tilde{\rho}^{-1}\nabla \mbox{div}u_t\cdot (\mbox{curl}u_t\times n)\nonumber\\[2mm]
			&=\int_{\Omega}\nabla\tilde{\rho}^{-1}\times \nabla\mbox{div}u_t \cdot \mbox{curl}u_t
			-\int_{\partial\Omega}\tilde{\rho}^{-1}(S(n)-\alpha I)u_t\cdot\nabla \mbox{div}u_t\nonumber\\[2mm]
			&\leq \epsilon\|\nabla\mbox{div}u_t\|_{L^2}^2+C\|\nabla u_t\|_{L^2}^2,
		\end{align}
		where we used Proposition  \ref{lem-boundary-key} by choosing $v=(S(n)-\alpha I)u_t$ and $f=\mbox{div}u_t$ to deal with the boundary integration.
		For the second term on the right-hand side of \eqref{step6}, integrating by parts with the boundary condition $(\nabla \phi_t \cdot n )|_{\pl \Omega}=0$ and using  $\Delta \phi_{t}=q_t$, one has
		\begin{align}\label{step5-2}
			\int_{\Omega} \nabla\phi_t\cdot \nabla\mbox{div}u_t
			&=-\int_{\Omega}\mbox{div}(\nabla\phi_t)\mbox{div}u_t
			=-\int_{\Omega}q_t\mbox{div}u_t \leq\|q_t\|^2_{L^2}+\|\mbox{div}u_t\|^2_{L^2}.
		\end{align}
		
		Noted that the integral in the nonlinear term with $f^0_t$ on the right-hand side of (\ref{step6}) contains the term with $\nabla^2q_t$, which is not included in $\mathcal{D}(t)$, thus we shall work on it carefully.
		\begin{align}\label{nonlinear-1}
			&\int_{\Omega}\tilde{\rho}^{-1}\nabla f^0_{t}\cdot \nabla(h'(\tilde{\rho})q_t)=\int_{\Omega}\tilde{\rho}^{-1}\partial_{t}\nabla\mbox{div}(qu)\cdot \nabla(h'(\tilde{\rho})q_t)\nonumber\\[2mm]
			&=\int_{\Omega}\tilde{\rho}^{-1} u\cdot\nabla^2q_t\cdot\nabla (h'(\tilde{\rho})q_t)
			+\int_{\Omega}\tilde{\rho}^{-1}\Big(\mbox{div}u\nabla q_t+q_t\nabla\mbox{div}u\nonumber\\[2mm]
			&+\mbox{div}u_t\nabla q+q \nabla\mbox{div}u_t+\nabla u_t\cdot \nabla q
			+u_t\cdot\nabla^2q+\nabla u\cdot \nabla q_t\Big)\cdot\nabla (h'(\tilde{\rho})q_t),
		\end{align}
		excepting the first term on the right-hand side of \eqref{nonlinear-1}, one can check that other terms can be easily controlled by $C \delta \mathcal{D}^2(t)$.
		For the  first term on the right-hand side of \eqref{nonlinear-1},
		integrating by parts with the boundary condition $(u\cdot n)|_{\partial\Omega}=0$,  we obtain
		\begin{align*}
			&\int_{\Omega}\tilde{\rho}^{-1}u\cdot\nabla^2q_t \cdot \nabla (h'(\tilde{\rho}) q_t)\\[2mm]
			& = \int_{\Omega}\tilde{\rho}^{-1}h'(\tilde{\rho})u\cdot\nabla^2q_t \cdot \nabla q_t+
			\int_{\Omega}\tilde{\rho}^{-1}u\cdot\nabla^2q_t \cdot \nabla h'(\tilde{\rho}) q_t\\[2mm]
			& =-\frac{1}{2}\int_{\Omega}\mbox{div}\left(\tilde{\rho}^{-1}h'(\tilde{\rho})u\right)|\nabla q_t|^2
			-\int_{\Omega}\pl_{i}\left(\tilde{\rho}^{-1}u_i\pl_j h'(\tilde{\rho}) q_t\right)\pl_{j} q_t
			\\[2mm]
			&\leq \delta \|\nabla q_t\|_{L^2}^2.
		\end{align*}
		Noting $\|\nabla q_t\|_{L^2}^2$ is included  in $\mathcal{D}(t)$, so
		\begin{align}\label{step5-3}
			\int_{\Omega}\tilde{\rho}^{-1}\nabla f^0_{t}\cdot \nabla(h'(\tilde{\rho})q_t)\leq C\delta\mathcal{D}^2(t).
		\end{align}
		For the other nonlinear  term in \eqref{step6}, we apply the Cauchy's inequality directly to get
		\begin{align}\label{step5-4}
			\int_{\Omega} f_t\cdot \nabla\mbox{div}u_t&\leq \epsilon\|\nabla\mbox{div}u_t\|_{L^2}^2+ C\|f_t\|_{L^2}^2 \leq\epsilon\|\nabla\mbox{div}u_t\|_{L^2}^2+ C\delta\mathcal{D}^2(t).
		\end{align}
		Finally, putting (\ref{step5-1})-(\ref{step5-2}), (\ref{step5-3})  and (\ref{step5-4}) into (\ref{step6}) and choosing $\epsilon$ small, we conclude the desired estimate (\ref{result6}).
	\end{proof}

	\begin{lem}
		Under the assumptions in Proposition \ref{prop} and for any $\varepsilon>0$, it holds that
		\begin{align}\label{result7}
			&\frac{1}{2}\frac{d}{dt}\int_{\Omega}|\mbox{curl} u_t|^2 - \frac{1}{2}\frac{d}{dt} \int_{\pl \Omega}u_t(S(n)-\alpha I)\cdot u_t
			+c\|\mbox{curl}^2u_t\|_{L^2}^2\nonumber\\[2mm]
			&\leq \varepsilon(\|\nabla^2u_t\|_{L^2}^2+\|q_t\|^2_{H^1})+ C\|\nabla u_t\|_{L^2}^2 +C\delta\mathcal{D}^2(t),
		\end{align}
		where $c>0$ is a constant depending on $\mu$ and $\lambda$ and positive constant $C$ independent of $t$.
	\end{lem}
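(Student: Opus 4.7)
The plan is to differentiate the reformulated momentum equation \eqref{equation-u} in $t$, producing
\begin{equation*}
u_{tt}+\nabla (h'(\tilde{\rho})q_t)+\mu\tilde{\rho}^{-1}\mbox{curl}^2 u_t-(2\mu+\lambda)\tilde{\rho}^{-1}\nabla \mbox{div}u_t-\nabla\phi_t=f_t,
\end{equation*}
and then take the $L^2$-inner product with $\mbox{curl}^2 u_t$ and integrate over $\Omega$.  The boundary conditions pass to $u_t$ unchanged: $(u_t\cdot n)|_{\partial\Omega}=0$ and $(\mbox{curl}u_t\times n)|_{\partial\Omega}=(2S(n)-\alpha I)u_t$.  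The coercivity of the Lam\'e term $\mu\int_\Omega\tilde{\rho}^{-1}|\mbox{curl}^2 u_t|^2$, combined with the uniform lower bound on $\tilde{\rho}^{-1}$ from Lemma \ref{lem-rho}, yields the $c\|\mbox{curl}^2 u_t\|_{L^2}^2$ term on the left-hand side.

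The critical step is the integration by parts on $\int_\Omega u_{tt}\cdot\mbox{curl}^2 u_t$, which gives
\begin{equation*}
\int_\Omega u_{tt}\cdot\mbox{curl}^2 u_t=\tfrac{1}{2}\tfrac{d}{dt}\|\mbox{curl} u_t\|_{L^2}^2 -\int_{\partial\Omega}(\mbox{curl}u_t\times n)\cdot u_{tt}.
\end{equation*}
Substituting the boundary condition and exploiting symmetry of $(2S(n)-\alpha I)$ together with its time-independence, the boundary integral becomes a total $t$-derivative, namely $-\tfrac{1}{2}\tfrac{d}{dt}\int_{\partial\Omega}(2S(n)-\alpha I)u_t\cdot u_t$, which we transport to the left-hand side, producing the displayed boundary quadratic form.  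This manoeuvre is essential because $u_{tt}$ admits no useful trace estimate inside $\mathcal{E}(t)$ or $\mathcal{D}(t)$.

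The remaining terms are handled in the same spirit as Lemma \ref{lem-w}.  For $\int_\Omega\nabla(h'(\tilde{\rho})q_t)\cdot\mbox{curl}^2 u_t$ and $\int_\Omega\nabla\phi_t\cdot\mbox{curl}^2 u_t$, integration by parts (using $\mbox{curl}\,\nabla=0$) reduces each one to a single boundary integral of the form $\int_{\partial\Omega}(2S(n)-\alpha I)u_t\cdot\nabla(\cdot)$; applying Proposition \ref{lem-boundary-key} with $v=(2S(n)-\alpha I)u_t$ (whose normal component vanishes on $\partial B_R$ because $S(n)$ preserves the tangent space of a sphere) and with $f=h'(\tilde{\rho})q_t$ or $\phi_t$, and combining with Lemma \ref{lem-neu} to bound $\|\nabla^2\phi_t\|_{L^2}\leq C\|q_t\|_{L^2}$, gives contributions of size $\varepsilon\|q_t\|_{H^1}^2+C\|\nabla u_t\|_{L^2}^2$.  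The $\tilde{\rho}^{-1}\nabla\mbox{div}u_t$ term, upon integration by parts, yields a bulk commutator $\int_\Omega\nabla\tilde{\rho}^{-1}\times\nabla\mbox{div}u_t\cdot\mbox{curl}u_t$ (controlled via $\|\nabla\tilde{\rho}\|_{L^\infty}$ from Lemma \ref{lem-rho}) plus a boundary integral again estimated by Proposition \ref{lem-boundary-key}, both absorbed into $\varepsilon\|\nabla^2 u_t\|_{L^2}^2+C\|\nabla u_t\|_{L^2}^2$.  Finally, the nonlinear right-hand side is bounded via Cauchy--Schwarz as $\int_\Omega f_t\cdot\mbox{curl}^2 u_t\leq\varepsilon\|\mbox{curl}^2 u_t\|_{L^2}^2+C\|f_t\|_{L^2}^2$, and a term-by-term inspection of \eqref{eq-f} combined with the smallness assumption \eqref{small} gives $\|f_t\|_{L^2}^2\leq C\delta\mathcal{D}^2(t)$.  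Summing all contributions and choosing $\varepsilon$ small enough to absorb the $\|\mbox{curl}^2 u_t\|_{L^2}^2$ piece into the coercive term yields \eqref{result7}.

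The principal obstacle is exactly the boundary trace of $u_{tt}$ produced by integration by parts against $\mbox{curl}^2 u_t$: no norm in $\mathcal{E}(t)$ or $\mathcal{D}(t)$ controls it, so the only option is the time-derivative-reorganization above, after which everything else is standard.  A secondary technical point is ensuring that $(2S(n)-\alpha I)u_t$ is genuinely tangential on $\partial B_R$ so that Proposition \ref{lem-boundary-key} applies; this follows from Remark \ref{rem1}, where $n$ is an eigenvector of $S(n)$ with eigenvalue $0$.
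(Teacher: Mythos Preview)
Your approach is essentially identical to the paper's: differentiate \eqref{equation-u} in $t$, test against $\mbox{curl}^2 u_t$, reorganize the $u_{tt}$ boundary contribution as a total time derivative of a symmetric quadratic form, and handle the remaining boundary integrals via Proposition~\ref{lem-boundary-key}. One small correction: for the $\nabla\phi_t$ boundary term, Proposition~\ref{lem-boundary-key} with $f=\phi_t$ yields only $C\|\nabla u_t\|_{L^2}\|\nabla\phi_t\|_{L^2}$, and $\|\nabla\phi_t\|_{L^2}$ is not controlled by $\|q_t\|_{H^1}$ on the exterior domain; the paper instead uses the trace estimate \eqref{est-boundary} on both factors to obtain $|u_t|_{L^2(\partial\Omega)}|\nabla\phi_t|_{L^2(\partial\Omega)}\leq C\|\nabla u_t\|_{L^2}\|\nabla^2\phi_t\|_{L^2}$ and then invokes Lemma~\ref{lem-neu}, which gives the bound you state.
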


	\begin{proof}
		Computing the following integral
		\begin{equation*}
			\int_{\Omega}
			\partial_t(\ref{equation-u})\cdot \mbox{curl}^2u_t,
		\end{equation*}
		we get
		\begin{align}\label{step7}
			&\int_{\Omega} u_{tt}\cdot\mbox{curl}^2u_t+\mu\int_{\Omega}\tilde{\rho}^{-1}|\mbox{curl}^2u_t|^2
			=-\int_{\Omega}\nabla (h'(\tilde{\rho})q_t)\cdot\mbox{curl}^2u_t\nonumber\\[2mm]
			&+(2\mu+\lambda)\int_{\Omega}\tilde{\rho}^{-1}\nabla \mbox{div}u_t\cdot\mbox{curl}^2u_t+\int_{\Omega}\nabla\phi_t\cdot\mbox{curl}^2u_t+\int_{\Omega} f_t\cdot\mbox{curl}^2u_t.
		\end{align}
		Integrating by parts for both $t$- and $x$-variables,  and using  the boundary condition $(\mbox{curl}u_{t}\times n)|_{\partial\Omega}=(S(n)-\alpha I)u_{t}$, the first term on the left-hand side of \eqref{step7} becomes
		\begin{align*}
			\int_{\Omega} u_{tt}\cdot\mbox{curl}^2u_t&=\int_{\Omega}\mbox{curl}u_{tt}\cdot\mbox{curl}u_t
			-\int_{\partial\Omega}\mbox{curl}u_t\times n\cdot u_{tt}\nonumber\\[2mm]
			&=\frac{1}{2}\frac{d}{dt}\int_{\Omega}|\mbox{curl}u_t|^2
			-\int_{\partial\Omega}u_t (S(n)-\alpha I) \cdot u_{tt} \nonumber\\[2mm]
			&= \frac{1}{2}\frac{d}{dt}\int_{\Omega}|\mbox{curl}u_t|^2
			-\frac{1}{2}\frac{d}{dt} \int_{\pl \Omega}u_t (S(n)-\alpha I) \cdot u_t.
		\end{align*}
		For the first term on the right-hand side of \eqref{step7}, we integrate by parts with the boundary condition $(\mbox{curl}u_{t}\times n)|_{\partial\Omega}=(S(n)-\alpha I)u_{t}$ to deduce that,
		\begin{align*}
			-\int_{\Omega}\nabla (h'(\tilde{\rho})q_t)\cdot\mbox{curl}^2u_t&=\int_{\partial\Omega}\nabla (h'(\tilde{\rho})q_t)\cdot(\mbox{curl}u_t\times n)\nonumber\\[2mm]
			&=\int_{\partial\Omega}u_t(S(n)-\alpha I)\cdot\nabla (h'(\tilde{\rho})q_t) \nonumber\\[2mm]
			&\leq C\|\nabla u_t\|_{L^2} \|\nabla (h'(\tilde{\rho})q_t)\|_{L^2} \leq \varepsilon \|\nabla (h'(\tilde{\rho})q_t)\|_{L^2}^2+C \|\nabla u_t\|_{L^2}^2,
		\end{align*}
		where we used Proposition \ref{lem-boundary-key} with choosing $v=(S(n)-\alpha I)u_t$ and $f=h'(\tilde{\rho})q_t$ to estimate the boundary term. Similarly, \begin{align*}
			\int_{\Omega}\tilde{\rho}^{-1}\nabla \mbox{div}u_t\cdot\mbox{curl}^2u_t
			&=\int_{\Omega}\nabla\tilde{\rho}^{-1}\times\nabla \mbox{div}u_t\cdot\mbox{curl}u_t
			-\int_{\partial\Omega}\tilde{\rho}^{-1}\nabla \mbox{div}u_t\cdot(\mbox{curl}u_t\times n)\nonumber\\[2mm]
			&=\int_{\Omega}\nabla\tilde{\rho}^{-1}\times\nabla \mbox{div}u_t\cdot\mbox{curl}u_t
			-\int_{\partial\Omega}\tilde{\rho}^{-1}u_t(S(n)-\alpha I)\cdot\nabla \mbox{div}u_t\nonumber\\[2mm]
			&\leq \varepsilon \|\nabla\mbox{div}u_t\|_{L^2}^2+C\|\nabla u_t\|_{L^2}^2,
		\end{align*}
		where we used Proposition  \ref{lem-boundary-key} again.

		Next, after integrating by parts and using Lemma \ref{lem-ell} to estimate the boundary integration, we have
		\begin{align*}
			\int_{\Omega}\nabla \phi_t\cdot\mbox{curl}^2u_t&=-\int_{\partial\Omega}\nabla \phi_t\cdot(\mbox{curl}u_t\times n)\nonumber\\[2mm]
			&=-\int_{\partial\Omega}u_t(2S(n)-\alpha I)\cdot\nabla \phi_t\nonumber\\[2mm]
			&\leq C\|\nabla u_t\|_{L^2}\|\nabla^2\phi_t\|_{L^2}\nonumber\\[2mm]
			&\leq \varepsilon \|q_t\|_{L^2}^2+C \|\nabla u_t\|_{L^2}^2,
		\end{align*}
		where we used $\|\nabla^2\phi_t\|_{L^2}\leq C\|q_t\|_{L^2}$ as in Lemma \ref{lem-neu}.

		Lastly, the nonlinear term in \eqref{step7} can be controlled by
		\begin{align*}
			\int_{\Omega}f_t\cdot\mbox{curl}^2u_t&\leq \epsilon \|\mbox{curl}^2u_t\|_{L^2}^2+C\|f_t\|_{L^2}^2 \nonumber\\[2mm]
			&\leq \epsilon \|\mbox{curl}^2u_t\|_{L^2}^2+C\delta\left(\|\nabla u_t\|_{L^2}^2+\|\nabla^2 u_t\|_{L^2}^2+\|\nabla q_t\|_{L^2}^2\right).
		\end{align*}
		Combining the above estimates into (\ref{step7}), we deduce the proof of (\ref{result7}).
	\end{proof}

	In order to close the assumption \eqref{small}, we shall provide some estimates for $\nabla^2q$.

	\begin{lem}\label{result2q}
		Suppose that the conditions in Proposition \ref{prop} hold and for any $\varepsilon>0$, we have
		\begin{align*}
			&\frac{d}{dt}\int_{\Omega}|\nabla^2 q|^2+\frac{d}{dt}\int_{\Omega}\tilde{\rho}^{-1}q\nabla^2q\cdot\left(h''(\tilde{\rho})\nabla^2\tilde{\rho}
			 +h'''(\tilde{\rho})\nabla\tilde{\rho}\nabla\tilde{\rho}\right)+2\frac{d}{dt}\int_{\Omega}\tilde{\rho}^{-1}h''\nabla\tilde{\rho}\cdot\nabla^2q\cdot\nabla q
			+c\|\nabla^2\mbox{div}u\|_{L^2}^2\nonumber\\[2mm]
			&\leq \varepsilon\|\nabla^2 q\|^2_{L^2}
			+ C\left( \|\nabla u_t\|_{L^2}^2+\| q_t\|_{H^1}^2+\|\nabla(\mbox{curl}^2u)\|_{L^2}^2+\|\nabla u\|_{H^1}^2+ \|\nabla q\|^2_{H^1} +\|\nabla^2 \phi\|_{L^2}^2\right)+C\delta\mathcal{D}^2(t),
		\end{align*}
		where $c$ and $C$  are positive constants independent of $t$.
	\end{lem}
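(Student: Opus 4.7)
The plan mirrors Lemma \ref{nabladiv} shifted up by one spatial derivative. I would test the spatially--differentiated momentum equation $\partial_i(\ref{equation-u})$ against $\partial_i\nabla\mbox{div}u$, summed over $i$, and then use the continuity equation $(\ref{equation-1})_1$ algebraically to convert the resulting pressure--divergence cross term into the desired time--derivative structure. The viscous contribution $-(2\mu+\lambda)\int\partial_i(\tilde\rho^{-1}\nabla\mbox{div}u)\cdot\partial_i\nabla\mbox{div}u$, after separating the $\tilde\rho^{-1}$ factor, yields the good dissipation $(2\mu+\lambda)\int\tilde\rho^{-1}|\nabla^2\mbox{div}u|^2$ on the left--hand side, which (since $\tilde\rho^{-1}$ is bounded below by Lemma \ref{lem-rho}) gives the desired $c\|\nabla^2\mbox{div}u\|^2$.

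For the pressure--divergence coupling $\int\nabla^2(h'(\tilde\rho)q):\nabla^2\mbox{div}u$, I would invoke the continuity equation in the form $\tilde\rho\,\mbox{div}u=-q_t-\nabla\tilde\rho\cdot u+f^0$ to replace $\nabla^2\mbox{div}u$ by $-\tilde\rho^{-1}\nabla^2q_t$ plus lower--order terms. Expanding
$$\nabla^2(h'(\tilde\rho)q)=h'(\tilde\rho)\nabla^2q+2h''(\tilde\rho)\nabla\tilde\rho\otimes\nabla q+q\bigl(h''(\tilde\rho)\nabla^2\tilde\rho+h'''(\tilde\rho)\nabla\tilde\rho\otimes\nabla\tilde\rho\bigr)$$
by the Leibniz rule and applying the identity $A\,\partial_tB=\partial_t(AB)-\partial_tA\cdot B$ in each piece (with $A$ the $\tilde\rho$--coefficient and $B$ a derivative of $q$), one recovers precisely the three $\frac{d}{dt}(\cdots)$ quantities displayed on the LHS of the claim; the accompanying spatial--remainder $\int(\cdots)\nabla q_t\cdot\nabla^2q$ and $\int(\cdots)q_t\cdot\nabla^2q$ terms are absorbed via Cauchy's inequality into $\varepsilon\|\nabla^2q\|^2+C\|q_t\|_{H^1}^2$, while the lower--order parts of $\nabla^2\mbox{div}u$ (from $\nabla\tilde\rho\cdot u$ and from Leibniz terms where derivatives land on $\tilde\rho^{-1}$) give contributions bounded by $\|\nabla u\|_{H^1}^2+\|\nabla q\|_{H^1}^2$.

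The remaining pieces of the pairing follow the earlier lemmas. The $\int\partial_iu_t\cdot\partial_i\nabla\mbox{div}u$ contribution, after integration by parts with $(u_t\cdot n)|_{\partial\Omega}=0$, is bounded by $\varepsilon\|\nabla^2\mbox{div}u\|^2+C\|\nabla u_t\|^2$ plus a boundary integral controlled by Proposition \ref{lem-boundary-key}. The $\mu\int\partial_i(\tilde\rho^{-1}\mbox{curl}^2u)\cdot\partial_i\nabla\mbox{div}u$ term is dominated by $\|\nabla\mbox{curl}^2u\|^2$ after one integration by parts, matching the corresponding RHS term. The $\nabla^2\phi$ piece is handled via $\Delta\phi=q$ and Lemma \ref{lem-neu} to give $\|\nabla^2\phi\|^2$. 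The nonlinear $\int\nabla f\cdot\nabla^2\mbox{div}u$, estimated through the explicit formula \eqref{eq-f} with Sobolev embeddings and the smallness hypothesis \eqref{small}, yields $\varepsilon\|\nabla^2\mbox{div}u\|^2+C\delta\mathcal{D}^2(t)$. Boundary contributions from the various integrations by parts are controlled as in Lemmas \ref{lem-div}--\ref{nabladiv} using the Navier--slip condition $\mbox{curl}u\times n=(2S(n)-\alpha I)u$, Proposition \ref{lem-boundary-key}, and the trace estimate \eqref{est-boundary}.

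The principal obstacle is that the cross term $\int\nabla^2(h'(\tilde\rho)q):\nabla^2\mbox{div}u$ appears at exactly the order $\nabla^2q\cdot\nabla^2\mbox{div}u$, so a direct Cauchy--Schwarz would require absorbing $\|\nabla^2q\|^2$ into a dissipation that we do not possess. The key device is the \emph{algebraic} use of the continuity equation to turn this coupling into a time derivative of $\|\nabla^2q\|^2$ plus genuinely lower--order remainders; this is exactly the reason the three correction $\frac{d}{dt}$ terms appear explicitly on the LHS of the statement. A secondary care is required to ensure that no $\nabla^3q$ or $\nabla^2q_t$ ever remains after simplification, since neither is in $\mathcal{E}(t)$ or $\mathcal{D}(t)$; the $A\partial_tB=\partial_t(AB)-\partial_tA\cdot B$ trick systematically converts each such potentially dangerous term into either a bonafide time derivative or a quantity controlled by $\mathcal{D}(t)$.
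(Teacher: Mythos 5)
Your proposal follows essentially the same route as the paper: what you describe as an ``algebraic substitution'' of the continuity equation into $\nabla^2\mbox{div}u$ is realized there by adding $\int_{\Omega}\tilde{\rho}^{-1}\partial_{ij}(\ref{equation-1})_1\cdot\partial_{ij}(h'(\tilde{\rho})q)$ to the tested momentum equation $-\int_{\Omega}\nabla_i(\ref{equation-1})_2^j\cdot\nabla_{ij}\mbox{div}u$, which is the identical computation, and the three $\frac{d}{dt}$ corrections arise from $\int_{\Omega}\tilde{\rho}^{-1}\partial_{ij}q_t\,\partial_{ij}(h'(\tilde{\rho})q)$ exactly as you say. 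One imprecision worth fixing: the dangerous $\nabla^3q$ does not disappear via your $A\,\partial_tB=\partial_t(AB)-\partial_tA\cdot B$ device --- it sits in the nonlinear term $\int_{\Omega}\tilde{\rho}^{-1}\,u\cdot\nabla^3q\cdot\nabla^{2}(h'(\tilde{\rho})q)$ coming from $\nabla^2f^0$, and is removed by a \emph{spatial} integration by parts exploiting $u\cdot n|_{\partial\Omega}=0$ and the divergence structure, yielding $-\tfrac{1}{2}\int_{\Omega}\mbox{div}\bigl(\tilde{\rho}^{-1}h'(\tilde{\rho})u\bigr)|\nabla^2q|^2$ plus lower-order pieces, all of size $C\delta\mathcal{D}^2(t)$; otherwise your argument matches the paper's.
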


	\begin{proof}
		Computing the following integral
$$\int_{\Omega}\tilde{\rho}^{-1}\partial_{ij}(\ref{equation-1})_1 \cdot\partial_{ij}(
		h'(\tilde{\rho})q)-\nabla_{i}(\ref{equation-1})_2^{j}\cdot\nabla_{ij}\mbox{div}u,$$
		we have
		\begin{align}\label{2q}
			&(2\mu+\lambda)\int_{\Omega}\tilde{\rho}^{-1}|\nabla^{2}\mbox{div}u|^2
			+\int_{\Omega}\tilde{\rho}^{-1}\partial_{ij}q_t\cdot\partial_{ij}(h'(\tilde{\rho})q)\nonumber\\[2mm]
			&+\int_{\Omega}\tilde{\rho}^{-1}\partial_{ij}\mbox{div}(\tilde{\rho} u)\cdot\partial_{ij}(h'(\tilde{\rho})q)
			-\int_{\Omega}\partial_{ij}(h'(\tilde{\rho})q)\cdot\partial_{ij}\mbox{div}u\nonumber\\[2mm]
			&=\int_{\Omega}\nabla u_t\cdot\nabla^2\mbox{div}u+\mu\int_{\Omega}\tilde{\rho}^{-1}\nabla(\mbox{curl}^2u)\cdot\nabla^2\mbox{div}u\nonumber\\[2mm]
			 &-\int_{\Omega}\nabla\tilde{\rho}^{-1}\cdot(-\mu\mbox{curl}^2u+(2\mu+\lambda)\nabla\mbox{div}u)\cdot\nabla^2\mbox{div}u
			-\int_{\Omega}\nabla^2 \phi\cdot\nabla^2\mbox{div}u\nonumber\\[2mm]
			&+\int_{\Omega}\tilde{\rho}^{-1}\nabla^2f^{0}\cdot \nabla^{2}(h'(\tilde{\rho})q)-\int_{\Omega}\nabla f\cdot\nabla^2\mbox{div}u.
		\end{align}

		Integrating by part for $t$-variable, we deduce through direct calculation
		\begin{align*}
			&\int_{\Omega}\tilde{\rho}^{-1}\partial_{ij}q_t\cdot\partial_{ij}(h'(\tilde{\rho})q)\nonumber\\[2mm]
			&=\int_{\Omega}\tilde{\rho}^{-1}h'(\tilde{\rho})\partial_{ij}q_t\cdot\partial_{ij}q
			 +2\int_{\Omega}\tilde{\rho}^{-1}h''(\tilde{\rho})\partial_{ij}q_t\cdot\partial_{i}q\partial_{j}\tilde{\rho}
			+\int_{\Omega}\tilde{\rho}^{-1}q\partial_{ij}q_t\cdot\left(h''(\tilde{\rho})\partial_{ij}\tilde{\rho}
			+h'''(\tilde{\rho})\partial_i\tilde{\rho}\partial_j\tilde{\rho}\right)\nonumber\\[2mm]
			&=\frac{1}{2}\frac{d}{dt}\int_{\Omega}\tilde{\rho}^{-1}h'(\tilde{\rho})|\nabla^2q|^2
			+2\frac{d}{dt}\int_{\Omega}\tilde{\rho}^{-1}h''\nabla\tilde{\rho}\cdot\nabla^2q\cdot\nabla q
			+\frac{d}{dt}\int_{\Omega}\tilde{\rho}^{-1}q\nabla^2q\cdot\left(h''(\tilde{\rho})\nabla^2\tilde{\rho}
			+h'''(\tilde{\rho})\nabla\tilde{\rho}\nabla\tilde{\rho}\right)\nonumber\\[2mm]
			&-\int_{\Omega}\tilde{\rho}^{-1}h''\nabla\tilde{\rho}\cdot\nabla^2q\cdot\nabla q_t
			-\int_{\Omega}\tilde{\rho}^{-1}q_t\cdot\nabla^2 q\cdot\left(h''(\tilde{\rho})\nabla^2\tilde{\rho}
			+h'''(\tilde{\rho})\nabla\tilde{\rho}\nabla\tilde{\rho}\right)\nonumber\\[2mm]
			&\geq \frac{1}{2}\frac{d}{dt}\int_{\Omega}\tilde{\rho}^{-1}h'(\tilde{\rho})|\nabla^2q|^2
			+2\frac{d}{dt}\int_{\Omega}\tilde{\rho}^{-1}h''\nabla\tilde{\rho}\cdot\nabla^2q\cdot\nabla q
			+\frac{d}{dt}\int_{\Omega}\tilde{\rho}^{-1}q\nabla^2q\cdot\left(h''(\tilde{\rho})\nabla^2\tilde{\rho}
			+h'''(\tilde{\rho})\nabla\tilde{\rho}\nabla\tilde{\rho}\right)\nonumber\\[2mm]
			&-\varepsilon\|\nabla^2 q\|_{L^2}^2-C(\|\nabla q_t\|_{L^2}^2+
			\|q_t\|_{L^2}^2).
		\end{align*}

		Combining the next two terms, we get the following through direct calculation
		\begin{align*}
			&\int_{\Omega}\tilde{\rho}^{-1}\partial_{ij}\mbox{div}(\tilde{\rho} u)\cdot\partial_{ij}(h'(\tilde{\rho})q)
			-\int_{\Omega}\partial_{ij}(h'(\tilde{\rho})q)\cdot\partial_{ij}\mbox{div}u\nonumber\\[2mm]
			&=\int_{\Omega}\tilde{\rho}^{-1}\left(\nabla^2\tilde{\rho} \mbox{div}u+2\nabla\tilde{\rho}\cdot\nabla\mbox{div}u+\nabla^2u\cdot\nabla\tilde{\rho}
			+u\cdot\nabla^{3}\tilde{\rho}+2\nabla u\cdot\nabla^2\tilde{\rho}\right)\cdot \nabla^{2}\mbox{div}u\nonumber\\[2mm]
			&\geq \epsilon \|\nabla^{2}\mbox{div}u\|^2-C(\|\nabla\tilde{\rho}\|^2_{L^{\infty}}+\|\nabla^2\tilde{\rho}\|^2_{L^{3}}+\|\nabla^3\tilde{\rho}\|^2_{L^{3}})
			\left(\|u\|_{L^6}^2+\|\nabla u\|_{L^6}^2+\|\nabla^2u\|_{L^2}^2\right)\nonumber\\[2mm]
			&\geq \epsilon \|\nabla^{2}\mbox{div}u\|^2-C\|\nabla u\|_{H^1}^2,
		\end{align*}
		where we have used the estimate for $\tilde{\rho}$ \eqref{estimate-rho} in Lemma \ref{lem-rho}.
		
		The first four terms on the right-hand side of (\ref{2q}) can be estimated by using Cauchy's inequalities directly
		\begin{align*}
			&\int_{\Omega}\nabla u_t\cdot\nabla^2\mbox{div}u+\mu\int_{\Omega}\tilde{\rho}^{-1}\nabla(\mbox{curl}^2u)\cdot\nabla^2\mbox{div}u\nonumber\\[2mm]
			 &-\int_{\Omega}\nabla\tilde{\rho}^{-1}\cdot(-\mu\mbox{curl}^2u+(2\mu+\lambda)\nabla\mbox{div}u)\cdot\nabla^2\mbox{div}u
			-\int_{\Omega}\nabla^2 \phi\cdot\nabla^2\mbox{div}u\nonumber\\[2mm]
			&\leq \epsilon \|\nabla^2\mbox{div}u\|_{L^2}^2+C\left( \|\nabla u_t\|_{L^2}^2+\|\nabla(\mbox{curl}^2u)\|_{L^2}^2+\|\nabla^2u\|_{L^2}^2+\|\nabla^2 \phi\|_{L^2}^2\right).
		\end{align*}

		The nonlinear terms on the right-hand side of (\ref{2q}) can be estimated as
		\begin{align}\label{2q-1}
			&\int_{\Omega}\tilde{\rho}^{-1}\nabla^2f^{0}\cdot \nabla^{2}(h'(\tilde{\rho})q)=\int_{\Omega}\tilde{\rho}^{-1}\nabla^2\mbox{div}(qu)\cdot \nabla^{2}(h'(\tilde{\rho})q)\nonumber\\[2mm]
			&= \int_{\Omega}\tilde{\rho}^{-1}u\cdot\nabla^3q\cdot \nabla^{2}(h'(\tilde{\rho})q)\nonumber\\[2mm]
			&+\int_{\Omega}\tilde{\rho}^{-1}\left(\nabla^2q \mbox{div}u+2\nabla q\cdot\nabla\mbox{div}u+q\nabla^2\mbox{div}u
			+\nabla^2u\cdot\nabla q+2\nabla u\cdot\nabla^2q\right) \cdot \nabla^{2}(h'(\tilde{\rho})q),
		\end{align}
		except for the first term on the right-hand side, the other terms can be easily controlled by $C\delta\left(\|\nabla q\|_{H^1}^2+\|\nabla^3 u\|_{L^2}^2\right)$.

		For the first term on the right-hand side of (\ref{2q-1}), by using integration by parts with the boundary condition $(u\cdot n)|_{\partial\Omega}=0$, we obtain
		\begin{align*}
			&\int_{\Omega}\tilde{\rho}^{-1}u\cdot\nabla^{2}(h'(\tilde{\rho})q)\cdot\nabla^3q\nonumber\\[2mm]
			&=-\frac{1}{2}\int_{\Omega}\mbox{div}\Big(\tilde{ \rho}^{-1}h'(\tilde{ \rho})u\Big) |\nabla^2q|^2
			-\int_{\Omega}\mbox{div}\Big(\tilde{\rho}^{-1}u\cdot(h''\nabla\tilde{\rho}\cdot\nabla q+h''\nabla^2\tilde{\rho} q+h'''\nabla\tilde{\rho } \cdot\nabla\tilde{\rho} q)\Big)  \cdot\nabla^2q \nonumber\\[2mm]
			&\leq C\delta \left(\|\nabla^2q\|_{L^2}^2+\|\nabla q\|_{L^2}^2\right).
		\end{align*}
		Similarly, we have
		\begin{align*}
			-\int_{\Omega} \nabla f \cdot \nabla^2 \mbox{div} u &\leq \epsilon \|\nabla^2 \mbox{div} u\|^2_{L^2}
+ C\| \nabla f \|^2_{L^2} \nonumber\\[2mm]
			&\leq \epsilon \|\nabla^2 \mbox{div} u\|^2_{L^2} + C \delta \mathcal{D}^2(t).
		\end{align*}
		Putting the above estimates into (\ref{2q}), we end the proof of Lemma \ref{result2q}.
	\end{proof}
	
	At last, we need to estimate $\int_0^t\|\nabla^2q\|^2_{L^2}ds$.
	\begin{lem}\label{lem-2q}
		Suppose that the conditions in Proposition \ref{prop} hold, there is a positive constant $C$ independent of $t$, such that
		\begin{align}\label{result11}
			\|\nabla^2q\|^2_{L^2}\leq C\left(\|q\|_{H^1}^2+\|\nabla u_t\|_{L^2}^2+\|\nabla^2u\|^2_{H^1}+\|\nabla^2\phi\|_{L^2}^2\right)+C\delta \left(\|\nabla^2u\|^2_{H^1}+\|\nabla q\|^2_{H^1}\right).
		\end{align}
	\end{lem}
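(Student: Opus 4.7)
The strategy is to view the momentum equation $(\ref{equation-1})_2$ as providing an algebraic expression for $\nabla(h'(\tilde{\rho})q)$, then take one more spatial derivative and invert the leading coefficient $h'(\tilde{\rho})$, which is bounded below by \eqref{hb}.

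\textbf{Step 1.} Rearranging $(\ref{equation-1})_2$, I write
\begin{equation*}
\nabla(h'(\tilde{\rho})q)=-u_{t}+\tilde{\rho}^{-1}\bigl(\mu\Delta u+(\mu+\lambda)\nabla\mbox{div}u\bigr)+\nabla\phi+f.
\end{equation*}
Differentiating once more in $x$ and taking the $L^2$ norm yields
\begin{equation*}
\|\nabla^{2}(h'(\tilde{\rho})q)\|_{L^2}^{2}\leq C\bigl(\|\nabla u_{t}\|_{L^2}^{2}+\|\nabla^{2}u\|_{H^{1}}^{2}+\|\nabla^{2}\phi\|_{L^2}^{2}+\|\nabla f\|_{L^2}^{2}\bigr),
\end{equation*}
where the derivative falling on $\tilde{\rho}^{-1}$ is controlled by $\|\nabla\tilde{\rho}\|_{L^\infty}\|\nabla^{2}u\|_{L^2}$, which is absorbed via Lemma \ref{lem-rho}.

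\textbf{Step 2.} Next I expand
\begin{equation*}
\nabla^{2}(h'(\tilde{\rho})q)=h'(\tilde{\rho})\nabla^{2}q+2h''(\tilde{\rho})\nabla\tilde{\rho}\otimes\nabla q+q\bigl(h''(\tilde{\rho})\nabla^{2}\tilde{\rho}+h'''(\tilde{\rho})\nabla\tilde{\rho}\otimes\nabla\tilde{\rho}\bigr),
\end{equation*}
solve for $h'(\tilde{\rho})\nabla^{2}q$, and use the lower bound $h'(\tilde{\rho})\geq C_{1}>0$ from \eqref{hb}, together with the boundedness of $h''$, $h'''$, $\nabla\tilde{\rho}$ and $\nabla^{2}\tilde{\rho}$ (from Lemma \ref{lem-rho} and Sobolev embedding), to obtain
\begin{equation*}
\|\nabla^{2}q\|_{L^2}^{2}\leq C\bigl(\|\nabla^{2}(h'(\tilde{\rho})q)\|_{L^2}^{2}+\|\nabla q\|_{L^2}^{2}+\|q\|_{L^2}^{2}\bigr).
\end{equation*}

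\textbf{Step 3.} Finally, the nonlinear term $\nabla f$ (see \eqref{eq-f}) is estimated using H\"older and Sobolev inequalities together with the a priori smallness assumption \eqref{small}: each term is a product of factors that include a small one (either $u$, $q$, or $\nabla u$ in $L^\infty$ or $L^3$, which is bounded by $C\delta$ under $\mathcal{E}(t)\leq\delta$) multiplied by a higher derivative already appearing in $\mathcal{D}(t)$. This gives
\begin{equation*}
\|\nabla f\|_{L^2}^{2}\leq C\delta\bigl(\|\nabla^{2}u\|_{H^{1}}^{2}+\|\nabla q\|_{H^{1}}^{2}\bigr).
\end{equation*}
Combining Steps 1--3 yields the claimed bound.

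The main technical point is not an obstacle but rather a bookkeeping matter in Step 3: one has to check that every term arising from $\nabla f$ has the structure (small quantity) $\times$ (controlled derivative), so that the $\|\nabla^{2}q\|_{L^2}^{2}$ term on the right does not need to be absorbed on the left. The cubic/higher structure of $f$ and the uniform bounds on $\tilde{\rho}$, $h^{(k)}(\tilde{\rho})$ make this work.
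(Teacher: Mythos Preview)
Your proof is correct and follows essentially the same approach as the paper. The paper phrases Step~1 as testing $\nabla(\ref{equation-1})_2$ against $\nabla^2(h'(\tilde{\rho})q)$ and then using Cauchy's inequality, which is equivalent to your direct norm estimate; Step~2 is stated in the paper as the inequality $\|\nabla^2 q\|_{L^2}\leq C\bigl(\|\nabla^2(h'(\tilde{\rho})q)\|_{L^2}+\|q\|_{H^1}\bigr)$, and the treatment of $\nabla f$ is identical.
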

	\begin{proof}
		Taking $\int_{\Omega}\nabla(\ref{equation-1})_2\cdot \nabla^2(h'(\tilde{\rho})q)$, we have
		\begin{align}\label{step11}
			\|\nabla^2(h'(\tilde{\rho})q)\|^2&=-\int_{\Omega}\nabla u_t\cdot \nabla^2(h'(\tilde{\rho})q)
			+\int_{\Omega}\nabla\Big(\tilde{\rho}^{-1}\left(\mu\Delta u+(\mu+\lambda)\nabla\mbox{div}u\right)\Big)\cdot \nabla^2(h'(\tilde{\rho})q)\nonumber\\[2mm]
			&+\int_{\Omega}\nabla^2\phi\cdot\nabla^2(h'(\tilde{\rho})q)-\int_{\Omega}\nabla f\cdot\nabla^2(h'(\tilde{\rho})q)\nonumber\\[2mm]
			&\equiv K_1-\int_{\Omega}\nabla f\cdot\nabla^2(h'(\tilde{\rho})q).
		\end{align}
		By using the Cauchy's inequality, one obtains
		\begin{align}\label{step11-1}
			K_1\leq \frac{1}{4}\|\nabla^2(h'(\tilde{\rho})q)\|_{L^2}^2+C\left(\|\nabla u_t\|_{L^2}^2+\|\nabla^2u\|^2_{H^1}+\|\nabla^2\phi\|_{L^2}^2\right),
		\end{align}
		and
		\begin{align}\label{step11-2}
			\int_{\Omega}\nabla f\cdot\nabla^2(h'(\tilde{\rho})q)\leq \frac{1}{4}\|\nabla^2(h'(\tilde{\rho})q)\|_{L^2}^2+  C\delta \left(\|\nabla^2u\|^2_{H^1}+\|\nabla q\|^2_{H^1}\right)
		\end{align}
		Putting \eqref{step11-1} and \eqref{step11-2} into \eqref{step11}, and using the facts  $\|\nabla^2q\|_{L^2}\leq C\left(\|\nabla^2(h'(\tilde{\rho})q\|_{L^2}+\|q\|_{H^1}\right)$,
		we obtain the desired estimate \eqref{result11}.
	\end{proof}

	We now combine Lemmas \ref{lem-basic}-\ref{lem-2q} to derive the a priori estimates for the solutions to (\ref{equation-1}).

	{\bf {Proof of  Proposition \ref{prop} }}Firstly, rewriting the equation $\eqref{equation-1}_2$ as
	\begin{equation*}
		-\mu\Delta u(\mu+\lambda)-\nabla \mbox{div}u=\tilde{\rho}\left(-u_{t}-\nabla (h'(\tilde{\rho})q)+
		\nabla\phi+f\right),
	\end{equation*}
	then applying the elliptic estimate (i.e. Lemma \ref{lem-elliptic-1}), we have
	\begin{align*}
		\|\nabla^3u\|_{L^2}\leq C(|\tilde{\rho}|_{L^\infty},|\nabla\tilde{\rho}|_{L^{\infty}})\left(\|u_t\|_{H^1}+\|\nabla (h'(\tilde{\rho}) q)\|_{H^1}+\|\nabla\phi\|_{H^1}+\|f\|_{H^1}\right),
	\end{align*}
	with all the terms on the right-hand side already having estimates in the above Lemmas.
	
	Using the first equation of (\ref{equation-1}), it is easy to see that
	\begin{align*}
		\|q_t\|_{L^2}&\leq \tilde{\rho}\|\nabla u\|_{L^2}+\|\nabla\tilde{\rho}\|_{L^{3}}\|u\|_{L^6}+\|q\|_{L^{\infty}}\|\nabla u\|_{L^2}+\|\nabla q\|_{L^3}\|u\|_{L^6}\nonumber\\[2mm]
		&\leq C\|\nabla u\|_{L^2}+C\delta\|\nabla u\|_{L^2}.
	\end{align*}
	Similarly,
	\begin{align*}
		\|q_{tt}\|_{L^2}&\leq \tilde{\rho}\|\nabla u_t\|_{L^2}+\|\nabla\tilde{\rho}\|_{L^{3}}\|u_t\|_{L^6}+\|q\|_{L^{\infty}}\|\nabla u_t\|_{L^2}\nonumber\\[2mm]
&+\|\nabla q\|_{L^3}\|u_t\|_{L^6}+\|\mbox{div}u\|_{L^3}\| q_t\|_{L^6}+\|u\|_{L^{\infty}}\|\nabla q_t\|_{L^2}\nonumber\\[2mm]
		&\leq C\|\nabla u_t\|_{L^2}+C\delta\left(\|\nabla u_t\|_{L^2}+\|\nabla q_t\|_{L^2}\right).
	\end{align*}
	Therefore, the estimates of $\|q_t\|_{L^2_t({L^2})}$ and $\|q_{tt}\|_{L^2_t({L^2})}$ are obtained.
	
	Next, we show estimates of $\|\nabla^2u\|_{L^2_t({L^2})}$ and $\|\nabla^3u\|_{L^2_t({L^2})}$.
	Recalling
	\begin{align*}
		\|\nabla^2 u\|_{L^2}\leq C\left(\|\nabla\mbox{div}u\|_{L^2}+\|\mbox{curl}^2u\|_{L^2}+\|\nabla u\|_{L^2}\right),
	\end{align*}
	which implies that $\|\nabla^2u\|_{L^2_t({L^2})}$ can be controlled by Lemma \ref{lem-basic}, Lemma \ref{lem-div} and Lemma \ref{lem-w}.
	Similarly,
	\begin{align}\label{2ut}
		\|\nabla^2 u_t\|_{L^2}\leq C\left(\|\nabla\mbox{div}u_t\|_{L^2}+\|\mbox{curl}^2u_t\|_{L^2}+\|\nabla u_t\|_{L^2}\right),
	\end{align}
	and
	\begin{align*}
		\|\nabla^3 u\|_{L^2}\leq C \left(\| \nabla^2 \mbox{div} u\|_{L^2}+\|\mbox{curl}^3u\|_{L^2}+\|\nabla^2 u\|_{L^2}+\|\nabla u\|_{L^2}\right),
	\end{align*}
	with all the terms on the right-hand side already have estimates.
	Then we integrate with respect to time $t$ and choose small enough $\varepsilon>0$
	\begin{align} \label{almost}
		\mathcal{E}^2(t) + c \int_{0}^{t} \mathcal{D}^2(s) \mathrm{d} s & \leq C\mathcal{E}^2(0) + C \delta \int_{0}^{t} \mathcal{D}^2(s) \mathrm{d} s \nonumber\\[2mm]
		&- c_1 \int_{\Omega} ( u \cdot \nabla (h'(\tilde{ \rho}) q) + \tilde{ \rho}^{-1} h''(\tilde{ \rho}) \nabla q_t \cdot \nabla \tilde{ \rho} q_t + \mbox{div} u_t \mbox{div} u \nonumber\\[2mm]
		&+ \tilde{ \rho}^{-1} q \nabla^2 q \cdot (h''(\tilde{ \rho}) \nabla^2 \tilde{ \rho} + h'''(\tilde{ \rho})\nabla \tilde{ \rho} \nabla \tilde{ \rho}) + \tilde{ \rho}^{-1} h''(\tilde{ \rho}) \nabla \tilde{ \rho} \cdot \nabla^2 q \cdot \nabla q ) ((t)-(0)) \nonumber\\[2mm]
		&+ \int_{\partial\Omega} ((S(n)- \alpha I)u_t \cdot u_t)((t)-(0)) \mathrm{d} \sigma,
	\end{align}
	where $c_1>0$ is a small constant that can be chosen later to make these terms controlled by the left-hand side of the (\ref{almost}).
	
	We apply the H$\ddot{o}$lder's inequalities and Cauchy's inequalities directly
	\begin{align*}
		&- c_1 \int_{\Omega} ( u \cdot \nabla (h'(\tilde{ \rho}) q) + \tilde{ \rho}^{-1} h''(\tilde{ \rho}) \nabla q_t \cdot \nabla \tilde{ \rho} q_t + \mbox{div} u_t \mbox{div} u \nonumber\\[2mm]
		&+ \tilde{ \rho}^{-1} q \nabla^2 q \cdot (h''(\tilde{ \rho}) \nabla^2 \tilde{ \rho} + h'''(\tilde{ \rho})\nabla \tilde{ \rho} \nabla \tilde{ \rho}) + \tilde{ \rho}^{-1} h''(\tilde{ \rho}) \nabla \tilde{ \rho} \cdot \nabla^2 q \cdot \nabla q ) ((t)-(0))\\[2mm]
		&\leq c_1 C( \| \nabla u (t) \|^2_{L^2} + \|q(t)\|^2_{H^2} + \|\nabla u_t(t)\|^2_{L^2} + \|q_t(t)\|^2_{H^1}) \nonumber\\[2mm]
		&+c_1 C( \| \nabla u (0)\|^2_{L^2} + \|q(0)\|^2_{H^2} + \|\nabla u_t(0)\|^2_{L^2} + \|q_t(0)\|^2_{H^1})\nonumber \\[2mm]
		&\leq c_1 C \mathcal{E}^2(t) + c_1 C \mathcal{E}^2(0).
	\end{align*}
	Noted that $-S(n)$ is positive semi-definite in our setting $\Omega\equiv\mathbb{R}^3\setminus B_R$  (see Remark \ref{rem1}), then the above boundary integrals $\int_{\partial\Omega}S(n)u_t\cdot u_t-\int_{\partial\Omega} \alpha^{+}|u_t|^2$ is good terms in the estimate since they are negative terms on the right-hand side. Then we shall use the trace theorem to estimate the boundary integral,
	\begin{align*}
		-\int_{\partial\Omega} \alpha^{-} |u_t|^2\leq |\alpha^{-}|_{L^{\infty}}|u_t|^2_{L^{2}(\partial\Omega)}\leq C^{*}|\alpha^{-}|_{L^{\infty}} R \|\nabla u_t\|^2 \leq C^{*}|\alpha^{-}|_{L^{\infty}} R \mathcal{E}^2(t),
	\end{align*}
	where we used Proposition \ref{prop-trace}.
	By choosing $c_1$ and $R$ small enough, we complete the proof of Proposition \ref{prop}.
	\qed

	\centerline{\bf Acknowledgements}
	This research is supported by National Natural Science
	Foundation of China (No. 12471198, No. 12431018). The authors would like to thank Professor
	Tao Luo  for his interests in this work.

\end{document}